\newtheorem{thm}{Theorem}[section]
\newtheorem{lem}[thm]{Lemma}
\newtheorem{prop}[thm]{Proposition}
\theoremstyle{definition}
\newtheorem{Def}[thm]{Definition}
\newtheorem{rem}[thm]{Remark}
\newtheorem*{ack}{Acknowledgement}
\numberwithin{equation}{section}
\numberwithin{figure}{section}
\def\rchi{{\hbox{\raise1.5pt\hbox{$\chi$}}}}
\def\tensor{\otimes}
\def\a{\alpha}
\def\b{\beta}
\def\lam{\lambda}
\def\Lam{\Lambda}
\def\gam{\gamma}
\def\Res{{\text{\rm{Res}}}}
\def\Sym{{\text{\rm{Sym}}}}
\def\tensor{\otimes}
\newcommand{\Mbar}{{\overline{\mathcal{M}}}}
\newcommand{\bP}{{\mathbb{P}}}
\newcommand{\bC}{{\mathbb{C}}}
\newcommand{\bE}{{\mathbb{E}}}
\newcommand{\bQ}{{\mathbb{Q}}}
\newcommand{\bZ}{{\mathbb{Z}}}
\newcommand{\cM}{{\mathcal{M}}}
\newcommand{\cD}{{\mathcal{D}}}
\newcommand{\cO}{{\mathcal{O}}}
\newcommand{\la}{{\langle}}
\newcommand{\ra}{{\rangle}}
\newcommand{\half}{{\frac{1}{2}}}
\newcommand{\rar}{\rightarrow}
\newcommand{\lrar}{\longrightarrow}
\newcommand{\Da}{\Delta}
\newcommand{\Db}{\widetilde{\Delta}}
\newcommand{\Dt}{\Delta}
\newcommand{\etatil}{{\widetilde\eta\,}}
\newcommand{\ytilde}{{\widetilde{y}\,}}
\newcommand{\ta}{\tau}
\newcommand{\lf}{\left}
\newcommand{\rt}{\right}
\newcommand{\pp}{\partial}
\renewcommand{\phi}{\varphi}
\newcommand{\lcb}{\lf\{}
\newcommand{\rcb}{\rt\}}
\newcommand{\vk}{\vec{k}}
\newcommand{\vd}{\vec{\ell}}
\newcommand{\kWithoutI}{k_{[\hat{i}]}}
\newcommand{\dWithoutI}{\ell_{[\hat{i}]}}
\newcommand{\etaWithoutI}{\eta_{[\hat{i}]}}
\newcommand{\kWithoutIJ}{k_{[\hat{i},\hat{j}]}}
\newcommand{\dWithoutIJ}{\ell_{[\hat{i},\hat{j}]}}
\newcommand{\etaWithoutIJ}{\eta_{[\hat{i},\hat{j}]}}
\newcommand{\kWithoutOne}{k_{[\hat{1}]}}
\newcommand{\dWithoutOne}{\ell_{[\hat{1}]}}
\newcommand{\etaWithoutOne}{\eta_{[\hat{1}]}}
\newcommand{\kWithoutOneJ}{k_{[\hat{1},\hat{j}]}}
\newcommand{\dWithoutOneJ}{\ell_{[\hat{1},\hat{j}]}}
\newcommand{\etaWithoutOneJ}{\eta_{[\hat{1},\hat{j}]}}
\begin{document}
\large
\setcounter{section}{0}

\title[Mirror symmetry for orbifold Hurwitz numbers]
{Mirror symmetry for orbifold Hurwitz numbers}

\author[V.\ Bouchard]{Vincent Bouchard}
\address{
Department of Mathematics and Statistical
Sciences\\
University of Alberta\\
Edmonton, AB, T6G 2G1, Canada}
\email{vincent@math.ualberta.ca}

\author[D.\ Hern\'andez Serrano]{Daniel Hern\'andez Serrano}
\address{
Department of Mathematics and IUFFYM\\
University of Salamanca\\
Salamanca 37008, Spain}
\email{dani@usal.es}

\author[X.\ Liu]{Xiaojun Liu}
\address{
Department of Applied Mathematics\\
China Agricultural University\\
Beijing, 100083, China\\
and
Department of Mathematics\\
University of California\\
Davis, CA 95616--8633, U.S.A.}
\email{xjliu@cau.edu.cn}

\author[M.\ Mulase]{Motohico Mulase}
\address{
Department of Mathematics\\
University of California\\
Davis, CA 95616--8633, U.S.A.}
\email{mulase@math.ucdavis.edu}

\begin{abstract}
We study mirror symmetry for orbifold Hurwitz numbers. We show that the Laplace transform of orbifold Hurwitz numbers satisfy a differential recursion, which is then proved to be equivalent to the integral recursion of Eynard and Orantin with spectral curve given by the $r$-Lambert curve. We argue that the $r$-Lambert curve also arises in the infinite framing limit of orbifold Gromov-Witten theory of $[\mathbb{C}^3/(\mathbb{Z}/r\mathbb{Z})]$. Finally, we prove that the mirror model to orbifold Hurwitz numbers admits a quantum curve.
\end{abstract}

\subjclass[2000]{Primary: 14H15, 14N35, 05C30, 11P21;
Secondary: 81T30}

\maketitle

\allowdisplaybreaks

\tableofcontents

\section{Introduction}
\label{sect:intro}

\subsection{Overview}

In recent years, it has been found that many counting problems involving the moduli space $\Mbar_{g,n}$,
such as Gromov-Witten invariants
of toric target spaces and enumeration of 
various ramified coverings of $\bP^1$, 
have a common feature:
they have a ``mirror symmetric'' counterpart which is governed by a universal integral recursion 
formula due to Eynard and Orantin \cite{EO1}.
The key ingredient of the mirror theory is the existence of a
\emph{spectral curve}, which is a Lagrangian 
subvariety of the holomorphic symplectic surface
$\bC^*\times \bC^*$. 
Once the spectral curve mirror to a given counting problem is determined,  
the universal recursion  
calculates the 
generating functions of the corresponding enumerative invariants. 

Simple Hurwitz numbers provide an interesting example of such a story. It was first conjectured in \cite{BM} that the generating functions for simple Hurwitz numbers should satisfy the integral recursion of Eynard and Orantin, with spectral curve given by the \emph{Lambert curve}
\begin{equation}
\label{eq:Lambert}
x=y \mathrm{e}^{-y}. 
\end{equation}
The conjecture followed from the broader \emph{remodeling conjecture} \cite{BKMP, M2}, which claims that generating functions for Gromov-Witten invariants of toric Calabi-Yau threefolds/orbifolds should satisfy the integral recursion of Eynard and Orantin, with spectral curve given by the standard mirror curve of Hori and Vafa \cite{HV}. The conjecture for simple Hurwitz numbers is derived as the infinite framing limit of the simplest case of the remodeling conjecture, namely for Gromov-Witten invariants of $\mathbb{C}^3$. 

The conjecture for simple Hurwitz numbers was solved in \cite{EMS,MZ}. There, it was shown that the generating functions of simple Hurwitz numbers defined in \cite{BM} are in fact the Laplace transform of the simple Hurwitz numbers $H_{g,n}(\vec{\mu})$ (defined below), and that the combinatorial equation known as the \emph{cut-and-join} equation \cite{Goulden,GJ, V}
automatically changes into the Eynard-Orantin integral recursion defined on the Lambert curve \eqref{eq:Lambert}, after taking the Laplace transform, Galois averaging, and restricting to the principal part. In this way the simple Hurwitz number conjecture was solved.

Through the infinite framing limit, the mathematical solution of the simple Hurwitz number conjecture presents a strong evidence for the remodeling conjecture itself. Recently, there have been many developments towards a proof of the remodeling conjecture (see for example \cite{BCMS,Chen, Zhou3}, and most notably,  \cite{EO3}). In its full generality, however,  the remodeling conjecture is still open. In particular, there are no rigorous mathematical results for the cases of orbifold Gromov-Witten invariants. 

In this paper we study mirror symmetry for Hurwitz numbers of the orbifold $\bP^1[r]$ with one stack point $\big[0\big/(\bZ/r\bZ)\big]$. 

As a first step, we use the remodeling conjecture to argue that the generating functions of such orbifold Hurwitz numbers should satisfy the integral recursion of Eynard and Orantin. As for simple Hurwitz numbers, we show that generating functions for orbifold Hurwitz numbers can be obtained in the infinite framing limit of generating functions for Gromov-Witten theory; however, instead of considering Gromov-Witten theory of $\mathbb{C}^3$, we must now consider orbifold Gromov-Witten theory of $[\mathbb{C}^3/ (\mathbb{Z} / r \mathbb{Z})]$. Via the remodeling conjecture, this implies that generating functions of orbifold Hurwitz numbers should satisfy the integral recursion, with spectral curve the infinite framing limit of the curve mirror to the orbifolds $[\mathbb{C}^3/ (\mathbb{Z} / r \mathbb{Z})]$. We show that the resulting spectral curve for orbifold Hurwitz numbers is the \emph{$r$-Lambert curve}:
\begin{equation}
\label{eq:rLamb}
x^r = y \mathrm{e}^{- r y}.
\end{equation}

We then give a rigorous proof of the recursion formula, generalizing the result of  \cite{BM, EMS, MZ} to the orbifold case. First, we prove that the $r$-Lambert curve is the correct spectral curve via Laplace transform. Then, we establish a system of recursive partial differential equations that uniquely determines the Laplace transform of the orbifold Hurwitz numbers for arbitrary genus and ramification profile at $\infty \in \bP^1[r]$. These functions are called \emph{free energies}. The Eynard-Orantin topological recursion is then established by taking the Galois average of the Laplace transform of the cut-and-join equation and restricting to the principal part of the free energies. Note that this result also provides strong evidence for the remodeling conjecture in the context of orbifold Gromov-Witten theory of $[\mathbb{C}^3/ (\mathbb{Z} / r \mathbb{Z})]$, which is still open.

We also study the appearance of a \emph{quantum curve} for orbifold Hurwitz numbers. Quantum curves arise when the mirror symmetric side of a counting problem is governed by a complex analytic curve. Here, a quantum curve \cite{ADKMV,DV2007,DHS,DHSV} means a \emph{holonomic system} that characterizes the \emph{partition function} of the theory, the latter being defined in terms of the principal specialization of the free energies. In the context of orbifold Hurwitz numbers, we show that the partition function (which is the diagonal restriction of a KP $\tau$-function) satisfies a stationary Schr\"odinger-type equation
of \cite{MSS}, that is, a quantum curve exists. Surprisingly, this linear equation alone uniquely determines the free energies for arbitrary genus. 

\subsection{Main results}

For a vector of $n$ positive integers
  $\vec{\mu}=(\mu_1,\dots,\mu_n)\in \bZ_+^n$,
the \emph{simple Hurwitz number}
 $H_{g,n}(\vec{\mu})$ counts the automorphism
weighted number of the topological types of 
simple Hurwitz covers of $\bP^1$ of type 
$(g,\vec{\mu})$. 
A holomorphic map $\varphi:C\lrar \bP^1$ is a
\emph{simple Hurwitz cover}
 of type $(g,\vec{\mu})$  
if $C$ is a complete nonsingular algebraic curve
defined
over $\bC$ of genus $g$, $\varphi$ has $n$ labeled 
poles of orders $(\mu_1,\dots,\mu_n)$, and all 
other critical 
points of $\varphi$ are unlabeled
simple ramification points.

In a similar way, we define the \emph{orbifold
Hurwitz number} $H_{g,n}^{(r)}(\vec{\mu})$
 for every positive integer
$r>0$ to be the automorphism weighted 
count of the topological types of 
smooth orbifold morphisms $\varphi:C\lrar \bP^1[r]$
with the same pole structure as the simple
Hurwitz number case. 
Here $C$ is a connected $1$-dimensional
orbifold (or a \emph{twisted} curve)
modeled on a nonsingular 
curve of genus $g$ with $(\mu_1+\cdots+\mu_n)/r$
stack points of the type $\big[p\big/(\bZ/r\bZ)\big]$.
We impose  that the inverse image of the
morphism 
$\varphi$ of the stack point 
$\big[0\big/(\bZ/r\bZ)\big]\in \bP^1[r]$
coincides with the set of stack points of $C$.
When $r=1$ we recover the 
simple Hurwitz number 
$H_{g,n}^{(1)}(\vec{\mu}) = 
H_{g,n}(\vec{\mu})$. 

Consider $H_{g,n}^{(r)}(\vec{\mu})$ as a function in $\vec{\mu}\in\bZ_+^n$. Following the recipe of \cite{DMSS, EMS, MS}, we define the \emph{free energies} as the Laplace transform
\begin{equation}
\label{eq:Fgn}
F^{(r)}_{g,n}(z_1,\dots,z_n) =
\sum_{\vec{\mu}\in \bZ_+^n}
H_{g,n}^{(r)}(\vec{\mu})\; e^{-\la \vec{w},\vec{\mu}\ra}.
\end{equation}
Here $\vec{w}=(w_1,\dots,w_n)$ is the 
vector of the Laplace
dual coordinates of $\vec{\mu}$, 
$\la \vec{w},\vec{\mu}
\ra=w_1\mu_1+\cdots +w_n\mu_n$,
and the function variable $z_i$ and $w_i$
for each $i$
are related by the $r$-\emph{Lambert}
function
\begin{equation}
\label{eq:r-Lambert}
e^{-w} = z e^{-z^r}.
\end{equation}
It is often convenient to use a different variable
\begin{equation}
\label{eq:x}
x= e^{-w},
\end{equation}
with which the $r$-Lambert curve is 
given by $x = ze^{-z^r}$. Then 
the free energies $F^{(r)}_{g,n}$ of (\ref{eq:Fgn})
are  generating functions of the 
orbifold Hurwitz numbers. We use the notation
\begin{equation}
\label{eq:Fgn in x}
F^{(r)}_{g,n}[x_1,\dots,x_n]
 = \sum_{\vec{\mu}\in \bZ_+^n}
H_{g,n}^{(r)}(\vec{\mu}) 
\prod_{i=1}^n x_i^{\mu_i}
\end{equation}
to indicate the same function (\ref{eq:Fgn}) in
the different set of variables. For 
every $(g,n)$ the power series (\ref{eq:Fgn in x})
is convergent and defines an analytic function.

Our first result, Theorem \ref{thm:limit}, states that the generating functions \eqref{eq:Fgn in x} can be obtained in the infinite framing limit of generating functions for orbifold Gromov-Witten invariants of $[\mathbb{C}^3 / (\mathbb{Z}/r \mathbb{Z})]$. This follows by rewriting both generating functions in terms of Hurwitz-Hodge integrals. On one side, a ELSV-type \cite{ELSV} formula expressing
orbifold Hurwitz numbers in terms of Hurwitz-Hodge integrals was established by Johnson-Pandharipande-Tseng \cite{JPT}, where orbifold Hurwitz  numbers were considered as a special case of double Hurwitz numbers. On the other side, orbifold Gromov-Witten invariants can also be expressed in terms of Hurwitz-Hodge integrals, through the orbifold topological vertex \cite{BC,Ross}. Using these expressions in terms of Hurwitz-Hodge integrals we establish the infinite framing correspondence for the generating functions.

Through the remodeling conjecture, it is expected that the generating functions for orbifold Gromov-Witten invariants of $[\mathbb{C}^3 / (\mathbb{Z}/r \mathbb{Z})]$ should satisfy the integral recursion of Eynard and Orantin with spectral curve
\begin{equation}
\label{eq:framedorbifold}
y^{s+rf}(1-y)-x^r=0,
\end{equation}
where $f \in \mathbb{Z}$ is a framing parameter and $s\in\bZ$ determines the weight of the action of $\bZ/r\bZ$ on $\bC^3$. By taking the limit of infinite framing, $f \to \infty$,
after an appropriate coordinate change
\begin{equation}
\begin{cases}
x\longmapsto \frac{x}{f^{1/r}}\\
y\longmapsto 1-\frac{y}{f}
\end{cases}
\end{equation}
we obtain the $r$-Lambert curve \eqref{eq:rLamb}. Therefore, we expect the free energies \eqref{eq:Fgn in x} to satisfy the integral recursion of Eynard and Orantin, with spectral curve the $r$-Lambert curve.

Our next result is an explicit determination of all the free energies \eqref{eq:Fgn}:
\begin{thm}
\label{thm:Fgn recursion}
In terms of the $z$-variables, the 
free energies are calculated as follows.
\begin{align}
\label{eq:F01}
F^{(r)}_{0,1}(z) &= \frac{1}{r}z^r-\half z^{2r},
\\
\label{eq:F02}
F^{(r)}_{0,2}(z_1,z_2) &=\log\frac{z_1-z_2}{x_1-x_2}
-(z_1^r+z_2^r),
\end{align}
where $x_i = z_i e^{-z_i ^r}$.
 For $(g,n)$ in the stable range, 
 i.e., when $2g-2+n>0$, the free energies 
 satisfy the differential recursion equation
 \begin{multline}
 \label{eq:diffrecursion}
 \left(
 2g-2+n+\frac{1}{r}\sum_{i=1}^n
 z_i \frac{\partial}{\partial z_i}
 \right)
 F^{(r)}_{g,n}(z_1,\dots,z_n) 
 \\
 =
 \half\sum_{i\ne j}\frac{z_iz_j}{z_i-z_j}
 \left[
 \frac{1}{(1-rz_i^r)^2}\frac{\partial}{\partial z_i}
 F^{(r)}_{g,n-1}\big(z_{[\hat{j}]}\big) -
  \frac{1}{(1-rz_j^r)^2}\frac{\partial}{\partial z_j}
 F^{(r)}_{g,n-1}\big(z_{[\hat{i}]}\big)
 \right]
 \\
 +
 \half \sum_{i=1}^n \frac{z_i^2}{(1-rz_i^r)^2}
 \left.
 \frac{\partial^2}{\partial u_1\partial u_2}
 F^{(r)}_{g-1,n+1}\big(
 u_1,u_2,z_{[\hat{i}]}
 \big)\right|_{u_1=u_2=z_i}
 \\
 +
 \half \sum_{i=1}^n \frac{z_i^2}{(1-rz_i^r)^2}
 \sum_{\substack{
 g_1+g_2=g\\I\sqcup J=[\hat{i}]}} ^{\rm{stable}}
 \left(
 \frac{\partial}{\partial z_i} 
 F^{(r)}_{g_1,|I|+1}(z_i,z_I)
 \right)
 \left(
  \frac{\partial}{\partial z_i} 
 F^{(r)}_{g_2,|J|+1}(z_i,z_J)
 \right).
 \end{multline}
 Here we use the following convention for
 indices. The index set is 
 $[n]=\{1,2,\dots,n\}$, and for 
 a subset $I\subset [n]$, $z_I = (z_i)_{i\in I}$.
 The hat symbol $\hat{i}$ means the omission of $i$
 from $[n]$. The final summation is over all 
 non-negative integer
 partitions of $g$ and set partitions of $[\hat{i}]$
 subject to the stability conditions
 $2g_1-2+|I|\ge 0$ and $2g_2-2+|J|\ge 0$.
\end{thm}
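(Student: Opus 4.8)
The plan is to derive the recursion \eqref{eq:diffrecursion} as the Laplace transform of the \emph{cut-and-join equation} for orbifold Hurwitz numbers, and to establish the base cases \eqref{eq:F01}--\eqref{eq:F02} by direct evaluation. The combinatorial input is the orbifold analogue of the cut-and-join recursion: adding one simple branch point to a cover either joins two of the poles over $\infty$ (decreasing $n$) or cuts one pole into two (either lowering the genus or disconnecting the source). Concretely, I expect an identity of the shape
\begin{multline*}
\Big(2g-2+n+\tfrac1r\textstyle\sum_i\mu_i\Big)H^{(r)}_{g,n}(\vec\mu)
= \half\sum_{i\ne j}(\mu_i+\mu_j)H^{(r)}_{g,n-1}\big(\mu_i+\mu_j,\mu_{[\widehat{i,j}]}\big) \\
+ \half\sum_i\sum_{\alpha+\beta=\mu_i}\alpha\beta\Big[H^{(r)}_{g-1,n+1}\big(\alpha,\beta,\mu_{[\hat i]}\big)
+ \sum_{\substack{g_1+g_2=g\\ I\sqcup J=[\hat i]}}H^{(r)}_{g_1,|I|+1}(\alpha,\mu_I)H^{(r)}_{g_2,|J|+1}(\beta,\mu_J)\Big],
\end{multline*}
where the coefficient $2g-2+n+\frac1r\sum_i\mu_i$ is the number of simple branch points computed from the orbifold Riemann--Hurwitz formula (the factor $\frac1r$ reflecting the contribution of the $\mu_i/r$ stack points over $[0/(\bZ/r\bZ)]$). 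I would obtain this either from the representation-theoretic (Frobenius/Burnside) formula for $H^{(r)}_{g,n}$ or from the principal specialization of the KP structure mentioned later in the paper.

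Next I would apply the generating-function operator $\sum_{\vec\mu}(-)\prod_i x_i^{\mu_i}$ and pass to the $z$-variables through the $r$-Lambert relation $x=ze^{-z^r}$. The two elementary identities driving the calculation are
\[
x\frac{\partial}{\partial x}=\frac{z}{1-rz^r}\frac{\partial}{\partial z},
\qquad
\frac{\partial}{\partial z}\log x = \frac{1-rz^r}{z},
\]
which turn multiplication by $\mu_i$ into the operator $\frac{z_i}{1-rz_i^r}\partial_{z_i}$. The cut terms, carrying the weight $\alpha\beta$, become a product of two such first-order operators in auxiliary variables $u_1,u_2$ specialised to $u_1=u_2=z_i$; this produces exactly the prefactor $\frac{z_i^2}{(1-rz_i^r)^2}$ together with $\partial_{u_1}\partial_{u_2}$, yielding the third and fourth lines of \eqref{eq:diffrecursion}. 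The join term, carrying the weight $\mu_i+\mu_j$ and the convolution $\sum_{\mu_i+\mu_j=\nu}x_i^{\mu_i}x_j^{\mu_j}$, must be resummed by partial fractions; symmetrising in $i\leftrightarrow j$ and isolating the regular part is what produces the antisymmetric kernel $\frac{z_iz_j}{z_i-z_j}$ multiplying the bracket on the second line.

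The base cases I would treat separately. For $F^{(r)}_{0,1}$ and $F^{(r)}_{0,2}$ one has closed forms for the genus-zero one- and two-pole orbifold Hurwitz numbers (for instance from the ELSV-type formula of \cite{JPT}, or directly: $H^{(r)}_{0,1}(r)=1/r$ from the automorphisms of $z\mapsto z^r$); summing the Laplace series and performing the Lagrange inversion attached to $x=ze^{-z^r}$ gives the stated polynomial \eqref{eq:F01} and the logarithmic expression \eqref{eq:F02}. Finally, since the power series \eqref{eq:Fgn in x} are convergent, all the formal rearrangements above are legitimate, and an induction on $2g-2+n$ shows the recursion together with the base cases determines every $F^{(r)}_{g,n}$.

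The main obstacle I anticipate is matching the left-hand sides. The literal Laplace transform produces the operator $\frac1r\sum_i\frac{z_i}{1-rz_i^r}\partial_{z_i}$, whereas \eqref{eq:diffrecursion} displays the cleaner $\frac1r\sum_i z_i\partial_{z_i}$; the difference $-\sum_i\frac{z_i^{r+1}}{1-rz_i^r}\partial_{z_i}F^{(r)}_{g,n}$ must be generated and cancelled by genuinely singular contributions of the transformed join and cut terms. Keeping track of these $\frac{1}{1-rz^r}$ denominators---showing they recombine to leave precisely the right-hand side of \eqref{eq:diffrecursion} with no spurious polynomial remainder---is the delicate bookkeeping step, exactly the analogue of the ``principal part'' analysis of \cite{EMS,MZ} that I would follow most closely.
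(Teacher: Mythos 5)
Your proposal is correct and takes essentially the same route as the paper: the paper likewise proves \eqref{eq:diffrecursion} by taking the Laplace transform of the cut-and-join equation \eqref{eq:CAJ}, computes the base cases \eqref{eq:F01}--\eqref{eq:F02} from the formulas of \cite{JPT} via Lagrange inversion on $x=ze^{-z^r}$, and resolves the left-hand-side mismatch you flag exactly as you anticipate. Specifically, the unstable $(g_1,I)=(0,\emptyset)$ terms of the quadratic sum contribute $\sum_i z_i^r\,\frac{z_i}{1-rz_i^r}\frac{\partial}{\partial z_i}F^{(r)}_{g,n}$, which, moved to the left, turns $\frac1r\frac{z_i}{1-rz_i^r}\frac{\partial}{\partial z_i}$ into $\frac1r z_i\frac{\partial}{\partial z_i}$, while the unstable $(0,2)$ terms combine with the join terms so that the $\frac{1}{x_i-x_j}$ poles cancel and leave the kernel $\frac{z_iz_j}{z_i-z_j}\frac{1}{(1-rz_i^r)^2}$.
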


\begin{rem}
\label{rem:Fgn recursion}
Since $F^{(r)}_{g,n}(z_1,\dots,z_n)\big|_{z_i=0}=0$
for every $i$, the differential recursion 
(\ref{eq:diffrecursion}), which is a 
linear first order partial differential 
equation, uniquely determines
$F^{(r)}_{g,n}$ one by one inductively for all
$(g,n)$ subject to $2g-2+n>0$. 
This generalizes the  result of \cite{MZ}
to the orbifold case.
\end{rem}

The differential recursion of Theorem~\ref{thm:Fgn recursion} is obtained by taking the Laplace 
transform of the cut-and-join equation for $H_{g,n}^{(r)}(\vec{\mu})$. The $r$-Lambert curve itself, \eqref{eq:rLamb}, is obtained by
computing the Laplace transform of $H_{0,1}^{(r)}(\mu)$.

Our third theorem concerns the existence of a quantum curve for orbifold Hurwitz numbers. Since the $r$-Lambert curve has a global 
parameter $z$, the algebraic $K$-theory condition required for the existence of the quantization (see for instance \cite{GS}) is automatically
satisfied, and we have the following result.
\begin{thm}
\label{thm:quantum curve}
The partition function of the orbifold
Hurwitz numbers is given by 
\begin{equation}
\label{eq:partition}
Z^{(r)}(z,\hbar) = 
\exp\left(
\sum_{g=0} ^\infty \sum_{n=1} ^\infty
\frac{1}{n!}\hbar^{2g-2+n}F^{(r)}_{g,n}(z,z,\dots,z)
\right).
\end{equation}
It satisfies the following system of (an infinite-order)
linear differential equations.
\begin{align}
\label{eq:P}
 \left(
 \hbar D - e^{r \left(-w+\frac{r-1}{2}\hbar\right)}
 e^{r\hbar D}
 \right)
 Z^{(r)}(z,\hbar) &=0,
\\
\label{eq:Q}
\left(
\frac{\hbar}{2}
D^2-\left(\frac{1}{r}+\frac{\hbar}{2}
\right) D
-\hbar\frac{\partial}{\partial \hbar}
\right)
Z^{(r)}(z,\hbar) &=0,
\end{align}
where
\begin{equation*}
D=\frac{z}{1-rz^r}\frac{\partial}
{\partial z}=x\frac{\partial}{\partial x}
= -\frac{\partial}{\partial w}.
\end{equation*}
Let the differential operator of (\ref{eq:P})
(resp.~(\ref{eq:Q})
 be denoted by $P$  (resp.~$Q$). Then 
we have the commutator relation
\begin{equation}
\label{eq:[P,Q]}
[P,Q]=P.
\end{equation}
The semi-classical limit of each of the 
equations (\ref{eq:P}) or (\ref{eq:Q})
recovers the $r$-Lambert curve
(\ref{eq:r-Lambert}).
\end{thm}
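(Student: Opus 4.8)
The plan is to derive the two operator equations \eqref{eq:P} and \eqref{eq:Q} from the differential recursion of Theorem~\ref{thm:Fgn recursion} by \emph{principal specialization} $z_1=\cdots=z_n=z$, and then to dispatch the commutator relation \eqref{eq:[P,Q]} and the semiclassical statement by direct computation. Throughout I write $S=\log Z^{(r)}$, so that $DZ^{(r)}=(DS)Z^{(r)}$ and $D^2Z^{(r)}=\big((DS)^2+D^2S\big)Z^{(r)}$; the role of the exponential is precisely to convert the quadratic (splitting) terms of the recursion into $(DS)^2$ and the genus-dropping term into $D^2S$.

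I would establish \eqref{eq:Q} first. Setting all $z_i=z$ and applying $\sum_{g,n}\frac{1}{n!}\hbar^{2g-2+n}$ to \eqref{eq:diffrecursion}, the prefactor $2g-2+n$ on the left assembles into $\hbar\frac{\partial}{\partial\hbar}S$ up to the unstable $F^{(r)}_{0,1}$ contribution, which carries weight $\hbar^{-1}$, while $\frac1r\sum_i z_i\partial_{z_i}$ becomes $\frac1r z\frac{d}{dz}=\frac{1-rz^r}{r}D$. On the right the decisive observation is that the coefficient $\frac{z_i^2}{(1-rz_i^r)^2}$ is tuned so that $\frac{z_i^2}{(1-rz_i^r)^2}\partial_{u_1}\partial_{u_2}=D_{u_1}D_{u_2}$ at $u_1=u_2=z_i$; hence, after summation against $\frac1{n!}$ and the subset weights, the $F^{(r)}_{g-1,n+1}$ term and the splitting sum recombine through $Z^{(r)}=e^{S}$ into $\half\hbar D^2Z^{(r)}$. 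Reinstating the unstable data $F^{(r)}_{0,1}$, with $DF^{(r)}_{0,1}=z^r$, and $F^{(r)}_{0,2}$ then converts the residual $\frac{1-rz^r}{r}D$ into the clean coefficients $\frac1r+\frac\hbar2$ of \eqref{eq:Q}.

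For the infinite-order equation \eqref{eq:P} I would pass to an equivalent functional form. Since $D=-\partial/\partial w$, the operator $e^{r\hbar D}$ is the shift $w\mapsto w-r\hbar$, so dividing \eqref{eq:P} by $Z^{(r)}$ turns it into the single identity $\hbar\,DS=e^{r(-w+\frac{r-1}{2}\hbar)}\,e^{S(w-r\hbar)-S(w)}$. I would prove this by matching the genus/$\hbar$-expansions of the two sides: the finite difference $S(w-r\hbar)-S(w)$ expands into the same operators $D^kS$ that occur in the specialized recursion, so every coefficient reduces to a consequence of Theorem~\ref{thm:Fgn recursion}. Equivalently, and more conceptually, \eqref{eq:P} can be read off from the Laplace transform of the cut-and-join equation for $H^{(r)}_{g,n}(\vec\mu)$: the merging of parts of $\vec\mu$ becomes, after Laplace transform, the shift operator $e^{r\hbar D}$, and the constant $\frac{r-1}{2}\hbar$ in the exponent is the normal-ordering term generated by symmetrizing this shift. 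Once \eqref{eq:P} is in hand, \eqref{eq:Q} could alternatively be recovered from it together with \eqref{eq:[P,Q]} and the normalization $Z^{(r)}\big|_{z=0}=1$, provided one knows that the kernel of $P$ is one-dimensional.

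Finally, the commutator \eqref{eq:[P,Q]} I would verify by a direct operator computation using $[D,w]=-1$, the conjugation $e^{r\hbar D}\,w=(w-r\hbar)\,e^{r\hbar D}$, and the explicit $\hbar$-dependence of $P$ acted on by $\frac{\partial}{\partial\hbar}$ inside $Q$. For the semiclassical limit I insert the WKB form $S=\hbar^{-1}F^{(r)}_{0,1}+O(1)$ and use $DF^{(r)}_{0,1}=z^r=:y$: the leading order of \eqref{eq:P} is immediately $y=e^{-rw}e^{ry}$, i.e.\ the $r$-Lambert curve \eqref{eq:r-Lambert}, while the leading order of \eqref{eq:Q} is the Hamilton--Jacobi relation $\half y^2-\frac1r y+F^{(r)}_{0,1}=0$, which yields the same curve after one differentiation in $w$. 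The step I expect to be the main obstacle is the derivation of \eqref{eq:P}: unlike \eqref{eq:Q} it cannot be read off termwise from the second-order recursion, and requires either a complete resummation of the finite-difference expansion of $S$ or the careful passage through the cut-and-join/Laplace picture, including the precise determination of the normal-ordering constant. A secondary but real difficulty, already present for \eqref{eq:Q}, is the principal specialization of the $i\ne j$ term, whose factor $\frac{z_iz_j}{z_i-z_j}$ is singular on the diagonal and only becomes finite through the antisymmetry of the bracket, producing derivative contributions that must be tracked alongside the combinatorial weights.
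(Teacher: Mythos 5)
Your treatment of \eqref{eq:Q}, the commutator, and the semiclassical limit coincides with what the paper does. Proposition~\ref{prop:Sm} is exactly your principal specialization of \eqref{eq:diffrecursion} (including the derivative terms produced by the diagonal limit of the $\frac{z_iz_j}{z_i-z_j}$ factor, which you correctly flag); the paper then reinstates the unstable data $S_0^{(r)}=F_{0,1}^{(r)}$ and $S_1^{(r)}=\half F_{0,2}^{(r)}(z,z)$, forms $F^{(r)}(z,\hbar)=\sum_m S_m^{(r)}\hbar^{m-1}$, and exponentiates so that the splitting terms become $(DS)^2$ and the genus-dropping term becomes $D^2S$, yielding \eqref{eq:ps}, i.e.\ \eqref{eq:Q}. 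The commutator \eqref{eq:[P,Q]} is likewise verified by the same shift-operator manipulation you propose, and the semiclassical computation is the one deferred to \cite{MS}.

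The genuine gap is \eqref{eq:P}. The paper does not prove it: it is imported from \cite{MSS} (see Remark~\ref{rem:MSS} and equation \eqref{eq:MSS}), where it is established by entirely different means, via the structure of the full partition function as (the diagonal restriction of) a KP $\tau$-function and the operator form of the cut-and-join equation, not via the $n$-point Laplace transforms. Your primary route does not close this gap: expanding $S(w-r\hbar)-S(w)$ in powers of $\hbar$ turns \eqref{eq:P} into an infinite-order recursion expressing $DS_{m+1}$ through all higher derivatives $D^kS_j$, $j\le m$, whereas Theorem~\ref{thm:Fgn recursion} (equivalently \eqref{eq:Q}) gives a \emph{second-order} recursion; the assertion that "every coefficient reduces to a consequence of Theorem~\ref{thm:Fgn recursion}" is precisely the claim that these two distinct recursions have the same solution, which is the nontrivial content of the quantum-curve statement and is offered here without any mechanism. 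Your "conceptual" route (merging of parts of $\vec\mu$ becomes the shift $e^{r\hbar D}$, with $\frac{r-1}{2}\hbar$ as a normal-ordering constant) is the right intuition, but as stated it is a heuristic: the cut-and-join operator is second order in the variables dual to $\vec\mu$ and does not naively restrict to the principal specialization locus, and making this rigorous is exactly what \cite{MSS} does. Finally, your fallback of recovering \eqref{eq:Q} from \eqref{eq:P}, \eqref{eq:[P,Q]} and a one-dimensional kernel runs in the wrong logical direction, since \eqref{eq:P} is the equation you cannot yet prove; and in the direction actually available to you, $QZ=0$ together with $[P,Q]=P$ only gives $Q(PZ)=-PZ$, so concluding $PZ=0$ would in addition require showing that $Q$ has no eigenvector with eigenvalue $-1$ in the relevant space of formal series, which you do not address.
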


\begin{rem}
\label{rem:MSS}
The Schr\"odinger equation (\ref{eq:P})
is established in \cite{MSS}.
\end{rem}

\begin{rem}
\label{rem:r=1}
The above theorem
is a generalization of
  \cite[Theorem~1.3]{MS}
  for an arbitrary $r>0$. The restriction
  $r=1$ reduces to the simple Hurwitz case.
\end{rem}

Our final result establishes the prediction from the infinite framing limit that the free energies \eqref{eq:Fgn} should satisfy the integral recursion of Eynard and Orantin with spectral curve the $r$-Lambert curve \eqref{eq:rLamb}. More precisely, it is the symmetric differential forms
\begin{equation}
\label{eq:Wgnr}
W_{g,n}^{(r)}(z_1,\dots,z_n):=
d_1 d_2\cdots d_n F_{g,n}^{(r)}(z_1,\dots,z_n)
\end{equation}
that should satisfy the Eynard-Orantin 
integral recursion on the $r$-Lambert curve.  We establish this fact in the next theorem.\footnote{We refer to \cite{MS} for the precise 
mathematical formulation of the 
Eynard-Orantin recursion formalism.}

\begin{rem}
The significance of the integral formalism
is its universality. The differential equation
(\ref{eq:diffrecursion}) takes a 
 different form
depending on the counting problem, whereas
the integral formula (\ref{eq:r-EO})
depends only on the 
choice of the spectral curve. 
\end{rem}

The Eynard-Orantin integral recursion
requires  a set of geometric data from the 
$r$-Lambert curve, given in parameteric form by $x(z)=z e^{-z^r}$, $y(z)=z^r$. 
The function $x(z)$ has $r$ critical points
at $1-rz^r=0$. Let
$\{p_1,\dots,p_r\}$ be the list of
these critical points. Since $dx=0$ has a simple
zero at each $p_j$,  the map
$x(z)$ is locally a double-sheeted covering
around $z=p_j$. 
We denote by $s_j$ the deck transformation 
on a small neighborhood of $p_j$.

\begin{thm}
\label{thm:r-EO}
For the stable range $2g-2+n>0$, the symmetric
differential forms satisfy the following
integral recursion 
formula.
\begin{multline}
\label{eq:r-EO}
W_{g,n}^{(r)}(z_1,\dots,z_n)
=\frac{1}{2\pi i}\sum_{j=1} ^{r}
\oint_{\gam_j}K_j(z,z_1)
\Bigg[
W_{g-1,n+1}^{(r)}\big(z,s_j(z),z_2,\dots,z_n\big)
\\
+
\sum_{i=2}^n 
\left(
W_{0,2}^{(r)}(z,z_i)
\tensor
W_{g,n-1}^{(r)}
\big(s_j(z),z_{[\hat{1},\hat{i}]}\big)
+
W_{0,2}^{(r)}\big(s_j(z),z_i\big)
\tensor
W_{g,n-1}^{(r)}
\big(z,z_{[\hat{1},\hat{i}]}\big)
\right)
\\
+
\sum_{\substack{g_1+g_2=g\\
I\sqcup J=\{2,\dots,n\}}}
^{\rm{stable}}
W_{g_1,|I|+1}^{(r)}\big(z,z_I\big)
\tensor
W_{g_2,|J|+1}^{(r)}\big(s_j(z),z_J\big)
\Bigg].
\end{multline}
Here the integration is taken with respect to 
$z$ along a small simple closed loop $\gam_j$
around $p_j$. The integration kernel is defined by
\begin{equation}
\label{eq:kernel}
K_j(z,z_1)= \half\;
\frac{1}{W_{0,1}^{(r)}\big(s_j(z_1)\big)
-W_{0,1}^{(r)}(z_1)}
\tensor
\int_{z}^{s_j(z)}W_{0,2}^{(r)}(\;\cdot\;,z_1).
\end{equation}

\end{thm}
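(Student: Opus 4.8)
The plan is to prove that the integral recursion \eqref{eq:r-EO} is equivalent to the differential recursion \eqref{eq:diffrecursion} of Theorem~\ref{thm:Fgn recursion}. Since \eqref{eq:diffrecursion} together with the conditions $F^{(r)}_{g,n}\big|_{z_i=0}=0$ determines every $F^{(r)}_{g,n}$, hence every $W^{(r)}_{g,n}=d_1\cdots d_n F^{(r)}_{g,n}$, uniquely (Remark~\ref{rem:Fgn recursion}), it suffices to evaluate the right-hand side of \eqref{eq:r-EO} by residues and check that the resulting form equals $W^{(r)}_{g,n}$. The essential simplification is that the $r$-Lambert curve is rational, with $z$ a global coordinate, so each $W^{(r)}_{g,n}$ is a rational differential reconstructed from its principal parts; the kernel $K_j$ is exactly the Cauchy-type kernel carrying out this reconstruction at the critical points $p_j$.

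First I would record the geometric data. From \eqref{eq:F02} one gets the Bergman kernel
\[
W^{(r)}_{0,2}(z_1,z_2)=\left(\frac{1}{(z_1-z_2)^2}-\frac{x'(z_1)x'(z_2)}{(x(z_1)-x(z_2))^2}\right)dz_1\,dz_2,
\]
while the spectral-curve datum entering the kernel is $W^{(r)}_{0,1}=y\,dx=z^r\,dx$ with $dx=(1-rz^r)e^{-z^r}\,dz$. Because the deck transformation fixes $x$, the kernel denominator is $W^{(r)}_{0,1}(s_j(z))-W^{(r)}_{0,1}(z)=(s_j(z)^r-z^r)\,dx$, and the numerator telescopes to the rational form
\[
\int_z^{s_j(z)}W^{(r)}_{0,2}(\,\cdot\,,z_1)=\frac{s_j(z)-z}{(z-z_1)(s_j(z)-z_1)}\,dz_1,
\]
so that $K_j(z,z_1)$ is completely explicit. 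The critical points $p_1,\dots,p_r$ solve $1-rz^r=0$; writing $\eta=z-p_j$ one has $s_j(z)=p_j-\eta+O(\eta^2)$, and a direct estimate shows the scalar part of $K_j$ has a simple pole in $\eta$. Hence each $\oint_{\gam_j}$ localizes to a residue at $p_j$; as $z,s_j(z)\to p_j$ the kernel factor $\frac{1}{(z-z_1)(s_j(z)-z_1)}$ tends to $\frac{1}{(p_j-z_1)^2}$, so the residue is a function of $z_1$ with poles only at $z_1=p_j$.

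The core is then a local analysis at each $p_j$ combining the \emph{Galois average} (symmetrization under $z\leftrightarrow s_j(z)$) with the \emph{principal-part} projection. Applying $d_1\cdots d_n$ to \eqref{eq:diffrecursion} and inducting on $2g-2+n$, the factors $\frac{1}{(1-rz_i^r)^2}$ show that each $W^{(r)}_{g,n}$ is, in every variable, a rational $1$-form with poles only at the $p_j$, without residues there, and regular at $z=0$ and $z=\infty$; such a form equals the sum of its principal parts. The residue of \eqref{eq:r-EO} at $p_j$ reconstructs precisely these principal parts: expanding the integrand in $\eta$ and Galois-averaging, the handle term $W^{(r)}_{g-1,n+1}(z,s_j(z),\dots)$, the $W^{(r)}_{0,2}$-convolutions, and the stable-splitting sum reproduce the three groups on the right of \eqref{eq:diffrecursion}. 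The coincidence $s_j(z)\to z$ inside the residue yields the diagonal restriction $u_1=u_2$ of the handle term, the $W^{(r)}_{0,2}(z,z_i)$ factors generate the $\frac{z_iz_j}{z_i-z_j}$ cross-terms, and the factor $\frac{1}{(1-rz_i^r)^2}$ — which is exactly what creates the pole of $W^{(r)}_{g,n}$ at $z_i=p_j$ — is matched by the leading behaviour of $dx$ and the kernel at $p_j$.

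The hard part will be this local residue computation. One must expand $s_j(z)=p_j-\eta+O(\eta^2)$ to the order needed to extract the residue — both the kernel and the $W$-forms have to be carried past leading order — and then verify that under the Galois average the antisymmetric contributions in $z\leftrightarrow s_j(z)$ cancel while the symmetric ones assemble into the stated coefficients, uniformly over the $r$ critical points. Equally important is confirming that the reconstruction is exact: that no further poles in $z_1$ (at $z_1=0$ or $z_1=\infty$) appear, so that the principal parts at the $p_j$ computed from \eqref{eq:diffrecursion} account for all of $W^{(r)}_{g,n}$. Once these local matches are established, the two recursions coincide and the theorem follows.
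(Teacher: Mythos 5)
Your overall strategy is essentially the paper's own: localize the contour integrals in \eqref{eq:r-EO} to residues at the ramification points, interpret those residues as extracting the principal parts of rational forms whose only poles lie on $1-rz^r=0$, and match the result against the differential recursion \eqref{eq:diffrecursion}. Several of your preliminary steps are correct and appear in the paper in the same role: the explicit rational form of the kernel (the $x$-part of $W^{(r)}_{0,2}$ drops out of $\int_z^{s_j(z)}$ because $x\circ s_j=x$), the fact that each $W^{(r)}_{g,n}$ is in every variable an exact rational $1$-form with poles only at the $p_j$ and regular at $0$ and $\infty$, hence equal to the sum of its principal parts, and the idea that Galois-averaging in the first variable is what the residue operation implements.

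However, the pivotal step is missing, and the matching as you describe it would not prove the theorem. The left-hand side of \eqref{eq:diffrecursion} is not $F^{(r)}_{g,n}$ but the Euler-type operator $\bigl(2g-2+n+\tfrac1r\sum_i z_i\partial_{z_i}\bigr)$ applied to $F^{(r)}_{g,n}$; after applying $d_1\cdots d_n$ it equals $(2g-2+n)W^{(r)}_{g,n}+\tfrac1r\, d_1\cdots d_n\bigl(\sum_i z_i\partial_{z_i}F^{(r)}_{g,n}\bigr)$, which is not $W^{(r)}_{g,n}$. So showing that the residues in \eqref{eq:r-EO} ``reproduce the three groups on the right of \eqref{eq:diffrecursion}'' would identify the Eynard--Orantin side with a transform of this Euler-operator expression, not with $W^{(r)}_{g,n}$. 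The actual content of the paper's proof is precisely the collapse of that Euler-operator side: after Galois-averaging in $z_1$, keeping the part symmetric under $z_1\leftrightarrow s_j(z_1)$, and multiplying by the specific holomorphic factor $v_1\,dv_1\big/\bigl(r\phi^{r,0}_{-1}(v_1)\bigr)$, the left-hand side becomes $d_{z_1}F^{(r)}_{g,n}$ plus a holomorphic remainder that the principal-part projection annihilates. This collapse is not formal: it uses the ELSV-type expansion of $F^{(r)}_{g,n}$ in the auxiliary functions $\xi^{r,k}_\ell$, their decomposition into odd and even parts $\phi^{r,k}_\ell$ and $h^{r,k}_\ell$ in the local Airy coordinate $v$ (Proposition~\ref{prop:analytic-properties-phi-h}), the recursion $\phi^{r,k}_{n+1}=\bigl(E^{r,k}-\tfrac{r}{v}\tfrac{d}{dv}\bigr)\phi^{r,k}_n$, and an exact cancellation of the $E^{r,k}$-terms. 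You would also need a precise statement behind your appeal to ``reconstruction from principal parts,'' namely Lemma~\ref{lem:residue-calculation}: $\sum_j\Res_{\zeta=\a_j}\frac{m(\zeta)}{\zeta-\eta}=-\lcb m(\eta)\rcb_\eta$ for $m\in\cM_U$. This is indispensable because the integrands of \eqref{eq:r-EO} involve $s_j(z)$ and are therefore only locally defined holomorphic objects, not rational functions, so one must first make sense of their principal parts (the paper does this via reduction modulo powers of $1-\eta^r$) before any global reconstruction argument applies. Your ``hard part'' (expanding $s_j(z)$ to higher order and checking cancellation of antisymmetric terms) is real but secondary; the missing idea is the conversion of the Euler-operator recursion for $F^{(r)}_{g,n}$ into a statement about $W^{(r)}_{g,n}$ itself.
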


\begin{rem}
The proof is based on the idea of \cite{EMS}. 
The notion of the \emph{principal part}
of  meromorphic differentials
 plays a key role in converting the
Laplace transform of the cut-and-join
equation into a residue formula. We
generalize this technique to a more
suitable one that works
for the current orbifold case.
\end{rem}

\begin{rem}
When our manuscript was being finalized, we noticed 
an extremely interesting paper \cite{DLN}. 
The authors of 
\cite{DLN} derive the same
spectral curve using a concrete
graph counting argument, and establish
Theorem~\ref{thm:Fgn recursion} independently. 
They
also claim to have proved our
Theorem~\ref{thm:r-EO}. Although they have
the right strategy, 
their proof  as written is in error. 
\cite[Lemma~13]{DLN} does not 
hold, while it is used
in the key step of proving \cite[Eqn.(22)]{DLN}.
\end{rem}

\subsection{Outline}

The paper is organized as follows.
Section~\ref{sect:Hurwitz} reviews the
orbifold Hurwitz numbers. The key formulas
we use in this paper are the 
ELSV-type formula (\ref{eq:ELSV}) of
\cite{JPT} and the cut-and-join equation
(\ref{eq:CAJ}).
Section~\ref{sect:vertex} is devoted to the infinite framing relation between orbifold Hurwitz numbers and Gromov-Witten 
theory of $[ \bC^3\big/(\bZ/r\bZ)]$.
We then calculate the Laplace transform
of the orbifold Hurwitz numbers and 
prove Theorem~\ref{thm:Fgn recursion}
in Section~\ref{sect:Laplace}.
Section~\ref{sect:properties} lists some
properties enjoyed by the free energies. 
The quantum curve of the 
$r$-Lambert curve is studied in 
Section~\ref{sect:quantum}.
The proof of Theorem~\ref{thm:r-EO} is 
given in Section~\ref{sect:EO}.

\section{The orbifold Hurwitz numbers}
\label{sect:Hurwitz}

The polynomial behavior of simple
Hurwitz numbers $H_{g,n}(\vec{\mu})$
\cite{GJ,V} as a function in $\vec{\mu}$ has
been a long mystery. 
The 
\emph{polynomiality}
has become manifest in the 
Ekedahl-Lando-Shapiro-Vainshtein
 formula \cite{ELSV}
that relates simple Hurwitz numbers and the
Hodge integrals on the Deligne-Mumford moduli
$\Mbar_{g,n}$. 
Another manifestation of the polynomiality is
found in \cite{MZ}, where it is established
that the Laplace transform
\begin{equation}
\label{eq:FgnH}
F_{g,n}(t_1,\dots,t_n)
=\sum_{\vec{\mu}\in\bZ_+^n}
H_{g,n}(\vec{\mu}) e^{-\la \vec{w},\vec{\mu}\ra}
\end{equation}
is a polynomial of degree $3(2g-2+n)$
in  $t_i$-variables.
Here the variables are related by
$$
e^{-w} = ze^{-z},\qquad z=\frac{t-1}{t}.
$$

The
\emph{orbifold Hurwitz numbers}
$H_{g,n}^{(r)}(\vec{\mu})$ no longer exhibits
the same polynomiality.  But it shows   a 
\emph{piecewise} polynomial behavior. 
Indeed, we can define 
$H_{g,n}^{(r)}(\vec{\mu})$ as a double Hurwitz
number, which is the automorphism 
weighted count of the topological types of
double Hurwitz covers 
$\varphi:C\lrar \bP^1$. Here $C$ is a connected
nonsingular curve of genus $g$, and $\varphi$ is 
a holomorphic map that has $n$ labeled 
poles of orders $\vec{\mu}$,  $m$
\emph{unlabeled} zeros of degree $r$, and
all other critical points are unlabeled simple 
ramification points. 
The number of zeros is given by
\begin{equation}
\label{eq:m}
m = \frac{\mu_1+\cdots+\mu_n}{r}.
\end{equation}
This is a special case of the fully general
double Hurwitz numbers
$H_{g,m,n}(\vec{\mu},\vec{\nu})$ of 
arbitrary zeros and poles and otherwise simply
ramified. We refer to \cite{CJM,GJV} for 
further discussions on the piecewise polynomiality.

Reflecting the chamber structure of the 
polynomiality, the ELSV-type formula
for orbifold Hurwitz numbers is
 more complicated than the original case.
 The following formula is established in
 Johnson-Pandharipande-Tseng 
 \cite{JPT}.

\begin{thm}[\cite{JPT}]
\label{thm:JPT}
The orbifold Hurwitz number has an expression 
in terms of linear Hodge integrals as follows:
\begin{equation}
\label{eq:ELSV}
H_{g,n}^{(r)}(\mu_1,\dots,\mu_n)
=
r^{1-g +\sum_{i=1}^n \la \frac{\mu_i}{r}\ra}
\int_{\Mbar_{g,-\vec{\mu}}(BG)}
\frac{\sum_{j\ge 0}(-r)^j \lam_j}
{\prod_{i=1}^n(1-\mu_i\psi_i)}
\prod_{i=1}^n
 \frac{\mu_i^{\lfloor\frac{\mu_i}{r}\rfloor}}
{\lfloor\frac{\mu_i}{r}\rfloor!}.
\end{equation}
Here, $G=\bZ/r\bZ$, and $BG$ is the classifying
space of $G$. 
The floor and the fractional part of $q\in\bQ$
is given by
$q=\lfloor q \rfloor +\la q\ra$.
$\Mbar_{g,-\vec{\mu}}(BG)$ denotes
the moduli space of stable morphisms to $BG$
from a stable curve of genus $g$ and 
$n$ smooth points on it, with a prescribed
monodromy data $-\vec{\mu}$.  The 
vector $-\vec{\mu}$, as the monodromy data,   is
identified with the residue class
$$
-\vec{\mu} \mod r =
(-\mu_1\mod r,\dots,-\mu_n\mod r)
\in G^r
$$
 at each marked point. We fix a character
$$
G=\bZ/r\bZ \owns k \longmapsto
 e^{\frac{2\pi i k}{r}}
\in \bC^*.
$$
This defines a line bundle on 
$[C,(p_1,\dots,p_n)]\in \Mbar_{g,n}$, 
and the choice of the monodromy 
data $\vec{\mu}\in G^r$ determines a covering
$\tilde{C}\lrar C$ as a multi-section 
of this line bundle. All these data give a
point of the moduli stack 
$\Mbar_{g,-\vec{\mu}}(BG)$, and the Hodge `bundle'
$\bE$ is defined on it by assigning the fiber
$H^0\big(\tilde{C}, 
K_{\tilde{C}}
\big)$ to this point, where $K_{\tilde{C}}$ is the 
canonical sheaf. We then define
$$
\lam_j = c_j(\bE)\in H^{2j}\big(
\Mbar_{g,-\vec{\mu}}(BG),\bQ\big).
$$
The $\psi$-classes on $\Mbar_{g,-\vec{\mu}}(BG)$ are
the pull-back of the standard  tautological
cotangent classes
on $\Mbar_{g,n}$ via the natural 
forgetful morphism
$$
\Mbar_{g,-\vec{\mu}}(BG)\lrar \Mbar_{g,n}.
$$
\end{thm}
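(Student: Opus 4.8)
The plan is to realize the orbifold Hurwitz number as a virtual count on a moduli space of twisted relative stable maps, and to evaluate that count by Graber--Pandharipande virtual localization with respect to the natural $\bC^*$-action on the target that fixes the two distinguished points. Concretely, I would first reinterpret $H_{g,n}^{(r)}(\vec\mu)$ as a double Hurwitz number: a map $\varphi:C\lrar\bP^1[r]$ with profile $\vec\mu$ over $\infty$, the orbifold structure of type $[p/(\bZ/r\bZ)]$ over the stack point $0$ (equivalently, profile $(r,\dots,r)$ in the underlying coarse cover), and simple branching elsewhere, where the number of simple branch points is fixed by Riemann--Hurwitz to be $2g-2+n+m$ with $m=(\mu_1+\cdots+\mu_n)/r$. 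Through the relative Gromov--Witten theory of $\bP^1[r]$ relative to $\infty$ (or via the Fantechi--Pandharipande branch morphism), the automorphism-weighted count equals a degree, hence an integral of tautological classes over the appropriate moduli of twisted relative stable maps.

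The second step is to introduce the $\bC^*$-action scaling the coordinate of $\bP^1[r]$ and fixing $0$ and $\infty$, lift it to the moduli space, and apply virtual localization. The fixed loci are indexed by how the source curve is distributed between a part contracted over the stack point $0$ and a collection of rational tails carrying the ramification over $\infty$. The \emph{distinguished} fixed locus is the one in which all of the nontrivial geometry of the source sits over $0$: there the twisted curve contracts to $[0/(\bZ/r\bZ)]$, the stabilized map factors through the classifying space $BG$ with $G=\bZ/r\bZ$, and the fixed locus is identified with $\Mbar_{g,-\vec\mu}(BG)$, the $n$ orbifold marked points being the attaching nodes of the tails and carrying monodromy $-\mu_i\bmod r$. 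The remaining rational tails over $\infty$, each fully ramified of order $\mu_i$, contribute the cotangent-line smoothing factors at these nodes.

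The third step is the equivariant Euler-class computation of the virtual normal bundle along the distinguished locus. The $\bZ/r\bZ$-cover $\tilde C\lrar C$ determined by the fixed character carries the Hodge bundle $\bE=H^0\big(\tilde C,K_{\tilde C}\big)$, whose equivariant Euler class, evaluated against the weight of the action over $0$, produces the Hodge factor $\sum_{j\ge0}(-r)^j\lam_j$ with $\lam_j=c_j(\bE)$; the smoothing of the node joining the contracted locus to the $i$-th rational tail produces the denominator $1-\mu_i\psi_i$, where $\psi_i$ is the pullback of the tautological class under $\Mbar_{g,-\vec\mu}(BG)\lrar\Mbar_{g,n}$; and the local automorphisms together with the normalization of the orbifold structure over the two special fibers produce the multiplicities $\mu_i^{\lfloor\mu_i/r\rfloor}/\lfloor\mu_i/r\rfloor!$ and the overall power $r^{1-g+\sum_i\la\mu_i/r\ra}$. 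After checking that the non-distinguished loci either vanish under the degeneration or carry classes of the wrong dimension, the localization sum collapses onto the distinguished locus and yields formula \eqref{eq:ELSV}.

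The hard part will be the precise bookkeeping of the prefactors rather than the structural localization argument. Two sources of subtlety deserve the most care: first, the exact identification of $\bE$ and the scaling $(-r)^j$ in its equivariant Chern classes, which depends on tracking how the $\bZ/r\bZ$-character twists the normal directions of the cover; and second, the automorphism and gerbe factors producing $r^{1-g+\sum_i\la\mu_i/r\ra}$ together with the local data $\mu_i^{\lfloor\mu_i/r\rfloor}/\lfloor\mu_i/r\rfloor!$, where the floor and fractional parts of $\mu_i/r$ enter through the orders of isotropy at the marked points (the point being untwisted exactly when $r\mid\mu_i$). Matching these against the naive automorphism-weighted count of covers, and confirming that the piecewise-polynomial chamber structure of the double Hurwitz number is compatible with a single Hurwitz--Hodge integral over $\Mbar_{g,-\vec\mu}(BG)$, is where the genuine work lies.
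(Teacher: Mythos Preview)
The present paper does not prove this theorem at all: it is quoted verbatim from Johnson--Pandharipande--Tseng \cite{JPT} and used as input, so there is no in-paper argument to compare your sketch against. Your outline---realize $H_{g,n}^{(r)}(\vec\mu)$ as a relative/orbifold Gromov--Witten invariant of $\bP^1[r]$, apply $\bC^*$-equivariant virtual localization, identify the distinguished fixed locus with $\Mbar_{g,-\vec\mu}(BG)$, and read off the Hodge, $\psi$, and combinatorial factors from the virtual normal bundle---is indeed the strategy of \cite{JPT}, and the places you flag as delicate (the power of $r$, the gerbe/automorphism factors, and the identification of the twisted Hodge bundle giving the $(-r)^j\lam_j$) are exactly the ones that require care there. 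One point to tighten: in the JPT set-up the vanishing of the contributions from non-distinguished fixed loci is not automatic for dimension reasons but is arranged by a judicious choice of equivariant lift (or by a degeneration argument), so your parenthetical ``vanish under the degeneration or carry classes of the wrong dimension'' would need to be made precise rather than asserted.
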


The cut-and-join equation of orbifold
Hurwitz numbers
$H_{g,n}^{(r)}(\mu_1,\dots,\mu_n)$
is derived from the analysis of the geometric 
deformation of confluence of one of the simple 
ramification points with  $\infty\in \bP^1[r]$.
In terms of the monodromy data, the deformation
corresponds to multiplying a transposition to 
the product of $n$ disjoint cycles of 
type $(\mu_1,\dots,\mu_n)$ that determine
the ramification profile above $\infty$.
Therefore, the geometric situation in our
orbifold context does not
change from the usual simple Hurwitz number
case. As a result, the exact same proof of the 
original case (see for example, \cite{MZ} and
\cite{Zhu}) 
applies to establish the following. 

\begin{thm}[Cut-and-join equation]
The orbifold
Hurwitz numbers
$H_{g,n}^{(r)}(\mu_1,\dots,\mu_n)$
satisfy the following equation.
\begin{multline}
\label{eq:CAJ}
s H_{g,n}^{(r)}(\mu_1,\dots,\mu_n)
= \half\sum_{i\ne j} (\mu_i+\mu_j)
H_{g,n-1}^{(r)}
\left(
\mu_i+\mu_j,\mu_{[\hat{i},\hat{j}]}
\right)
\\
+\half\sum_{i=1}^n
\sum_{\a+\b=\mu_i}
\a\b 
\left[
H_{g-1,n+1}^{(r)}
\left(
\a,\b,\mu_{[\hat{i}]}
\right)
+\sum_{\substack{g_1+g_2=g\\
I\sqcup J=[\hat{i}]}}
H_{g_1,|I|+1}^{(r)}\big(\a,\mu_I\big)
H_{g_2,|J|+1}^{(r)}\big(\b,\mu_J\big)
\right].
\end{multline}
Here 
\begin{equation}
\label{eq:s}
s=s(g,\vec{\mu}) = 2g-2+n +
 \frac{\mu_1+\cdots+\mu_n}{r}
\end{equation}
 is the number of 
simple ramification point given by the
Riemann-Hurwitz formula, 
and we use the convention that for any
subset $I\subset [n] = \{1,2,\dots,n\}$, 
$\mu_I=(\mu_i)_{i\in I}$. The hat notation
$\hat{i}$ indicates that the index $i$ is removed. 
The last summation is over all 
partitions of $g$ and set partitions of
$[\hat{i}] = \{1, \dots, i-1,i+1,\dots,n\}$. 
\end{thm}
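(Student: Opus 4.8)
The plan is to prove \eqref{eq:CAJ} entirely at the level of the monodromy (Hurwitz) data, using the fact—stressed in the paragraph preceding the statement—that the relevant deformation is supported near $\infty\in\bP^1[r]$ and leaves the orbifold structure over $0$ untouched. First I would record the Hurwitz correspondence in the orbifold setting: a connected orbifold morphism $\varphi:C\lrar\bP^1[r]$ of the prescribed type is the same datum, up to isomorphism, as a transitive tuple $(\sigma_0,\tau_1,\dots,\tau_s,\sigma_\infty)$ in the symmetric group $S_d$ with $d=\mu_1+\cdots+\mu_n$, satisfying $\sigma_0\tau_1\cdots\tau_s\sigma_\infty=1$, where $\sigma_\infty$ has \emph{labeled} cycle type $\vec\mu$ (the ordered poles), $\sigma_0$ has cycle type $(r^m)$ with $m=(\mu_1+\cdots+\mu_n)/r$ recording the degree-$r$ ramification over the stacky point, and each $\tau_k$ is a transposition recording a simple branch point. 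Under this correspondence $H_{g,n}^{(r)}(\vec\mu)$ is the automorphism-weighted count of such tuples modulo simultaneous conjugation and the relabeling of the unordered transpositions, and the orbifold Riemann--Hurwitz formula gives exactly $s=2g-2+n+(\mu_1+\cdots+\mu_n)/r$.

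The heart of the argument is to distinguish one of the $s$ transpositions—this is what produces the overall factor $s$ on the left—and to analyze the reduced datum obtained by deleting it, which has $s-1$ transpositions and monodromy $\tau\sigma_\infty$ over $\infty$. I would then group the weighted sum according to the reduced cover and count, for each admissible reduced datum, the transpositions $\tau$ whose reinsertion restores the labeled profile $\vec\mu$. If the distinguished $\tau=(a\,b)$ joins two distinct cycles of $\sigma_\infty$, of lengths $\mu_i$ and $\mu_j$, the reduced datum has these merged into a single cycle of length $\mu_i+\mu_j$, with $n-1$ labeled poles and unchanged genus $g$; the number of transpositions splitting that length-$(\mu_i+\mu_j)$ cycle back into labeled cycles of lengths $\mu_i$ and $\mu_j$ is $\mu_i+\mu_j$, which after the $i\leftrightarrow j$ symmetry gives the term $\half\sum_{i\ne j}(\mu_i+\mu_j)H_{g,n-1}^{(r)}(\mu_i+\mu_j,\mu_{[\hat i,\hat j]})$. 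If instead $a$ and $b$ lie in the same cycle of $\sigma_\infty$, of length $\mu_i$, the reduced datum has that cycle split into two cycles of lengths $\a$ and $\b$ with $\a+\b=\mu_i$, and the number of transpositions rejoining a given ordered pair of such cycles is $\a\b$, producing the coefficient $\a\b$ of the second family of terms. Throughout, $\sigma_0$ and the stacky ramification over $0$ are pure spectators: deleting $\tau$ does not move the preimage of the orbifold point, so the cycle type $(r^m)$ and the twisted structure of $C$ are preserved verbatim, which is precisely the sense in which the orbifold case reduces combinatorially to the ordinary one.

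It then remains to carry out the connectedness bookkeeping in the split case. A short argument shows that deleting a transposition from a transitive tuple yields at most two orbits: the transposition moves only $a$ and $b$, so the reduced group is either still transitive or has exactly the two orbits through $a$ and $b$. In the merge case the reduced datum stays transitive, so only the connected contribution $H_{g,n-1}^{(r)}$ appears. In the split case, transitivity is either preserved—giving a connected cover of genus $g-1$ with $n+1$ poles, hence $\half\sum_i\sum_{\a+\b=\mu_i}\a\b\,H_{g-1,n+1}^{(r)}(\a,\b,\mu_{[\hat i]})$—or broken into two components, one carrying the part $\a$, a subset $I\subset[\hat i]$ and genus $g_1$, the other carrying $(\b,J,g_2)$. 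Additivity of Euler characteristic across the splitting forces $g_1+g_2=g$ and $I\sqcup J=[\hat i]$, the degree and stacky data split compatibly, and the stability constraints $2g_1-2+|I|\ge 0$, $2g_2-2+|J|\ge 0$ single out the stable summands, reproducing the product term. The two factors of $\half$ come from the ordered sums $\sum_{i\ne j}$ and $\sum_{\a+\b=\mu_i}$ together with the $(\a,I,g_1)\leftrightarrow(\b,J,g_2)$ symmetry of the split components.

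The step I expect to be the genuine obstacle is not the symmetric-group combinatorics, which is classical and identical to the arguments of \cite{MZ,Zhu}, but the rigorous justification of the spectator claim in the twisted/orbifold category: one must check that the confluence of a simple branch point with $\infty$ lifts to a one-parameter degeneration in the moduli of twisted stable maps to $\bP^1[r]$ that never interacts with the stacky point over $0$, so that the monodromy deletion above is the honest shadow of a geometric deformation and the automorphism weights transform consistently under the regrouping. Once this compatibility is in place—either by adapting the admissible-cover degeneration of the non-orbifold case or by invoking the gluing and forgetful structure on $\Mbar_{g,-\vec\mu}(BG)$ underlying the ELSV-type formula \eqref{eq:ELSV}—the weighted counts assemble into exactly \eqref{eq:CAJ}.
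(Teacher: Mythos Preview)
Your approach is exactly the one the paper adopts: the paper does not give a detailed argument but simply observes that the deformation takes place entirely near $\infty$, so the monodromy $\sigma_0$ of type $(r^m)$ over the stacky point is a spectator and the symmetric-group analysis of \cite{MZ,Zhu} applies verbatim. Your write-up is a faithful and more explicit version of that sketch.

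One correction, however: in the disconnected split case you impose stability constraints $2g_1-2+|I|\ge 0$ and $2g_2-2+|J|\ge 0$ on the product term. This is wrong for \eqref{eq:CAJ} as stated---the sum there ranges over \emph{all} partitions $g_1+g_2=g$ and $I\sqcup J=[\hat{i}]$, including the unstable pieces $(g_1,|I|+1)\in\{(0,1),(0,2)\}$. Geometrically these arise when one of the two components after deleting $\tau$ is a rational curve carrying only $\alpha$ (and possibly one further $\mu_j$) as a pole; such covers exist and are counted by $H_{0,1}^{(r)}(\alpha)$ and $H_{0,2}^{(r)}(\alpha,\mu_j)$. These unstable summands are precisely what gets absorbed, in the later passage from \eqref{eq:CAJ} to the differential recursion \eqref{eq:diffrecursion}, into the Euler operator on the left and into the first line on the right. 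So drop the stability restriction here; otherwise the proof is fine.
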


\section{The infinite framing limit of the orbifold topological vertex}
\label{sect:vertex}

The realization that generating functions for simple Hurwitz numbers satisfy the Eynard-Orantin recursion for the Lambert curve \eqref{eq:Lambert} originated from topological string theory. More precisely, the argument put forward in \cite{BM} was that generating functions for simple Hurwitz numbers can be obtained in the infinite framing limit of the topological vertex generating functions in open Gromov-Witten theory. The remodeling conjecture of \cite{BKMP} then asserts that the topological vertex generating functions should satisfy the Eynard-Orantin recursion for the framed curve mirror to $\mathbb{C}^3$, whose infinite framing limit is precisely the Lambert curve. Hence, it follows from the remodeling conjecture that generating functions for simple Hurwitz numbers should also satisfy the Eynard-Orantin recursion for the limiting curve, that is, the Lambert curve \eqref{eq:Lambert}. In the context of simple Hurwitz numbers, this conjecture has been proved in \cite{BEMS,EMS}, and the remodeling conjecture for the topological vertex has also been proved in \cite{Zhou3} following similar methods.

In this section, we argue that there exists a similar story for orbifold Hurwitz numbers. We show that generating functions for orbifold Hurwitz numbers can be obtained in the infinite framing limit of the orbifold topological vertex generating functions in open orbifold Gromov-Witten theory. By the remodeling conjecture, the latter are expected to satisfy the Eynard-Orantin recursion for the curve mirror to the orbifolds. We show that the infinite framing limit of these curves reproduce the $r$-Lambert curve \eqref{eq:rLamb}, therefore suggesting that generating functions for orbifold Hurwitz numbers should satisfy the Eynard-Orantin recursion for the $r$-Lambert curve. We will prove this result in section \ref{sect:EO}.

\subsection{Open orbifold Gromov-Witten theory}

\subsubsection{The geometry}

We consider the toric Calabi-Yau orbifold $ X = [\mathbb{C}^3 / (\mathbb{Z} / r \mathbb{Z})]$, where $\mathbb{Z} / r \mathbb{Z}$ acts on the three complex coordinates of $\mathbb{C}^3$ as:
\begin{equation}\label{eq:action}
(z_1, z_2, z_3) \mapsto (\alpha z_1, \alpha^s z_2, \alpha^{-s-1} z_3), \qquad \alpha = \exp\left( \frac{2 \pi i}{r} \right), \qquad s \in \mathbb{Z}.
\end{equation}
The rays for the fan of $X$ can be taken to be:
\begin{equation}\label{eq:rays}
\begin{pmatrix}0\\0\\1 \end{pmatrix}, \qquad\begin{pmatrix}0\\1\\1 \end{pmatrix},  \qquad \begin{pmatrix}r\\-s\\1 \end{pmatrix}.
\end{equation}
The \emph{fan triangulation} of $X$ is the intersection of its fan with the plane at $z=1$, which is shown in red in figure \ref{fig:fan}. Its dual diagram is the \emph{toric diagram} (or \emph{web diagram}) of $X$, which is shown in blue. For a good pedagogical introduction to web diagrams and fan triangulations of toric Calabi-Yau orbifolds, see for instance Appendix B in \cite{BCY}.

\begin{figure}[htb]
\centerline{\epsfig{file=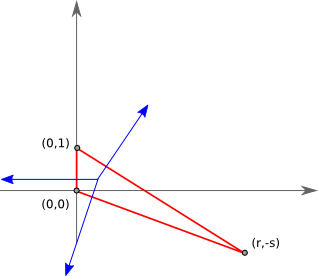,width=3in}}
\caption{The fan triangulation (in red) and toric diagram (in blue) for $X =[\mathbb{C}^3 / (\mathbb{Z} / r \mathbb{Z})]$, with the action of  $\mathbb{Z} / r \mathbb{Z}$ specified by \eqref{eq:action}. The fan triangulation is the Newton polygon for the curve mirror to $X$.}
    \label{fig:fan}
\end{figure}

\subsubsection{Open orbifold Gromov-Witten invariants}

We are interested in open orbifold Gromov-Witten theory with target space $X$. Open Gromov-Witten invariants provide a virtual count of stable maps from Riemann surfaces with boundaries to a target space $X$. In addition to $X$, one must specify a Lagrangian submanifold $L \subset X$ where the boundary of the domain curve is required to lie. We choose our Lagrangian submanifold to be as constructed originally in \cite{AKMV, AV1}, intersecting the $z_1$ coordinate axis of $X$. In the language of \cite{Ross}, we are studying the ``effective one-leg $\mathbb{Z} / r \mathbb{Z}$ orbifold topological vertex'': one-leg because we consider only one Lagrangian submanifold for the boundary condition, and effective because our Lagrangian submanifold intersects the $z_1$ leg of $X$, where the action of $\mathbb{Z} / r \mathbb{Z}$ is effective. This is known as the orbifold topological vertex, because this type of geometry provides a building block that can be used to construct open/closed Gromov-Witten theory for any toric Calabi-Yau orbifolds, just as the original topological vertex of \cite{AKMV} is the building block to construct open/closed Gromov-Witten theory of toric Calabi-Yau manifolds.

We will not give a precise definition of open Gromov-Witten theory here; we refer the interested reader to \cite{BC,KL,LLLZ,Ross}. Roughly speaking, in \cite{KL}, Katz and Liu were the first to construct a tangent/obstruction theory for the moduli space of open stable maps to toric Calabi-Yau manifolds. The construction was generalized to orbifolds in \cite{BC}, and then in full generality by Ross in \cite{Ross}. An important point is that the moduli theory is only defined via localization with respect to a torus action on the moduli space, induced from a torus action on the target space $X$. There is a choice of weights involved in the choice of torus action on the target space $X$, and it turns out that open Gromov-Witten invariants do depend on this choice of weight. More precisely, they depend on a residual integer $f \in \mathbb{Z}$, which is known as the \emph{framing} of the open Gromov-Witten invariants (in fact, in the context of orbifolds, $f \in \frac{1}{r} \mathbb{Z}$). To make contact with the notation of \cite{BC,Ross}, here we choose the weights for the torus action with respect to which we localize to be
\begin{equation}\label{eq:weights}
\left(\frac{\hbar}{r},  f \hbar, - f \hbar - \frac{\hbar}{r} \right),
\end{equation}
just as in \cite{BC}.\footnote{Note that our $f$ has a minus sign difference with \cite{BC}, which is consistent with the framing that we will introduce for the mirror curve later on.}
In the non-equivariant limit in which we will evaluate Gromov-Witten invariants, we set $\hbar = 1$.

After localization, open Gromov-Witten theory becomes a theory of stable maps $\varphi: \Sigma \to X$,
 where $\Sigma$ is a compact genus $g$ Riemann surface with $n$ disks attached at $n$ (possibly $k$-twisted) distinct nodes. The map $\varphi$ contracts the compact component to the origin of the target orbifold $X$, while the $n$ boundaries of the disks are mapped to the Lagrangian submanifold $L$. Each disk is mapped with a given winding number $\mu_i \in \mathbb{Z}$, $i=1,\ldots, n$. Thus, the data encoding a map $\varphi$ is the genus $g$ of the  domain curve, a partition $\vec{\mu}$ of length $\ell(\mu) = n$ specifying the winding numbers of the disks, and a vector $\vec{k}$ of integers $0 < k_i \leq r$ specifying the twisting of the attachment points.

In fact, as shown in \cite{BC}, for the theory to be $k$-twisted equivariant, the twisting vector $\vec{k}$ is not independent from the winding numbers $\vec{\mu}$: we must require that
\begin{equation}
\mu_i \equiv k_i \text{ mod $r$},
\end{equation}
which fully specifies $\vec{k}$ in terms of $\vec{\mu}$.

\begin{rem}
We remark here that we do not allow insertions, that is, stacky points on the compact components of the domain curves, aside from the attachment points of the disks. It would be interesting to study the Eynard-Orantin recursion for the orbifold topological vertex with insertions, and its infinite framing limit. We hope to report on that in the near future.
\end{rem}

\subsubsection{The orbifold topological vertex}

Under the assumptions described above, we can construct the effective one-leg orbifold topological vertex $V^{(r,s)}_{g,n}(\vec{\mu};f)$, which computes the open orbifold Gromov-Witten invariants of $X$ from genus $g$ domain curves with $n$ disks with winding numbers specified by the partition $\vec{\mu}$. We form orbifold topological vertex generating functions:
\begin{equation}\label{eq:genGW}
G^{(r,s)}_{g,n}[x_1,\ldots,x_n;f] = \sum_{\vec{\mu} \in \mathbb{Z}^n_+} V^{(r,s)}_{g,n}(\vec{\mu};f) \prod_{i=1}^n x_i^{\mu_i}. 
\end{equation}

One of the main results of \cite{BC, Ross} is that the orbifold topological vertex $V^{(r,s)}_{g,n}(\vec{\mu};f)$ has an explicit formula in terms of Hurwitz-Hodge integrals over the moduli space $\Mbar_{g,-\vec{\mu}}(B G)$, with $G = \mathbb{Z} / r \mathbb{Z}$. More precisely, in the non-equivariant limit, from the work of \cite{BC,Ross,Zhong} we extract the following formula for the orbifold topological vertex described above:\footnote{Note that the overall sign in $V^{(r,s)}_{g,n}(\vec{\mu};f)$ differs from \cite{BC,Ross}; as mentioned in \cite{BC}, there are ambiguities with minus signs in open Gromov-Witten theory. Here, we fixed the overall minus sign such that it is consistent with the infinite framing limit that we will study. It would be interesting to investigate this issue of minus signs further.}
\begin{align}
V^{(r,s)}_{g,n}(\vec{\mu};f) =& (-1)^{g-1 + \sum_{i=1}^n \langle - \frac{\mu_i(s+1)}{r} \rangle} r^{n -  \sum_{i=1}^n \delta_{\langle\frac{\mu_i}{r}\rangle,0}} (f)^{\sum_{i=1}^n\delta_{\langle \frac{\mu_i s}{r}\rangle,0}} \left( -f - \frac{1}{r} \right)^{\sum_{i=1}^n \delta_{\langle - \frac{(s+1)\mu_i}{r} \rangle,0}} \nonumber\\
&\times \prod_{i=1}^n \left( \frac{1}{\mu_i^{\lfloor \frac{\mu_i}{r} + \langle \frac{\mu_i s}{r} \rangle - \frac{1}{t_{\rm eff}} \rfloor - \lfloor \frac{\mu_i}{r} \rfloor} \lfloor \frac{\mu_i}{r} \rfloor !} \prod_{j=1}^{\lfloor \frac{\mu_i}{r} + \langle \frac{\mu_i s}{r} \rangle - \frac{1}{t_{\rm eff}}\rfloor} \left(  f \mu_i - \langle \frac{\mu_i s}{r} \rangle + j \right) \right) \nonumber\\
& \times \int_{\Mbar_{g,-\vec{\mu}}(B G)} \frac{\Lambda_g^{\vee, \alpha}\left(\frac{1}{r}\right)\Lambda_g^{\vee, \alpha^s}\left(f\right)\Lambda_g^{\vee, \alpha^{-s-1}}\left(-f-\frac{1}{r}\right)}{\prod_{i=1}^n ( 1- \mu_i \psi_i)},
\label{eq:orbTV}
\end{align}
where we used the Kronecker delta symbol notation:
\begin{equation}
\delta_{t,0} = \begin{cases} 0 \text{ if } t \neq 0, \\
1 \text{ if } t = 0,\end{cases}
\end{equation}
and we defined a rational number
\begin{equation}
t_{\rm eff} := \frac{r}{\text{gcd}(\mu_i,r)}.
\end{equation}
We also used the notation:
\begin{equation}
\Lambda_g^{\vee,\alpha^k} (u) = u^{\text{rk}( \mathbb{E}_{\alpha^k})} \sum_{i=0}^{\text{rk} (\mathbb{E}_{\alpha^k})} \left( -\frac{1}{u} \right)^i
 \lambda_{i,\alpha^k},
\end{equation}
where $\mathbb{E}_{\alpha^k}$ is the Hodge bundle corresponding to the representation of $\mathbb{Z} / r \mathbb{Z}$ given by
\begin{equation}
\phi_{\alpha^k}: \mathbb{Z} / r \mathbb{Z} \to \mathbb{C}^*, \qquad \phi_{\alpha^k}(1) ={\alpha^k} = \exp \left( \frac{2 \pi i k }{r} \right),
\end{equation}
and 
\begin{equation}
 \lambda_{i,\alpha^k} = c_i \left(\mathbb{E}_{\alpha^k} \right)
\end{equation}
are its Chern classes.

\subsection{The infinite framing limit of the generating functions}

With this explicit formula for the orbifold topological vertex, we can study the limit of the generating functions \eqref{eq:genGW} when the framing $f$ goes to infinity. What we show is that the infinite framing limit of the orbifold topological vertex reproduces precisely the orbifold Hurwitz numbers defined previously.

\begin{thm}\label{thm:limit}
Consider the generating functions $G^{(r,s)}_{g,n}[x_1,\ldots,x_n;f]$ defined in \eqref{eq:genGW}, with the orbifold topological vertex $V^{(r,s)}_{g,n}(\vec{\mu};f)$ given by \eqref{eq:orbTV}. Then:
\begin{equation}
\lim_{f \to \infty} \left((-1)^n f^{2-2g-n} G^{(r,s)}_{g,n}\left[\frac{x_1}{f^{1/r}},\ldots,\frac{x_n}{f^{1/r}};f \right] \right) = F_{g,n}^{(r)}[x_1, \ldots, x_n],
\end{equation}
where $F_{g,n}^{(r)}[x_1, \ldots, x_n]$ is the generating functions for orbifold Hurwitz numbers defined in \eqref{eq:Fgn in x}, with the orbifold Hurwitz numbers $H_{g,n}^{(r)}(\vec{\mu})$ satisfying \eqref{eq:ELSV}. 
\end{thm}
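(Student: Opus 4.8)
The plan is to prove the identity coefficient by coefficient in the monomials $\prod_{i=1}^n x_i^{\mu_i}$, exploiting the fact that both generating functions are built out of Hurwitz--Hodge integrals over the same moduli space $\Mbar_{g,-\vec\mu}(BG)$. Extracting the coefficient of $\prod_i x_i^{\mu_i}$ from \eqref{eq:genGW}, the rescaling $x_i\mapsto x_i/f^{1/r}$ contributes a factor $f^{-\mu_i/r}$, so the theorem is equivalent to the $\vec\mu$-wise statement
\begin{equation*}
\lim_{f\to\infty}(-1)^n\, f^{\,2-2g-n-\frac1r\sum_i\mu_i}\,V^{(r,s)}_{g,n}(\vec\mu;f)=H^{(r)}_{g,n}(\vec\mu).
\end{equation*}
Inspecting \eqref{eq:orbTV}, each of its three factors is a \emph{polynomial} in $f$: the sign/power prefactor contains $f^{\sum_i\delta_{\langle\mu_is/r\rangle,0}}$ and $(-f-\tfrac1r)^{\sum_i\delta_{\langle-(s+1)\mu_i/r\rangle,0}}$, the middle product contains $\prod_{j}(f\mu_i-\langle\mu_is/r\rangle+j)$, and the integrand contains $\Lambda_g^{\vee,\alpha^s}(f)$ and $\Lambda_g^{\vee,\alpha^{-s-1}}(-f-\tfrac1r)$. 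Thus $V^{(r,s)}_{g,n}(\vec\mu;f)$ is a polynomial in $f$, and the limit reduces to two things: (i) its degree in $f$ equals $2g-2+n+\frac1r\sum_i\mu_i$, and (ii) its top-degree coefficient, times $(-1)^n$, equals $H^{(r)}_{g,n}(\vec\mu)$. Once this holds for each $\vec\mu$, the statement for the full analytic generating functions follows using the convergence of \eqref{eq:Fgn in x}.

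The heart of (ii) is to show that the three-$\Lambda$ integrand collapses, in the limit, to the single Chern-polynomial integrand of the JPT formula \eqref{eq:ELSV}. Writing $\Lambda_g^{\vee,\alpha^k}(u)=\sum_{i\ge0}(-1)^i u^{\,\mathrm{rk}(\bE_{\alpha^k})-i}\lambda_{i,\alpha^k}$, the two factors $\Lambda_g^{\vee,\alpha^s}(f)$ and $\Lambda_g^{\vee,\alpha^{-s-1}}(-f-\tfrac1r)$ have top $f$-degree coefficient $\lambda_{0,\cdot}=1$, while every higher Chern class $\lambda_{i,\alpha^s},\lambda_{j,\alpha^{-s-1}}$ with $i,j>0$ is suppressed by $f^{-i},f^{-j}$. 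Hence, after dividing by the leading power of $f$ and letting $f\to\infty$, only the $i=j=0$ terms survive from these two factors, and the sole surviving Chern-class content is carried by the $f$-independent factor $\Lambda_g^{\vee,\alpha}(\tfrac1r)=(\tfrac1r)^{\mathrm{rk}(\bE_\alpha)}\sum_{i\ge0}(-r)^i\lambda_{i,\alpha}$. Since the character fixed in Theorem~\ref{thm:JPT} is exactly $\alpha=e^{2\pi i/r}$, we have $\bE_\alpha=\bE$ and $\lambda_{i,\alpha}=\lambda_i$, so this reproduces precisely the numerator $\sum_{i\ge0}(-r)^i\lambda_i$ of \eqref{eq:ELSV} up to the scalar $r^{-\mathrm{rk}(\bE_\alpha)}$.

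What remains --- and where the real work lies --- is the bookkeeping of powers of $f$, powers of $r$, signs, and combinatorial factors. For this I would invoke a Chevalley--Weil-type formula for the ranks of the eigen-Hodge-bundles, $\mathrm{rk}(\bE_{\alpha^k})=g-1+\sum_{i=1}^n\langle -k\mu_i/r\rangle$ for $k\not\equiv 0$ (in the normalization dictated by the monodromy $-\vec\mu$), to evaluate the $f$-degrees coming from the integral; combine them with the explicit $f$-degrees of the prefactor (using $\prod_{j=1}^{N_i}(f\mu_i-\langle\mu_is/r\rangle+j)\sim(f\mu_i)^{N_i}$ with $N_i=\lfloor\mu_i/r+\langle\mu_is/r\rangle-\gcd(\mu_i,r)/r\rfloor$) and the overall scaling exponent $2-2g-n-\frac1r\sum_i\mu_i$; and check that the total vanishes, so that the degree in (i) is exactly right and the limit is finite and nonzero. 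Matching the leading coefficients then produces the JPT prefactor $r^{\,1-g+\sum_i\langle\mu_i/r\rangle}\prod_i\mu_i^{\lfloor\mu_i/r\rfloor}/\lfloor\mu_i/r\rfloor!$ together with the signs.

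The main obstacle I anticipate is twofold. First, the $f$-degree cancellation and the coefficient matching require a collection of elementary but delicate identities among the fractional parts $\langle s\mu_i/r\rangle$, $\langle(s+1)\mu_i/r\rangle$, $\langle\mu_i/r\rangle$ and the associated Kronecker deltas; these are exactly the identities that reconcile the chamber-dependent combinatorics of the vertex with the JPT formula. Second, and most strikingly, the answer must be \emph{independent of the weight parameter $s$}, even though $V^{(r,s)}_{g,n}$ depends on $s$ throughout: all $s$-dependence must conspire to cancel in the limit. Verifying this cancellation --- that the surviving data depends only on $\langle\mu_i/r\rangle$ through the $\bE_\alpha$-factor and the $r^{1-g+\sum_i\langle\mu_i/r\rangle}$ prefactor --- is the crux of the argument and the step most likely to require care.
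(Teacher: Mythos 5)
Your proposal is essentially the paper's own proof: the paper likewise expands \eqref{eq:orbTV} factor-by-factor at large $f$, computes via orbifold Riemann--Roch the same rank formulas $\mathrm{rk}(\mathbb{E}_{\alpha^{s}}) = g-1+\sum_i\langle -\mu_i s/r\rangle$ and $\mathrm{rk}(\mathbb{E}_{\alpha^{-s-1}}) = g-1+\sum_i\langle \mu_i(s+1)/r\rangle$ that you quote, keeps only the top ($\lambda_0$) terms of the two framing-dependent $\Lambda$-factors so that $\Lambda_g^{\vee,\alpha}(1/r)$ alone supplies the Chern content of the JPT numerator, and then matches the exponents of $f$ and $r$ and the overall sign. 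The bookkeeping you defer as the crux is exactly the body of the paper's proof, and it does close: the key identity $\lfloor -\mu_i(s+1)/r\rfloor + \lfloor \mu_i(s+1)/r - \frac{1}{t_{\rm eff}}\rfloor = -1$ (valid whether or not $\mu_i(s+1)/r\in\mathbb{Z}$) collapses the $f$-exponent to $2g-2+n+\frac{1}{r}\sum_i\mu_i$ and cancels all $s$-dependence, which are precisely the two cancellations you flagged.
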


\begin{proof}
Let us consider the leading order term in a large $f$ expansion of the orbifold topological vertex $V^{(r,s)}_{g,n}(\vec{\mu};f)$ in \eqref{eq:orbTV}. Let us consider the Hurwitz-Hodge integral first. In the large $f$ limit, it is easy to see that
\begin{equation}
\Lambda_g^{\vee, \alpha^s}\left(f\right)\Lambda_g^{\vee, \alpha^{-s-1}}\left(-f-\frac{1}{r}\right) \simeq (f)^{\text{rk}(\mathbb{E}_{\alpha^{s}})}(- f)^{\text{rk}(\mathbb{E}_{\alpha^{-s-1}})},
\end{equation}
since all other terms will be suppressed by powers of $1/f$. We can compute the rank of the Hodge bundles over $\Mbar_{g,-\vec{\mu}}(B G)$ using orbifold Riemann-Roch. We get that\footnote{To be precise, we should consider separately the cases when the moduli space has a component with trivial monodromy (see for instance \cite{JPT}). But since the same formulae are valid in the end, for the sake of clarity we will not treat these cases separately.}
\begin{align}
\text{rk}(\mathbb{E}_{\alpha^{s}} )=& g - 1 + \sum_{i=1}^n \langle -\frac{ \mu_i s}{r} \rangle, \\
\text{rk}(\mathbb{E}_{\alpha^{-s-1}}) =& g - 1 + \sum_{i=1}^n \langle \frac{ \mu_i (s+1)}{r} \rangle.
\end{align}
Moreover, we can write
\begin{equation}
\Lambda_g^{\vee, \alpha}\left(\frac{1}{r}\right) = r^{-\text{rk}(\mathbb{E}_{\alpha} )} \sum_{i=0}^{\text{rk} (\mathbb{E}_{\alpha})} \left( - r \right)^i  \lambda_{i,\alpha},
\end{equation}
and we compute
\begin{equation}
\text{rk}(\mathbb{E}_{\alpha} )  = g - 1 + \sum_{i=1}^n \langle -\frac{ \mu_i}{r} \rangle.
\end{equation}
Thus, the Hurwitz-Hodge integral in the third line of \eqref{eq:orbTV} has the following leading order term in a large $f$ expansion:
\begin{equation}
r^{1-g- \sum_{i=1}^n \langle -\frac{ \mu_i}{r} \rangle} (-1)^{g-1 + \sum_{i=1}^n \langle \frac{ \mu_i (s+1)}{r} \rangle} f^{2g-2 + \sum_{i=1}^n \left(\langle -\frac{ \mu_i s}{r} \rangle+  \langle \frac{ \mu_i (s+1)}{r} \rangle\right)}
\int_{\Mbar_{g,-\vec{\mu}}(B G)} \frac{\sum_{j \geq 0} (-r)^j \lambda_{j,\alpha} }{\prod_{i=1}^n ( 1- \mu_i \psi_i)}.
\end{equation}

The first line of \eqref{eq:orbTV} has leading order term given by
\begin{equation}
 (-1)^{g-1 + \sum_{i=1}^n \langle - \frac{\mu_i(s+1)}{r} \rangle}r^{n -  \sum_{i=1}^n \delta_{\langle\frac{\mu_i}{r}\rangle,0}} f^{\sum_{i=1}^n\delta_{\langle \frac{\mu_i s}{r}\rangle,0}} (-f)^{\sum_{i=1}^n \delta_{\langle - \frac{(s+1)\mu_i}{r} \rangle,0}}.
\end{equation}
As for the second line in \eqref{eq:orbTV}, the leading order term is
\begin{equation}
\prod_{i=1}^n \left( \frac{\mu_i^{\lfloor \frac{\mu_i}{r} \rfloor}}{\lfloor \frac{\mu_i}{r} \rfloor !}f^{ \lfloor \frac{\mu_i}{r} + \langle \frac{\mu_i s}{r} \rangle - \frac{1}{t_{\rm eff}}\rfloor}\right)
\end{equation}

To get our final answer we must combine these three lines together. For the exponent of the overall factor of $f$, we notice that
\begin{align}
&2g-2\sum_{i=1}^n \left(\langle -\frac{ \mu_i s}{r} \rangle+  \langle \frac{ \mu_i (s+1)}{r} \rangle +\delta_{\langle \frac{\mu_i s}{r}\rangle,0}+\delta_{\langle - \frac{(s+1)\mu_i}{r} \rangle,0}+ \lfloor \frac{\mu_i}{r} + \langle \frac{\mu_i s}{r} \rangle - \frac{1}{t_{\rm eff}}\rfloor \right) \nonumber\\
=& 2g-2+n + \sum_{i=1}^n \left(1 -  \langle \frac{ \mu_i s}{r} \rangle-  \langle \frac{ -\mu_i (s+1)}{r} \rangle+ \lfloor \frac{\mu_i}{r} + \langle \frac{\mu_i s}{r} \rangle - \frac{1}{t_{\rm eff}}\rfloor \right) \nonumber\\
=& 2g-2+n + \sum_{i=1}^n \left(1 + \frac{\mu_i}{r} + \lfloor - \frac{\mu_i (s+1)}{r} \rfloor + \lfloor \frac{\mu_i(s+1)}{r} - \frac{1}{t_{\rm eff}} \rfloor \right) \nonumber\\
=& 2g-2+n + \sum_{i=1}^n \frac{\mu_i}{r}.
\end{align}
The last equality follows because:
\begin{equation}
 \lfloor - \frac{\mu_i (s+1)}{r} \rfloor + \lfloor \frac{\mu_i(s+1)}{r} - \frac{1}{t_{\rm eff}} \rfloor =
\begin{cases}
 \lfloor  - \frac{1}{t_{\rm eff}} \rfloor =\lfloor  - \frac{\text{gcd}(\mu_i,r)}{r} \rfloor =  -1 & \text{for $\frac{\mu_i(s+1)}{r} \in \mathbb{Z}$},\\
-1 + \lfloor \langle \frac{\alpha_i (s+1)}{t_{\rm eff}} \rangle - \frac{1}{t_{\rm eff}}\rfloor  = -1 &\text{for $\frac{\mu_i(s+1)}{r} \notin \mathbb{Z}$,}
\end{cases}
\end{equation}
where  $\alpha_i := \frac{\mu_i t_{\rm eff}}{r} = \frac{\mu_i}{\text{gcd}(\mu_i,r)}\in \mathbb{Z}$.

As for the exponent of the factor in $r$, we get
\begin{align}
1-g+n-\sum_{i=1}^n \left( \langle -\frac{ \mu_i}{r} \rangle+ \delta_{\langle\frac{\mu_i}{r}\rangle,0} \right)
=& 1-g+n - \sum_{i=1}^n \left( 1 - \langle \frac{ \mu_i}{r} \rangle \right) \nonumber\\
=&1-g+ \sum_{i=1}^n \langle \frac{ \mu_i}{r} \rangle.
\end{align}
Finally, the overall minus sign has exponent:
\begin{equation}
2g-2+ \sum_{i=1}^n \left(\langle- \frac{\mu_i(s+1)}{r} \rangle + \langle \frac{\mu_i(s+1)}{r} \rangle+\delta_{\langle - \frac{(s+1)\mu_i}{r} \rangle,0}\right) = 2g-2+n.
\end{equation}
Putting these together, we obtain that the leading term of \eqref{eq:orbTV} as $f$ is large is
\begin{equation}
(-1)^n f^{2g - 2 + n + \sum_{i=1}^n \frac{\mu_i}{r}} H_{g,n}^{(r)}(\vec{\mu}),
\end{equation}
where the orbifold Hurwitz numbers $H_{g,n}^{(r)}(\vec{\mu})$ are defined in \eqref{eq:ELSV}. Therefore, it follows that the generating functions satisfy
\begin{equation}
\lim_{f \to \infty} \left((-1)^n f^{2-2g-n} G^{(r,s)}_{g,n}\left[\frac{x_1}{f^{1/r}},\ldots,\frac{x_n}{f^{1/r}};f \right] \right) = F_{g,n}^{(r)}[x_1, \ldots, x_n].
\end{equation}
\end{proof}

\subsection{The remodeling conjecture and the Eynard-Orantin recursion}

An interesting implication of the infinite framing limit studied in the previous subsection is the existence of a recursive structure for orbifold Hurwitz numbers, which follows from the remodeling conjecture of \cite{BKMP}.

The remodeling conjecture asserts that the differentials $d_1 d_2 \cdots d_n G^{(r,s)}_{g,n}\left[x_1, \ldots, x_n;f \right] $ can be resummed as symmetric differential forms living on the complex curve mirror to the orbifold $X$, and that they satisfy the Eynard-Orantin recursion (which was defined in the introduction) for this particular spectral curve. Through the infinite framing limit of the generating functions, this conjecture implies that the generating functions for orbifold Hurwitz numbers should also satisfy the Eynard-Orantin recursion, with spectral curve given by the $r$-Lambert curve.

Recall that the mirror curve to the orbifold $X$ can be read off directly from the fan triangulation of $X$. Indeed, the fan triangulation is the Newton polygon of the mirror curve. In the case of the orbifold $X = [ \mathbb{C}^3/ (\mathbb{Z} / r\mathbb{Z})]$ that we studied in this section, the fan triangulation of $X$, shown in figure \ref{fig:fan}, has vertices $(0,0), (0,1)$ and $(r,-s)$, corresponding to the monomials $1$, $y$ and $x^r y^{-s}$. Therefore, the mirror curve can be written as:
\begin{equation}
C: \{ 1 - y - x^r y^{-s} = 0 \} \subset (\mathbb{C}^*)^2.
\end{equation}
Note that in writing the mirror curve, we have chosen a particular parameterization (in the language of toric geometry, we chose a particular set of rays, \eqref{eq:rays}, for the fan of $X$). We claim that this particular choice of parameterization should correspond to a Lagrangian submanifold intersecting the $z_1$ leg of the orbifold $X$ (we refer the reader to \cite{BKMP,BKMP2,BC,BT} for more on this).

To introduce framing for the mirror curve, we must reparameterize the curve by $(x,y) \mapsto (x y^{-f},y)$ \cite{BKMP} . We then get the framed mirror curve:\footnote{Notice that the framing transformation of \cite{BC} replaces $f$ by $-f$, which is why our choice of torus weights in \eqref{eq:weights} had a minus sign difference with \cite{BC}.}
\begin{equation}\label{eq:fmc}
C_f: \{ y^{s + r f}(1 - y) - x^r = 0 \} \subset (\mathbb{C}^*)^2.
\end{equation}
Note that we denoted the framing by $f$, the same letter as in the previous subsection, but the two may not be precisely equal; they may be related via the addition of a constant (see for instance \cite{BC}). But this will not be important for us, since we are interested in the $f \to \infty$ limit. 

The statement of the remodeling conjecture is that the differentials $d_1 d_2 \cdots d_n G^{(r,s)}_{g,n}\left[x_1, \ldots, x_n;f \right] $ are symmetric differential forms on $C_f$ that satisfy the Eynard-Orantin recursion for the spectral curve $C_f$, with fundamental one-form
\begin{equation}\label{eq:oneform}
d G^{(r,s)}_{0,1}[x;f] = \log y \frac{d x}{x},
\end{equation}
where $x$ and $y$ are related by \eqref{eq:fmc}.

Now what happens in the infinite framing limit? First, we notice that if we define new variables
\begin{equation}
x = \frac{\tilde{x}}{f^{1/r}}, \qquad y = 1 - \frac{\tilde{y}}{f},
\end{equation}
the equation for the framed mirror curve $C_f$ in \eqref{eq:fmc} becomes
\begin{equation}
\tilde{x}^r = \tilde{y} \left( 1 - \frac{\tilde{y}}{f} \right)^{rf} \left( 1 - \frac{\tilde{y}}{f} \right)^s.
\end{equation}
Taking the limit $f \to \infty$, we obtain the curve
\begin{equation}\label{eq:rL2}
\tilde{x}^r = \tilde{y} \mathrm{e}^{- r \tilde{y}},
\end{equation}
which is precisely the equation of the $r$-Lambert curve \eqref{eq:rLamb}!

What does it mean for the recursion satisfied by the generating functions? The one-form that is fundamental for the recursion, \eqref{eq:oneform}, can be rewritten in terms of the new variables $\tilde{x}$ and $\tilde{y}$. It becomes
\begin{equation}
d G^{(r,s)}_{0,1}[x;f] = \log y \frac{d x}{x} = \log \left( 1 - \frac{\tilde{y}}{f} \right) \frac{d \tilde{x}}{\tilde{x}},
\end{equation}
with $x$ and $y$ related by \eqref{eq:fmc}.
If we send $f \to \infty$, the leading order term is
\begin{equation}\label{eq:change}
- \frac{\tilde{y}}{f} \frac{d \tilde{x}}{\tilde{x}},
\end{equation}
with $\tilde{x}$ and $\tilde{y}$ now related through \eqref{eq:rL2}. 

Looking at the explicit form of the recursion, the result of this analysis is that if we consider the infinite framing limit of the differentials
\begin{equation}
d_1 d_2 \cdots d_nF^{(r)}_{g,n}[\tilde{x}_1, \ldots, \tilde{x}_n]:=\lim_{f \to \infty} \left((-1)^n f^{2-2g-n}d_1 d_2 \cdots d_n G^{(r,s)}_{g,n}\left[\frac{\tilde{x}_1}{f^{1/r}}, \ldots, \frac{\tilde{x}_n}{f^{1/r}};f \right]  \right),
\end{equation}
then they should satisfy the Eynard-Orantin recursion with fundamental one-form
\begin{equation}\label{eq:newic}
d F^{(r)}_{0,1}[\tilde{x}] = \tilde{y} \frac{d \tilde{x}}{\tilde{x}},
\end{equation}
where $\tilde{x}$ and $\tilde{y}$ are related by \eqref{eq:rL2}.
The $-1/f$ factor between \eqref{eq:newic} and \eqref{eq:change} is precisely responsible for the $(-1)^n f^{2-2g-n}$ factor in front of the differentials constructed from the recursion.

But we know what these new objects $d_1 d_2 \cdots d_nF^{(r)}_{g,n}[\tilde{x}_1, \ldots, \tilde{x}_n]$ are: in the previous section, we showed that they are precisely the differentials of the generating functions of orbifold Hurwitz numbers! Therefore, if we believe the remodeling conjecture for orbifolds, then we are led to claim that the generating functions for orbifold Hurwitz numbers should also satisfy the Eynard-Orantin recursion, with spectral curve the $r$-Lambert curve \eqref{eq:rLamb}, which can be written in parameteric form as
\begin{equation}
\label{eq:parametric}
\tilde{x} = z \mathrm{e}^{-z^r}, \qquad \tilde{y}=z^r,
\end{equation}
with fundamental one-form
\begin{equation}
d F^{(r)}_{0,1}[\tilde{x}] = \tilde{y} \frac{d \tilde{x}}{\tilde{x}} = z^{r-1} (1-r z^r) d z.
\end{equation}
We will prove this statement in section \ref{sect:EO}.

\section{The Laplace transform of the
orbifold Hurwitz numbers}
\label{sect:Laplace}

In this section we prove 
Theorem~\ref{thm:Fgn recursion}. 
From the remodeling conjecture
 point of view presented in the
previous section,
we see that 
the mirror theory to orbifold Hurwitz numbers
should be  built on the $r$-Lambert curve
(\ref{eq:r-Lambert}). To launch 
the Eynard-Orantin topological recursion
\cite{EO1, MS} for the $r$-Lambert curve as its
spectral curve, we
need to find  the Lagrangian immersion
\begin{equation*}
\begin{CD}
\iota :\Sigma @>>> T^*\bC^*
\\
&&@VV{\pi}V
\\
&&\bC^*
\end{CD}
\end{equation*}
of the open Riemann surface $\Sigma= \bC^*$
given by 
\begin{equation}
\label{eq:spectral}
\begin{cases}
x = z e^{-z^r}\\
y = f(z)
\end{cases}
\qquad z\in \Sigma,
\end{equation}
where $y=f(z)$ is yet to be determined.
We refer to \cite{MS} for a mathematical 
definition of the Eynard-Orantin 
topological recursion theory. 
The recipe of \cite{MS} tells us that the 
Laplace transform of the \emph{disk amplitude}
$H_{0,1}^{(r)}(\mu)$ should determine the 
Lagrangian immersion
by the formula
\begin{equation}
\label{eq:W01=dF01}
W_{0,1}^{(r)}(z) \overset{\text{def}}{=}
 \iota^*(y d\log x) = dF_{0,1}
^{(r)}(z),
\end{equation}
where
$\eta = y d\log x$ on $T^*\bC^*$ is 
the tautological holomorphic $1$-form
on the cotangent bundle $T^*\bC^*$.
In this section we first identify the 
Lagrangian immersion
(\ref{eq:spectral}) from the
computation of $F_{0,1}^{(r)}$.
We learn from \cite{JPT}  that
$$
H_{0,1}^{(r)}(\mu) = 
\frac{\mu^{\lfloor\frac{\mu}{r}\rfloor-2}}
{\lfloor\frac{\mu}{r}\rfloor !}
\quad \text{if} \quad \mu\equiv 0 \mod r,
$$
and $H_{0,1}^{(r)}(\mu) =0$ otherwise. 
Therefore, the $(g,n) = (0,1)$ free energy (\ref{eq:Fgn})
is given by 
$$
F_{0,1}^{(r)} = \sum_{m=1}^\infty 
\frac{(rm)^{m-2}}{m!} x^{rm}.
$$
We note that $F_{0,1}^{(r)}=0$ when $x=0$.

As the ELSV-type formula (\ref{eq:ELSV})
 indicates, the free energy computation
 requires that we need to find
similar infinite sums. We thus introduce the following
auxiliary functions:
\begin{equation}
\begin{aligned}
\label{eq:xi}
\xi_{\ell}^{r,k}(x) 
&= 
\sum_{m=0}^\infty 
\frac{(rm+k)^{m+\ell}}{m!}\; x^{rm+k}, 
\qquad k = 1, 2, \dots, r-1,
\\
\xi_{\ell}^{r,0}(x) 
&= 
\sum_{m=1}^\infty 
\frac{(rm)^{m+\ell}}{m!}\; x^{rm}.
\end{aligned}
\end{equation}
It is easy to see from Stirling's formula that 
the auxiliary functions are absolutely 
convergent with the radius of convergence
$e^{-\frac{1}{r}}$. 
Since these functions do not have any constant
terms, we have
\begin{equation}
\label{eq:xi-diff}
\xi_{\ell+1}^{r,k}(x)=x\frac{d}{dx}\;
\xi_{\ell}^{r,k}(x), 
\qquad k = 0,1,\dots,r-1.
\end{equation}
Therefore, all we need is to find the functions
at $\ell = -1$.
The standard procedure to compute (\ref{eq:xi})
is to use the \emph{Lambert function}.
Let us define
\begin{equation}
\label{eq:y}
y(x) = \xi_{-1}^{1,0}(x) =
\sum_{m=1}^\infty 
\frac{m^{m-1}}{m!}\; x^{m}.
\end{equation}
Then its inverse  is given by
the Lambert function (\ref{eq:Lambert}),
which can be easily checked by the
Lagrange inversion formula,
and the following formula holds for every
complex number $\a\in\bC^*$ (see 
for example, \cite{CGHJK}):
\begin{equation}
\label{eq:alpha-Lambert}
\exp\big(\a y(x)\big) = \sum_{m=0}^\infty \frac{\a 
(m+\a)^{m-1}}{m!} x^m .
\end{equation}
Therefore, the base case for 
(\ref{eq:xi}) is computed by
\begin{equation}
\begin{aligned}
\label{eq:xi-1 in x}
\xi_{-1}^{r,k}(x) 
&= 
\frac{1}{k} x^k 
\exp\left(\frac{k}{r}y(rx^r)
\right), \qquad k\ne 0,
\\
\xi_{-1}^{r,0}(x) 
&= 
\frac{1}{r} y(rx^r).
\end{aligned}
\end{equation}
We now \emph{define} the variable $z$ by
\begin{equation}
\label{eq:z}
z = z(x) = \left(\frac{1}{r} y(rx^r)\right)
^{\frac{1}{r}},
\end{equation}
so that its inverse function is given by
 the $r$-Lambert 
curve $x = z e^{-z^r}$ (\ref{eq:r-Lambert}).
In terms of $z$, the auxiliary functions 
(\ref{eq:xi-1 in x}) take much simpler form
\begin{equation}
\begin{aligned}
\label{eq:xi-1 in z}
\xi_{-1}^{r,k}(x) 
&= 
\frac{1}{k}z^k, \qquad k\ne 0,
\\
\xi_{-1}^{r,0}(x) 
&= 
z^r.
\end{aligned}
\end{equation}
The differential operator of (\ref{eq:xi-diff})
in $z$ is
\begin{equation}
\label{eq:dz}
x\frac{d}{dx}= \frac{z}{1-rz^r}\;\frac{d}{dz}.
\end{equation}
Since $F_{0,1}^{(r)}= \xi_{-2}^{r,0}(x)$, we have
$$
\frac{z}{1-rz^r}\;\frac{d}{dz} F_{0,1}^{(r)}
= \xi_{-1}^{r,0}(x) = z^r.
$$
Therefore, considering the fact that 
$z=0 \Longrightarrow x=0\Longrightarrow 
F_{0,1}^{(r)}=0$, 
 we find
\begin{equation*}
F_{0,1}^{(r)}(z) = \frac{1}{r}z^r-\half z^{2r},
\end{equation*}
which proves (\ref{eq:F01}).
Then from (\ref{eq:W01=dF01}), we have
\begin{align*}
dF_{0,1}^{(r)}(z) &= z^{r-1}(1-rz^r)dz , 
\\
yd\log(x) &=
yz^{-1}e^{z^r}
d\left(ze^{-z^r}\right) = yz^{-1}(1-rz^r)dz.
\end{align*}
Hence 
$$
y = f(z) = z^{r}.
$$
We have thus  determined the Lagrangian 
immersion
\begin{equation}
\label{eq:Lagrangian immersion}
\iota:\Sigma =  \bC^*\lrar T^*\bC^*, \qquad
\begin{cases}
x = ze^{-z^r}\\
y = z^r,
\end{cases}
\qquad z\in \Sigma,
\end{equation}
in agreement with (\ref{eq:parametric}).
We note that (\ref{eq:Lagrangian immersion}) 
implies
$
rx^r = (ry) e^{-ry},
$
hence $y(rx^r) = ry =rz^r$, which is consistent
with (\ref{eq:z}).

Another important feature of the Eynard-Orantin
theory \cite{MS} is the special relation 
between  the Laplace transform
$F_{0,2}^{(r)}(z_1,z_2)$
of the \emph{annulus amplitude} 
$H_{0,2}^{(r)}(\mu_1,\mu_2)$ 
 and the difference of the Riemann's prime 
 forms of the $x$-projection $\pi:\Sigma\lrar \bC^*$
 \cite{EO1}.  Again from \cite{JPT}, we know
 the annulus amplitude of the orbifold Hurwitz 
 numbers:
 $$
 H_{0,2}^{(r)}(\mu_1,\mu_2) 
 = r^{\la\frac{\mu_1}{r}\ra+\la\frac{\mu_2}{r}
 \ra}\cdot
 \frac{1}{\mu_1+\mu_2}\cdot
 \frac{\mu_1^{\lfloor \frac{\mu_1}{r}\rfloor}}
 {\lfloor \frac{\mu_1}{r}\rfloor!}
 \cdot
 \frac{\mu_2^{\lfloor \frac{\mu_2}{r}\rfloor}}
 {\lfloor \frac{\mu_2}{r}\rfloor!},
 \qquad \text{if}\;\;\mu_1+\mu_2\equiv 0 \mod r,
 $$
and $H_{0,2}^{(r)}(\mu_1,\mu_2) =0$ otherwise.
Here  
$
\la q\ra = q-\lfloor q\rfloor
$
is the fractional part of $q\in\bQ$.

\begin{proof}[Proof of \rm{(\ref{eq:F02})}]
Write $\mu_i=rm_i+k_i$, $i=1,2$, with 
$0\le k_i\le r-1$. Then
$$
\mu_1+\mu_2 \equiv 0 \mod r \Longleftrightarrow
\begin{cases}
k_1=k_2 = 0 \quad {\text{or}}
\\
k_1+k_2=r.
\end{cases}
$$
Therefore, we obtain a partial differential 
equation
\begin{align*}
&\left(
\frac{z_1}{1-rz_1}\frac{\partial}{\partial z_1}
+
\frac{z_2}{1-rz_2}\frac{\partial}{\partial z_2}
\right)
F_{0,2}^{(r)}(z_1,z_2) 
\\
= &
\left(
x_1\frac{\partial}{\partial x_1}
+
x_2\frac{\partial}{\partial x_2}
\right)
 \sum_{(\mu_1,\mu_2)\in\bZ_+^2}
 H_{0,2}^{(r)}(\mu_1,\mu_2)
 x_1^{\mu_1}x_2^{\mu_2}
 \\
 =&
 \sum_{(\mu_1,\mu_2)\in\bZ_+^2}
 r^{\la\frac{\mu_1}{r}\ra+\la\frac{\mu_2}{r}
 \ra}\cdot
 \frac{\mu_1^{\lfloor \frac{\mu_1}{r}\rfloor}}
 {\lfloor \frac{\mu_1}{r}\rfloor!}
 \cdot
 \frac{\mu_2^{\lfloor \frac{\mu_2}{r}\rfloor}}
 {\lfloor \frac{\mu_2}{r}\rfloor!} x_1^{\mu_1}x_2^{\mu_2}
 \\
 =&
 \left(
 \sum_{m_1=1}^\infty 
 \frac{(rm_1)^{m_1}}{m_1!}x_1^{rm_1}
 \right)
 \left(
 \sum_{m_1=2}^\infty 
 \frac{(rm_2)^{m_2}}{m_2!}x_2^{rm_2}
 \right)
 \\
 &+
 r\sum_{k=1} ^{r-1} 
 \left(
 \sum_{m_1=0}^\infty 
 \frac{(rm_1+k)^{m_1}}{m_1!}x_1^{rm_1+k}
 \right)
 \left(
 \sum_{m_2=0}^\infty 
 \frac{(rm_2+r-k)^{m_2}}{m_2!}x_2^{rm_1+r-k}
 \right)
 \\
 =&
 \xi_0 ^{r,0}(x_1)\xi_0^{r,0}(x_2)
 +
 r\sum_{k=1} ^{r-1} \xi_0 ^{r,k}(x_1)
 \xi_0^{r,r-k}(x_2)
 \\
 =&
 \frac{rz_1^r}{1-rz_1}\cdot
 \frac{rz_2^r}{1-rz_2}
 +
 r \frac{1}{1-rz_1}\cdot
 \frac{1}{1-rz_2}
 \sum_{k=1}^{r-1}
 z_1^kz_2^{r-k}
 \\
 =&
 \frac{1}{(1-rz_1)(1-rz_2)}
 \left(
 r^2z_1^rz_2^r+r\frac{z_1z_2^r-z_2z_1^r}{z_2-z_1}
 \right),
\end{align*}
where we have used (\ref{eq:xi-diff}) to find
the auxiliary functions. 
It is easy to check that 
$$
 F_{0,2}^{(r)}(z_1,z_2) =
 \log\frac{z_1-z_2}{x_1-x_2}-(z_1^r+z_2^r)
 $$
 is a solution of this
 differential equation, where $x_i = z_i e^{-z_i^r}$.
We note that as a convergent
power series in $(x_1,x_2)$, 
$F_{0,2}^{(r)}$ does not have any constant
term. Since the convergent series
eigenfunctions of
the Euler differential operator 
$x_1\frac{\partial}{\partial x_1}
+
x_2\frac{\partial}{\partial x_2}
$
are homogeneous polynomials, its kernel 
consists of constants. Therefore, (\ref{eq:F02})
is the only solution that satisfies the initial 
condition. 
\end{proof}

The main structural 
difference between the cut-and-join
equation (\ref{eq:CAJ}) and the 
differential recursion (\ref{eq:diffrecursion})
is whether the unstable geometries are
included in the right-hand side or not. 
While (\ref{eq:diffrecursion}) is a genuine
recursion for $F_{g,n}^{(r)}$ with respect
to $2g-2+n$, (\ref{eq:CAJ}) only gives a relation 
because $H_{g,n}^{(r)}$ appears on each side of
the equation.  In proving (\ref{eq:diffrecursion}), 
we first calculate the Laplace transform of the
cut-and-join equation, then use 
(\ref{eq:F01}) and (\ref{eq:F02}) to eliminate 
the unstable geometries from the right-hand side.

\begin{lem}
\label{eq:LT of CAJ in x}
The straightforward Laplace transform
of the cut-and-join equation \rm{(\ref{eq:CAJ})}
gives a differential equation
\begin{multline}
\label{eq:LT CAJ}
\left(2g-2+n+\frac{1}{r}\sum_{i=1}^n x_i
\frac{\partial}{\partial x_i}\right)
F_{g,n}^{(r)}[x_1,\dots,x_n]
\\
=\half \sum_{i\ne j}\frac{1}{x_i-x_j} 
\left(
x_i^2\frac{\partial}{\partial x_i}
F_{g,n-1}^{(r)}\big[x_{[\hat{j}]}\big]
-
x_j^2\frac{\partial}{\partial x_j}
F_{g,n-1}^{(r)}\big[x_{[\hat{i}]}\big]
\right)
-
\sum_{i\ne j} 
x_i\frac{\partial}{\partial x_i}
F_{g,n-1}^{(r)}\big[x_{[\hat{j}]}\big]
\\
+\half \sum_{i=1}^n 
\left.
u_1\frac{\partial}
{\partial u_1}u_2\frac{\partial}{\partial u_2}
F_{g-1,n+1}^{(r)}
\big[u_1,u_2,x_{[\hat{i}]}
\big]
\right|_{u_1=u_2=x_i}
\\
+\half \sum_{i=1}^n 
\sum_{\substack{g_1+g_2=g\\I\sqcup J=[\hat{i}]}}
\left(
x_i\frac{\partial}{\partial x_i}
F_{g_1,|I|+1}^{(r)}[x_i,x_I]
\right)
\left(
x_i\frac{\partial}{\partial x_i}
F_{g_2,|J|+1}^{(r)}[x_i,x_J]
\right),
\end{multline}
where $F_{g,n}^{(r)}[x_1,\dots,x_n]$ is defined
by \rm{(\ref{eq:Fgn in x})}.
\end{lem}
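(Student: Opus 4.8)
The plan is to apply the Laplace transform directly to \eqref{eq:CAJ}: multiply both sides by $\prod_{i=1}^n x_i^{\mu_i}$ and sum over $\vec{\mu}\in\bZ_+^n$, then resum every resulting series into the free energies and their derivatives via the definition \eqref{eq:Fgn in x}. On the left-hand side this is immediate. Since $x_i\frac{\partial}{\partial x_i}$ acts on $\prod_k x_k^{\mu_k}$ as multiplication by $\mu_i$, the Riemann--Hurwitz factor $s=2g-2+n+\frac{1}{r}\sum_i\mu_i$ of \eqref{eq:s} is reproduced by applying the operator $2g-2+n+\frac{1}{r}\sum_{i=1}^n x_i\frac{\partial}{\partial x_i}$ to $F_{g,n}^{(r)}[x_1,\dots,x_n]$, which gives the left-hand side of \eqref{eq:LT CAJ}.

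The two ``cut'' contributions on the right are handled by recognizing generating functions. For the non-separating term I would use, for each fixed $i$, the splitting $x_i^{\mu_i}=x_i^\alpha x_i^\beta$ and two auxiliary variables $u_1,u_2$ to identify, after summation, $\sum_{\alpha+\beta=\mu_i}\alpha\beta\,H_{g-1,n+1}^{(r)}(\alpha,\beta,\mu_{[\hat{i}]})\,x_i^{\mu_i}$ with $u_1\frac{\partial}{\partial u_1}u_2\frac{\partial}{\partial u_2}F_{g-1,n+1}^{(r)}[u_1,u_2,x_{[\hat{i}]}]$ evaluated at $u_1=u_2=x_i$; this is exactly the third line of \eqref{eq:LT CAJ}. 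For the separating term, the sum over $\alpha,\beta$ together with the set partition $I\sqcup J=[\hat{i}]$ factorizes, each factor resumming to $x_i\frac{\partial}{\partial x_i}F_{g_k,\cdot}^{(r)}[x_i,\cdot]$, producing the last line.

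The main obstacle is the ``join'' term $\half\sum_{i\ne j}(\mu_i+\mu_j)H_{g,n-1}^{(r)}(\mu_i+\mu_j,\mu_{[\hat{i},\hat{j}]})$. Here I would set $\nu=\mu_i+\mu_j$ and perform the inner sum over $\mu_i,\mu_j\ge 1$ with $\mu_i+\mu_j=\nu$ using $\sum_{a=1}^{\nu-1}x_i^a x_j^{\nu-a}=\frac{x_i x_j(x_i^{\nu-1}-x_j^{\nu-1})}{x_i-x_j}=\frac{x_j x_i^{\nu}-x_i x_j^{\nu}}{x_i-x_j}$; this finite geometric sum is precisely what produces the characteristic prefactor $\frac{1}{x_i-x_j}$. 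Since the numerator $x_j x_i^{\nu}-x_i x_j^{\nu}$ vanishes at $\nu=1$, the index may be extended freely to $\nu\ge 1$, and resumming gives $\half\sum_{i\ne j}\frac{x_i x_j}{x_i-x_j}\big(\frac{\partial}{\partial x_i}F_{g,n-1}^{(r)}[x_{[\hat{j}]}]-\frac{\partial}{\partial x_j}F_{g,n-1}^{(r)}[x_{[\hat{i}]}]\big)$. The remaining work is purely algebraic bookkeeping: to match the displayed form of \eqref{eq:LT CAJ}, which isolates the symmetric $\frac{x_i^2}{x_i-x_j}$ pieces together with the correction $-\sum_{i\ne j}x_i\frac{\partial}{\partial x_i}F_{g,n-1}^{(r)}[x_{[\hat{j}]}]$, I would relabel $i\leftrightarrow j$ in the antisymmetric combination (under which $\frac{\partial}{\partial x_j}F_{g,n-1}^{(r)}[x_{[\hat{i}]}]$ becomes $\frac{\partial}{\partial x_i}F_{g,n-1}^{(r)}[x_{[\hat{j}]}]$) and apply the identity $\frac{x_i x_j}{x_i-x_j}=\frac{x_i^2}{x_i-x_j}-x_i$. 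This relabeling, rather than any analytic subtlety, is the delicate part of the argument.
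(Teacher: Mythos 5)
Your proposal is correct and follows essentially the same route as the paper's proof: a term-by-term Laplace transform in which the cut terms resum immediately into the third and fourth lines of \eqref{eq:LT CAJ}, while the join term is handled by grouping over $\nu=\mu_i+\mu_j$ and evaluating the geometric sum, which produces the $\frac{1}{x_i-x_j}$ prefactor. The only (immaterial) difference is bookkeeping: the paper extends the inner sum to $\mu_i,\mu_j\ge 0$ via $\sum_{\substack{\mu_i+\mu_j=\nu\\ \mu_i,\mu_j\ge 0}}x_i^{\mu_i}x_j^{\mu_j}=\frac{x_i^{\nu+1}-x_j^{\nu+1}}{x_i-x_j}$ and subtracts the $\mu_i=0$, $\mu_j=0$ boundary contributions (which yields the displayed form directly, with convergence justified by the ELSV-type formula), whereas you keep $\mu_i,\mu_j\ge 1$ and convert at the end using $\frac{x_ix_j}{x_i-x_j}=\frac{x_i^2}{x_i-x_j}-x_i$ together with the $i\leftrightarrow j$ relabeling.
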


\begin{proof}
Since the cut-and-join equation (\ref{eq:CAJ})
has the 
same structure as the simple Hurwitz number case
of \cite{MZ}, the calculation of the Laplace
transform goes exactly in parallel. Therefore, 
the left-hand side of (\ref{eq:LT CAJ}) and the
second and the third lines of the right-hand side
are immediate from (\ref{eq:Fgn in x}) and
(\ref{eq:CAJ}), noting how 
$x_i\frac{\partial}{\partial x_i}$ acts on 
$x_i^{\a}$. 

The trick we need is
$$
\sum_{\mu_1,\mu_2\ge 0}f(\mu_1+\mu_2)
x_1^{\mu_1}x_2^{\mu_2}
=
\sum_{k=0}^\infty f(k)
\sum_{\mu_1+\mu_2 = k} 
x_1^{\mu_1}x_2^{\mu_2}
=
\sum_{k=0}^\infty f(k)
\left(
\frac{x_1^{k+1}-x_2^{k+1}}{x_1-x_2}
\right),
$$
which is valid if the  series on the left-hand side 
is absolutely convergent. 
The ELSV formula (\ref{eq:ELSV}) tells us
that the power series 
$F_{g,n}^{(r)}[x_1,\dots,x_n]$
is convergent on the polydisk
$$
\big(|x_1|<e^{-\frac{1}{r}}\big)
\times
\cdots\times
\big(|x_n|<e^{-\frac{1}{r}}\big).
$$
Therefore, we can compute the Laplace transform
of the first line of the right-hand side of 
(\ref{eq:CAJ}) as follows. 
\begin{align*}
&\half \sum_{\mu_1,\dots,\mu_n\in \bZ_+}
\sum_{i\ne j}(\mu_i+\mu_j)H_{g,r-1}^{(r)}
\left(\mu_i+\mu_j,\mu_{[\hat{i},\hat{j}]}\right)
\prod_{i=1}^n x_i^{\mu_i}
\\
=&
\half \sum_{i\ne j}\sum_{\nu=0}^\infty
\sum_{\vec{\mu}_{[\hat{i},\hat{j}]}\in
\bZ_+^{n-2}}
\sum_{\mu_i+\mu_j=\nu}
\nu H_{g,r-1}^{(r)}
\left(\nu,\mu_{[\hat{i},\hat{j}]}\right)
\prod_{i=1}^n x_i^{\mu_i}
\\
&\qquad -
\half \sum_{i\ne j}\sum_{\nu=0}^\infty
\sum_{\vec{\mu}_{[\hat{i},\hat{j}]}\in
\bZ_+^{n-2}}
\nu H_{g,r-1}^{(r)}
\left(\nu,\mu_{[\hat{i},\hat{j}]}\right)
(x_i^\nu+x_j^\nu)
\prod_{k\ne i,j}^n x_k^{\mu_k}
\\
=&
\half \sum_{i\ne j}\sum_{\nu=0}^\infty
\sum_{\vec{\mu}_{[\hat{i},\hat{j}]}\in
\bZ_+^{n-2}}
\nu H_{g,r-1}^{(r)}
\left(\nu,\mu_{[\hat{i},\hat{j}]}\right)
\frac{x_i^{\nu+1}-x_j^{\nu+1}}{x_i-x_j}
\prod_{k\ne i,j}^n x_k^{\mu_k}
\\
&\qquad 
-\sum_{i\ne j}
x_i\frac{\partial}{\partial x_i}
F_{g,r-1}^{(r)}
\left[x_i,x_{[\hat{i},\hat{j}]}
\right]
\\
=&
\half \sum_{i\ne j}
\frac{1}{x_i-x_j}
\left(
x_i^2\frac{\partial}{\partial x_i}
F_{g,r-1}^{(r)}
\left[x_i,x_{[\hat{i},\hat{j}]}
\right]
-
x_j^2\frac{\partial}{\partial x_j}
F_{g,r-1}^{(r)}
\left[x_j,x_{[\hat{i},\hat{j}]}
\right]
\right)
\\
&\qquad 
-\sum_{i\ne j}
x_i\frac{\partial}{\partial x_i}
F_{g,r-1}^{(r)}
\left[x_{[\hat{j}]}
\right].
\end{align*}
This completes the proof.
\end{proof}

\begin{proof}[Proof of Theorem~\ref{thm:Fgn recursion}]
The
conversion of (\ref{eq:LT CAJ})
to the form (\ref{eq:diffrecursion})
is now straightforward,
using (\ref{eq:dz}), and 
substituting the unstable geometries with the
actual values
(\ref{eq:F01}) and (\ref{eq:F02}) in 
the right-hand side. 

The contribution from the 
terms of $g_1=0, I=\emptyset$ and
$g_2=0,J=\emptyset$ in the third line of 
the right-hand side of (\ref{eq:LT CAJ})
is 
$$
\sum_{i=1}^n
\left(
x_i\frac{\partial}{\partial x_i} F_{0,1}^{(r)}[x_i]
\right)
\left(
x_i\frac{\partial}{\partial x_i} 
F_{g,n}^{(r)}\left[x_i, x_{[\hat{i}]}\right]
\right)
=
\sum_{i=1}^n
z_i^r \frac{z_i}{1-rz_i^r}\frac{\partial}{\partial z_i}
F_{g,n}^{(r)}(z_1,\dots,z_n).
$$
If we bring this term to the left-hand side
or (\ref{eq:LT CAJ}), then
we have
\begin{multline*}
\left(2g-2+n+\frac{1}{r}\sum_{i=1}^n x_i
\frac{\partial}{\partial x_i}\right)
F_{g,n}^{(r)}(z_1,\dots,z_n)
-
\sum_{i=1}^n
z_i^r \frac{z_i}{1-rz_i^r}\frac{\partial}{\partial z_i}
F_{g,n}^{(r)}(z_1,\dots,z_n)
\\
=
\left(2g-2+n+\frac{1}{r}\sum_{i=1}^n 
\frac{z_i}{1-rz_i^r}(1-rz_i^r)
\frac{\partial}{\partial z_i}
\right)
F_{g,n}^{(r)}(z_1,\dots,z_n),
\end{multline*}
which is the left-hand side of 
(\ref{eq:diffrecursion}).

The other unstable terms come from
$g_1=0, I=\{j\}$ and $g_2=0, J=\{j\}$. The 
contribution is
\begin{multline*}
\sum_{i=1}^n\sum_{j\ne i}
\left(
x_i\frac{\partial}{\partial x_i} F_{0,2}^{(r)}[x_i,x_j]
\right)
\left(
x_i\frac{\partial}{\partial x_i} 
F_{g,n-1}^{(r)}\left[x_i, x_{[\hat{i}, \hat{j}]}\right]
\right)
\\
=
\sum_{i\ne j}
\left(
\frac{z_i}{1-rz_i^r}
\frac{\partial}{\partial z_i}\left(\log(z_i-z_j)
-(z_i^r+z_j^r)
\right)
-x_i\frac{\partial}{\partial x_i}\log(x_i-x_j)
\right)
\left(
x_i\frac{\partial}{\partial x_i} 
F_{g,n-1}^{(r)}\left[x_{[\hat{j}]}\right]
\right)
\\
=
\sum_{i\ne j}
\left(
\frac{z_i}{1-rz_i^r}
\left(\frac{1}{z_i-z_j}
-rz_i^{r-1}
\right)
-\frac{x_i}{x_i-x_j}
\right)
\left(
x_i\frac{\partial}{\partial x_i} 
F_{g,n-1}^{(r)}\left[x_{[\hat{j}]}\right]
\right).
\end{multline*}
These terms and the first line of the 
right-hand side of (\ref{eq:LT CAJ}) together
yield
\begin{multline*}
\sum_{i\ne j}
\left(
\frac{z_i}{1-rz_i^r}
\left(\frac{1}{z_i-z_j}
-rz_i^{r-1}
\right)
-\frac{x_i}{x_i-x_j}+\frac{x_i}{x_i-x_j}-1
\right)
\left(
x_i\frac{\partial}{\partial x_i} 
F_{g,n-1}^{(r)}\left[x_{[\hat{j}]}\right]
\right)
\\
=
\sum_{i\ne j}
\left(
\frac{z_i}{1-rz_i^r}
\left(\frac{1}{z_i-z_j}
-rz_i^{r-1}
\right)
-1
\right)
\left(
\frac{z_i}{1-rz_i^r}\frac{\partial}{\partial z_i} 
F_{g,n-1}^{(r)}\left(z_{[\hat{j}]}\right)
\right)
\\
=
\sum_{i\ne j}
\frac{z_iz_j}{z_i-z_j}
\frac{1}{(1-rz_i^r)^2}\frac{\partial}{\partial z_i} 
F_{g,n-1}^{(r)}\left(z_{[\hat{j}]}\right),
\end{multline*}
which is the same as the first line of the 
right-hand side of (\ref{eq:diffrecursion}).

Converting the second line 
and the stable terms in the third line
of the right-hand side
 of (\ref{eq:LT CAJ}) is
straightforward. 
This completes the proof
of Theorem~\ref{thm:Fgn recursion}.
\end{proof}

\section{Some properties of the free energies}
\label{sect:properties}

In this section we derive some properties of the
free energies and compute a few examples.
We also check our results with 
closed formulas obtained in \cite{JPT}.

A direct consequence of the ELSV formula
(\ref{eq:ELSV}) is the following.

\begin{prop}
\label{prop:tensor}
The Laplace transform of \rm{(\ref{eq:ELSV})}, 
 the free energy of type $(g,n)$, 
is an element of the tensor algebra
\begin{equation}
\label{eq:tensor}
F_{g,n}^{(r)}(z_1,\dots,z_n)
\in \Sym^{\tensor n}\left(\bC(\bP^1)\right),
\end{equation}
except for $F_{0,2}^{(r)}(z_1,z_2)$. 
The poles $F_{g,n}^{(r)}(z_1,\dots,z_n)$
are located at
$$
D\times \big(\bP^1\big)^{n-1}
\cup \bP^1\times D\times \cdots\times \bP^1
\cup\cdots\cup \big(\bP^1\big)^{n-1}\times D,
$$
where 
\begin{equation*}
D = \{z\in\bC\;|\;1-rz^r=0\}.
\end{equation*}
The highest total degree of poles of 
$F_{g,n}^{(r)}$ is $6g-6+3n$.
\end{prop}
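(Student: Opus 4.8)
The plan is to read off all three assertions directly from the ELSV-type formula \eqref{eq:ELSV} by performing the Laplace transform one variable at a time, thereby reducing everything to the auxiliary functions $\xi_\ell^{r,k}$ of \eqref{eq:xi}, whose $z$-dependence has already been determined in \eqref{eq:xi-1 in z}. The point is that the entire dependence of $H_{g,n}^{(r)}(\vec\mu)$ on the ``bulk'' of each $\mu_i$ is governed by a single $\psi$-class expansion, and the transform of each such contribution is a rational function of $z_i$ with a single possible pole locus.

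First I would expand the denominators in \eqref{eq:ELSV} as $\tfrac{1}{1-\mu_i\psi_i}=\sum_{d_i\ge 0}\mu_i^{d_i}\psi_i^{d_i}$. Since $\dim\Mbar_{g,-\vec\mu}(BG)=3g-3+n$ (the map to $BG$ is a finite cover of $\Mbar_{g,n}$), the $\psi_i$ are nilpotent and the sum is finite; integrating against $\sum_j(-r)^j\lambda_j$ converts the Hodge integral into a finite combination of intersection numbers $\langle\tau_{d_1}\cdots\tau_{d_n}\lambda_j\rangle$ subject to $\sum_i d_i+j=3g-3+n$. These numbers depend on $\vec\mu$ only through the monodromy $\vec k=\vec\mu\bmod r$. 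Fixing residues $k_1,\dots,k_n$ and writing $\mu_i=rm_i+k_i$, the prefactors $r^{\langle\mu_i/r\rangle}\,\mu_i^{\lfloor\mu_i/r\rfloor}/\lfloor\mu_i/r\rfloor!$ combine with $\mu_i^{d_i}$ so that $\sum_{m_i\ge 0}\tfrac{(rm_i+k_i)^{m_i+d_i}}{m_i!}\,x_i^{rm_i+k_i}=\xi_{d_i}^{r,k_i}(x_i)$. Hence, summing over the finitely many residue classes, $F_{g,n}^{(r)}$ becomes a finite $\bC$-linear combination of tensor products $\xi_{d_1}^{r,k_1}\otimes\cdots\otimes\xi_{d_n}^{r,k_n}$.

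Next I would invoke the structure of the auxiliary functions. By \eqref{eq:xi-1 in z} each base function $\xi_{-1}^{r,k}$ is a polynomial in $z$, and by \eqref{eq:xi-diff}--\eqref{eq:dz} we have $\xi_d^{r,k}=D^{d+1}\xi_{-1}^{r,k}$ with $D=\tfrac{z}{1-rz^r}\tfrac{d}{dz}$. Since $D$ preserves rational functions and can introduce poles only along $1-rz^r=0$, every $\xi_d^{r,k}$ is a rational function of $z$ with poles confined to $D$. This gives the membership \eqref{eq:tensor} and the stated pole locus at once; symmetry of $H_{g,n}^{(r)}(\vec\mu)$ under permuting labels upgrades $\bC(\bP^1)^{\otimes n}$ to $\Sym^{\otimes n}(\bC(\bP^1))$. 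The case $(0,1)$ is covered because $F_{0,1}^{(r)}$ is the polynomial \eqref{eq:F01}, while the exception $F_{0,2}^{(r)}$ occurs precisely because $(0,2)$ lies outside the stable range of \eqref{eq:ELSV}: its direct value \eqref{eq:F02} contains the genuinely inseparable term $\log\tfrac{z_1-z_2}{x_1-x_2}$.

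For the degree bound I would track pole orders under $D$. A short induction shows that $D$ raises the order of a pole at a point of $D$ by at most $2$ (a simple pole from $\tfrac{z}{1-rz^r}$ and one from $\tfrac{d}{dz}$), and by at most $1$ on the first application from a regular function; hence $\xi_d^{r,k}$ has a pole of order at most $2d+1$ at each point of $D$. The total pole degree of a term is then at most $\sum_i(2d_i+1)=2\sum_i d_i+n$, and the dimension constraint $\sum_i d_i\le 3g-3+n$ bounds this by $6g-6+3n$. The expected main obstacle is the final claim that this bound is \emph{attained}, not merely an upper bound: one must check that the leading contribution, coming from $\lambda_0=1$ and top $\psi$-powers $\sum_i d_i=3g-3+n$, actually survives. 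I would argue this from the positivity of the pure $\psi$-intersection numbers on $\Mbar_{g,n}$ pulled back to $\Mbar_{g,-\vec\mu}(BG)$, while verifying that neither the orbifold normalization nor the sum over residue classes $\vec k$ cancels the top-order poles of $\bigotimes_i\xi_{d_i}^{r,k_i}$.
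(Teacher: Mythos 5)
Your proposal is correct and follows essentially the same route as the paper's proof: expand the ELSV-type formula \eqref{eq:ELSV} over residue classes $\vec{k}=\vec{\mu}\bmod r$ and $\psi$-degrees $\vec{d}$ with $|\vec{d}|\le 3g-3+n$, identify the Laplace transform of each factor with $\xi_{d_i}^{r,k_i}(x_i)$, and then read off rationality, the pole locus $D$, and the pole order $2d_i+1$ from \eqref{eq:xi-diff}, \eqref{eq:xi-1 in z}, and \eqref{eq:dz}. The only difference is that you treat the attainment of the degree bound $6g-6+3n$ more carefully (via positivity of the top $\psi$-intersection numbers) than the paper, which simply asserts that the highest-degree poles occur at $d_1+\cdots+d_n=3g-3+n$.
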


\begin{proof}
This follows from the fact that the coefficient
$$
r^{1-g +\sum_{i=1}^n \la \frac{\mu_i}{r}\ra}
\int_{\Mbar_{g,-\vec{\mu}}(BG)}
\prod_{i=1}^n \psi_i^{d_i}\sum_{j\ge 0} 
(-r)^j\lam_j
$$
of (\ref{eq:ELSV}) depends only on $\vec{\mu}
\mod r$, hence
\begin{multline}
\label{eq:Fgn Laplace}
F_{g,n}^{(r)}(z_1,\dots,z_n)
\\
=
\sum_{\vec{\mu}\in\bZ_+^n, 
\sum \mu_i\equiv 0\; (r)}
r^{1-g +\sum_{i=1}^n \la \frac{\mu_i}{r}\ra}
\int_{\Mbar_{g,-\vec{\mu}}(BG)}
\frac{\sum_{j\ge 0}(-r)^j \lam_j}
{\prod_{i=1}^n(1-\mu_i\psi_i)}
\prod_{i=1}^n
 \frac{\mu_i^{\lfloor\frac{\mu_i}{r}\rfloor}}
{\lfloor\frac{\mu_i}{r}\rfloor!}
x_i^{\mu_i}
\\
=
\sum_{\substack{0\le k_1,\dots,k_n< r,
\sum k_i\equiv 0\; (r)\\
d_1+\cdots+d_n\le 3g-3+n}}
r^{1-g +\sum_{i=1}^n \frac{k_i}{r}}
\left(
\int_{\Mbar_{g,-\vec{k}}(BG)}
\prod_{i=1}^n \psi_i^{d_i}
\sum_{j\ge 0}(-r)^j \lam_j
\right)
\prod_{i=1}^n \xi_{d_i}^{r,k_i}(x_i).
\end{multline}
For $d_i\ge 0$, each $\xi_{d_i}^{r,k_i}(x_i)$ 
is a rational function
in $z_i$ with poles at $z_i\in D$ of degree
$2d_i+1$ due to (\ref{eq:xi-diff}), 
(\ref{eq:xi-1 in z}), and (\ref{eq:dz}).
The highest degree poles occur when
$d_1+\cdots+d_n=3g-3+n$, and then
$F_{g,n}^{(r)}$ has poles of degree $6g-6+3n$.
\end{proof}

Using the same notation as in 
Theorem~\ref{thm:JPT}, let us denote
\begin{equation}
\label{eq:one point}
\la \tau_{2g-2+j}\lam_{g-j}\ra^{(r)}
=
\int_{\Mbar_{g,1}(BG)} \psi_1^{2g-2+j}
\lam_{g-j},
\end{equation}
where $G=\bZ/r\bZ$.
The generating function of these one-point
intersection numbers is determined in 
\cite{JPT}:
\begin{equation}
\label{eq:JPT one-point}
\frac{1}{2r}\;
\left(
\frac{r\hbar/2}{\sin(r\hbar/2)}
\right)^u
\frac{1}{\sin(\hbar/2)}
=
\frac{1}{r\hbar}+\sum_{g=1}^\infty 
\left(\sum_{j=0}^g
\la \tau_{2g-2+j}\lam_{g-j}\ra^{(r)}
u^j
\right)
 \hbar^{2g-1}.
\end{equation}
Note that from (\ref{eq:ELSV}) and (\ref{eq:xi})
we can calculate the one-point free energies:
\begin{equation}
\label{eq:Fg1}
F_{g,1}^{(r)}(z)
=
\sum_{j=0}^g
(-1)^{g-j} r^{1-j} 
\la \tau_{2g-2+j}\lam_{g-j}\ra^{(r)}
\xi_{2g-2+j}^{r,0}(x).
\end{equation}
For example, in terms of 
\begin{equation}
\label{eq:t}
t = \frac{1}{1-rz^r},
\end{equation}
we have
\begin{align*}
F_{1,1}^{(r)}(z) &= \frac{1}{24}
 (r^2t^3-r^2t^2-t+1),
\\
F_{2,1}^{(r)}(z) &= 
\frac{r^2}{5760}
\bigg(
525r^4 t^9 -1575r^4t^8
+10r^2(167r^2-15)t^7
+350r^2(-2r^2+1)t^6 
\\
&\qquad 
+(68r^4-260r^2+21)t^5
+(12r^4+60r^2-35)t^4
+14t^3
\bigg),
\\
F_{3,1}^{(r)}(z)&=
\frac{r^4}{2903040}
\bigg(
4729725 r^6 t^{15}
-23648625 r^6 t^{14}
+ 24255 r^4(2012 r^2-45) t^{13} 
\\
&\qquad
+ 35035 r^4 (- 1516 r^2+135) t^{12}
+ 35 r^2 (914912 r^4 - 235116 r^2+3969) t^{11} 
\\
&\qquad
+ 231 r^2 (-43156 r^4+ 31430 r^2-2205) t^{10}
\\
&\qquad
+ 35 (31016 r^6- 95340 r^4+ 20580 r^2-279 ) 
t^9 
\\
&\qquad
+ 7 (15416 r^6+ 100596 r^4 - 69384 r^2 +4185) 
t^8
\\
&\qquad
+ 12 (-1128 r^6- 2646 r^4+12789 r^2-2635) t^7
 \\
&\qquad
 + 6 (-320 r^6-840 r^4-2940 r^2+2387) t^6 
  -2232 t^5
\bigg).
\end{align*}
In general,

\begin{prop}
\label{prop:Fg1 polynomial}
The one-point free energy 
 $F_{g,1}^{(r)}(z)$ of genus $g$ is
a polynomial of degree $6g-3$ in 
$t = \frac{1}{1-rz^r}$.
\end{prop}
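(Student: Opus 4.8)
The plan is to read the statement directly off the explicit expansion (\ref{eq:Fg1}), combined with the degree bookkeeping for the functions $\xi_\ell^{r,0}$ already implicit in the proof of Proposition~\ref{prop:tensor}. The first step is to rewrite the Euler operator $D:=x\frac{d}{dx}=\frac{z}{1-rz^r}\frac{d}{dz}$ of (\ref{eq:dz}) in the variable $t=\frac{1}{1-rz^r}$. From $t=(1-rz^r)^{-1}$ one computes $\frac{dt}{dz}=r^2z^{r-1}t^2$ and $rz^r=\frac{t-1}{t}$, so that
\begin{equation*}
D=\frac{z}{1-rz^r}\frac{d}{dz}=r^2 z^r t^3\frac{d}{dt}=r\,t^2(t-1)\frac{d}{dt}.
\end{equation*}
The decisive feature of $D$ is that it raises polynomial degree by exactly two: if $p(t)=c\,t^d+(\text{lower})$ with $d\ge 1$ and $c\neq 0$, then $\frac{dp}{dt}=cd\,t^{d-1}+\cdots$ and hence $Dp=r\,t^2(t-1)\frac{dp}{dt}=rcd\,t^{d+2}+(\text{lower})$, whose leading coefficient $rcd$ is nonzero.

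Next I would fix the base case and run the induction. From $\xi_{-1}^{r,0}=z^r$ in (\ref{eq:xi-1 in z}) together with the relation $\xi_{\ell+1}^{r,0}=D\,\xi_\ell^{r,0}$ of (\ref{eq:xi-diff}), one gets $\xi_0^{r,0}=\frac{rz^r}{1-rz^r}=t-1$, a polynomial of degree one in $t$. Since $D$ maps polynomials in $t$ to polynomials in $t$, the degree-raising property above yields, by an immediate induction, that $\xi_\ell^{r,0}=D^\ell(t-1)$ is a polynomial in $t$ of degree exactly $2\ell+1$ with nonvanishing leading coefficient, for every $\ell\ge 0$. Substituting $\ell=2g-2+j$ into (\ref{eq:Fg1}), the $j$-th summand is then a polynomial in $t$ of degree $2(2g-2+j)+1=4g-3+2j$. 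As $j$ runs over $\{0,1,\dots,g\}$ these degrees are pairwise distinct, so no cancellation between summands can occur, and the degree of $F_{g,1}^{(r)}$ equals that of the top summand $j=g$, namely $6g-3$ — provided the coefficient of that summand is nonzero.

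Finally I would verify this nonvanishing. The leading coefficient of $\xi_{3g-2}^{r,0}$ is nonzero by the induction, so it suffices to check $\langle\tau_{3g-2}\lambda_0\rangle^{(r)}\neq 0$. Extracting the coefficient of $u^g$ in (\ref{eq:JPT one-point}) isolates exactly this number: since $\log\frac{r\hbar/2}{\sin(r\hbar/2)}=\frac{r^2\hbar^2}{24}+O(\hbar^4)$, the $u^g$-term of $\bigl(\frac{r\hbar/2}{\sin(r\hbar/2)}\bigr)^u$ is $\frac{1}{g!}\bigl(\frac{r^2\hbar^2}{24}\bigr)^g+\cdots$, and multiplying by $\frac{1}{2r}\cdot\frac{1}{\sin(\hbar/2)}=\frac{1}{r\hbar}+\cdots$ gives
\begin{equation*}
\langle\tau_{3g-2}\lambda_0\rangle^{(r)}=\frac{r^{2g-1}}{24^{g}\,g!}\neq 0 .
\end{equation*}
Hence the $j=g$ term genuinely realizes degree $6g-3$, and $F_{g,1}^{(r)}$ is a polynomial in $t$ of degree exactly $6g-3$.

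The only point demanding genuine care is the control of leading terms so that no cancellation destroys the expected top degree. This splits into two independent checks: that $D$ never annihilates the leading coefficient of a positive-degree polynomial (a one-line leading-term computation, done above), and that the lone surviving top summand carries a nonzero intersection-number coefficient. I expect the latter — pinning down $\langle\tau_{3g-2}\lambda_0\rangle^{(r)}$ from the generating series (\ref{eq:JPT one-point}) — to be the main, though still routine, obstacle; everything else is degree bookkeeping.
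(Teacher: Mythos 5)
Your proof is correct and takes essentially the same route as the paper's: both rest on the expansion (\ref{eq:Fg1}) together with the identity $x\frac{d}{dx}=rt^2(t-1)\frac{d}{dt}$, so that $\xi_{\ell}^{r,0}=\left(rt^2(t-1)\frac{d}{dt}\right)^{\ell}(t-1)$ is a polynomial of degree $2\ell+1$ in $t$, giving degree $2(3g-2)+1=6g-3$ for the top summand. The only genuine addition on your side is the verification that the degree is \emph{exactly} $6g-3$ --- namely that $D$ raises degree by exactly two, that the summand degrees $4g-3+2j$ are pairwise distinct so no cancellation occurs, and that $\la\tau_{3g-2}\lam_0\ra^{(r)}=r^{2g-1}/(24^g\, g!)\ne 0$ by extracting the $u^g\hbar^{2g-1}$ coefficient of (\ref{eq:JPT one-point}) --- a point the paper's proof asserts without comment.
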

\begin{proof}
The expression (\ref{eq:Fg1}) tells us that 
$F_{g,1}^{(r)}(z)$ is a function in $z^r$. More 
precisely, it is a ratio of a polynomial in 
$z^r$ and a power of $1-rz^r$. Therefore, it is 
a Laurent polynomial in $t$. 
The only auxiliary functions appearing in 
(\ref{eq:Fg1}) are $\xi_{\ell}^{r,0}(x)$ for $\ell\ge 0$. 
From (\ref{eq:xi-diff}) and (\ref{eq:xi-1 in z}) we
calculate
\begin{equation}
\label{eq:xi_0}
\begin{aligned}
\xi_0^{r,0}(x) &= \frac{rz^r}{1-rz^r} = t-1,
\\
\xi_{\ell}^{r,0}(x) &= 
\left(
rt^2(t-1)\frac{d}{dt}
\right)^\ell (t-1),
\end{aligned}
\end{equation}
since 
$$
x\frac{d}{dx} = rt^2(t-1)\frac{d}{dt}.
$$
Therefore, $F_{g,1}^{(r)}(z)$ is a polynomial
of degree $2(3g-2)+1$ in $t$.
The degree of the polynomial is the same as the
degree of poles of Proposition~\ref{prop:tensor}
for $n=1$.
\end{proof}

The initial cases of the differential recursion
(\ref{eq:diffrecursion}) are $(g,n)=(1,1)$
and $(0,3)$. For the $g=n=1$ case, the differential
equation is
$$
\left(
1+\frac{1}{r} z \frac{\partial}{\partial z}
\right)F_{1,1}^{(r)}(z)
= \left.\half \frac{z^2}{(1-rz^r)^2} 
\frac{\partial^2}{\partial u_1\partial u_2}
F_{0,2}^{(r)}(u_1,u_2)
\right|_{u_1=u_2=z}.
$$
The unique solution to this equation 
with the initial condition $F_{1,1}^{(r)}(0)=0$
agrees with the above computation
using the result of \cite{JPT}.

The free energy
$F_{0,3}^{(r)}(z_1,z_2,z_3)$ can also be 
calculated  from (\ref{eq:ELSV}) since
$\Mbar_{0,3}$ is a point. Thus the $BG$ Hodge
integral contribution in the formula is simply 
$1$. We have
\begin{multline*}
F_{0,3}^{(r)}(z_1,z_2,z_3)
=
\sum_{\substack{\vec{\mu}\in\bZ_+^3\\
\mu_1+\mu_2+\mu_3 \equiv 0}}
r^{1+\la\frac{\mu_1}{r}\ra+\la\frac{\mu_2}{r}\ra
+\la\frac{\mu_3}{r}\ra}
\prod_{i=1}^3 \frac{\mu_i^{\lfloor \frac{\mu_i}{r}
\rfloor}}
{\lfloor \frac{\mu_i}{r}\rfloor !}x_i^{\mu_i}
\\
=
r \xi_0 ^{r,0}(x_1)\xi_0 ^{r,0}(x_2)\xi_0 ^{r,0}(x_3)
+
r^2 \sum_{\substack{k_1+k_2+k_3=r\\
0\le k_i\le r-1}}
\xi_0 ^{r,k_1}(x_1)\xi_0 ^{r,k_2}(x_2)
\xi_0 ^{r,k_3}(x_3)
\\
+
r^3 \sum_{\substack{k_1+k_2+k_3=2r\\
0\le k_i\le r-1}}
\xi_0 ^{r,k_1}(x_1)\xi_0 ^{r,k_2}(x_2)
\xi_0 ^{r,k_3}(x_3).
\end{multline*}
More concretely, 
\begin{align*}
F_{0,3}^{(2)}(z_1,z_2,z_3)&=
8\frac{ z_1 z_2 z_3 (z_1 + z_2 + z_3 + 2 z_1 z_2 z_3)}
{(1 - 2 z_1^2) (1 - 2 z_2^2) (1-  2 z_3^2)},
\\
F_{0,3}^{(3)}(z_1,z_2,z_3)
&=
9\frac{z_1 z_2 z_3 \left(1 +3z_1z_2z_3+ 3
\sum_{i\ne j}z_i^2z_j +9z_1^2z_2^2z_3^2\right)}
   {(1 - 3 z_1^3) (1 -  3 z_2^3) (1 - 3 z_3^3)}.
\end{align*}

\section{The quantum curve}
\label{sect:quantum}

Since the $r$-Lambert curve (\ref{eq:r-Lambert})
has genus $0$, we define the partition function
$Z(z,\hbar)$ as in (\ref{eq:partition}).
In this section we prove 
Theorem~\ref{thm:quantum curve}.

\begin{prop}
\label{prop:Fgn(z) polynomial}
The principal specialization
$F_{g,n}^{(r)}(z,\dots,z)$ for $2g-2+n>0$
is a polynomial in $t$ of degree $6g-6+3n$,
where $t$ is the variable introduced in
\rm{(\ref{eq:t})}. 
\end{prop}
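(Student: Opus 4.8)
The plan is to argue directly from the closed expression \eqref{eq:Fgn Laplace} for $F_{g,n}^{(r)}$, restricted to the diagonal $z_1=\cdots=z_n=z$, so that $x_1=\cdots=x_n=x$ and each factor becomes $\xi_{d_i}^{r,k_i}(x)$. The core of the argument is a sharpening of Proposition~\ref{prop:tensor} describing the auxiliary functions as explicit rational functions of $t=\tfrac1{1-rz^r}$. First I would prove by induction on $\ell$, using the operator identity $x\frac{d}{dx}=rt^2(t-1)\frac{d}{dt}$ recorded after \eqref{eq:xi_0} together with $x\frac{d}{dx}z^k=k\,t\,z^k$ (which follows from \eqref{eq:dz}), the following: for $k=0$ the function $\xi_\ell^{r,0}(x)$ is a polynomial in $t$ of degree $2\ell+1$ with leading coefficient $r^\ell(2\ell-1)!!$; and for $1\le k\le r-1$ one has $\xi_\ell^{r,k}(x)=z^k P_{\ell,k}(t)$ with $P_{\ell,k}\in\bC[t]$ of degree $2\ell+1$, leading coefficient $r^\ell(2\ell-1)!!$, and vanishing at $t=0$ to order $\ell+1$. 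The base cases $\xi_0^{r,0}=t-1$ and $\xi_0^{r,k}=z^k t$ come from \eqref{eq:xi-1 in z} and \eqref{eq:dz}, and the inductive step uses the recursion $P_{\ell+1,k}=k\,t\,P_{\ell,k}+rt^2(t-1)P_{\ell,k}'$; the non-cancellation of the top and bottom coefficients uses $0<k<r\le r(\ell+1)$.

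Granting this, set $S_0=\{i:k_i=0\}$ and $S_+=\{i:k_i\ne0\}$. On the diagonal each summand of \eqref{eq:Fgn Laplace} equals a nonzero scalar times $z^{\sum_i k_i}\prod_{i\in S_0}\xi_{d_i}^{r,0}(x)\prod_{i\in S_+}P_{d_i,k_i}(t)$. Since $\sum_i k_i\equiv0\pmod r$, write $\sum_i k_i=rm$; using $z^r=\tfrac{t-1}{rt}$ gives $z^{rm}=\tfrac{(t-1)^m}{r^m t^m}$, so each summand is a rational function of $t$ with possible poles only at $t=0$ and $t=\infty$. I would then eliminate the pole at $t=0$ termwise: the factor $z^{rm}$ has there a pole of order $m=\tfrac1r\sum_{i\in S_+}k_i\le\tfrac{r-1}{r}|S_+|<|S_+|$, whereas $\prod_{i\in S_+}P_{d_i,k_i}(t)$ vanishes at $t=0$ to order $\sum_{i\in S_+}(d_i+1)\ge|S_+|>m$, and the remaining factors are polynomials in $t$. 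Hence every summand is regular at $t=0$, and therefore $F_{g,n}^{(r)}(z,\dots,z)$ is a polynomial in $t$.

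For the degree, note that the factor $z^{rm}$ tends to the constant $r^{-m}$ as $t\to\infty$, so the $t$-degree of the summand indexed by $(\vec k,\vec d)$ is $\sum_i(2d_i+1)=2\sum_i d_i+n$; this attains its maximum $6g-6+3n$ exactly when $\sum_i d_i=3g-3+n=\dim\Mbar_{g,n}$. Collecting the leading terms of these top summands, the dimension constraint forces only the $\lam_0=1$ term of $\sum_j(-r)^j\lam_j$ to survive, and the coefficient of $t^{6g-6+3n}$ becomes
\begin{equation*}
\sum_{\substack{0\le k_i<r,\ \sum_i k_i\equiv0\,(r)\\ \sum_i d_i=3g-3+n}} r^{2g-2+n}\Big(\prod_{i=1}^n(2d_i-1)!!\Big)\int_{\Mbar_{g,-\vec k}(BG)}\prod_{i=1}^n\psi_i^{d_i}.
\end{equation*}
Each weight $r^{2g-2+n}\prod_i(2d_i-1)!!$ is positive, and each top intersection number is a positive multiple of the corresponding Witten number on $\Mbar_{g,n}$, since the forgetful map $\Mbar_{g,-\vec k}(BG)\to\Mbar_{g,n}$ of Theorem~\ref{thm:JPT} has positive degree and the $\psi_i$ are its pullbacks. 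Thus the sum is strictly positive, hence nonzero, and the degree is exactly $6g-6+3n$. The main obstacle is precisely this last non-vanishing: establishing that the leading coefficient survives requires the positivity of the relevant Hurwitz--Hodge $\psi$-intersection numbers, and, for polynomiality itself, the careful bookkeeping of the orders of vanishing at $t=0$ in the inductive lemma.
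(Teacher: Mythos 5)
Your proof is correct, and it follows the same basic route as the paper: the paper's own proof of this proposition is a one-line reference to Proposition~\ref{prop:tensor} and its proof, i.e.\ to the expansion \eqref{eq:Fgn Laplace} of $F_{g,n}^{(r)}$ in terms of the auxiliary functions $\xi_{d}^{r,k}$, which is exactly your starting point. The value of your write-up is that it makes explicit two points the paper treats as ``immediate.'' First, the bookkeeping at $t=0$: since an individual factor $\xi_{d_i}^{r,k_i}(x)=z^{k_i}P_{d_i,k_i}(t)$ with $k_i\neq 0$ is not by itself a function of $t$, one really does need to group the factors using $\sum_i k_i\equiv 0 \pmod r$ and check that the pole of $z^{rm}=\left((t-1)/(rt)\right)^m$ at $t=0$ is cancelled by the vanishing of $\prod_{i\in S_+} P_{d_i,k_i}(t)$; this is your inequality $m\le \frac{r-1}{r}|S_+|<|S_+|\le\sum_{i\in S_+}(d_i+1)$, and your use of $0<k<r\le r(\ell+1)$ to pin down the exact vanishing order is the right way to secure the induction. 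The paper's Proposition~\ref{prop:Fg1 polynomial} carries out this kind of computation only for $n=1$, where necessarily all $k_i=0$ and the issue does not arise. Second, and more substantively, the exactness of the degree: the paper (both in Proposition~\ref{prop:tensor} and here) asserts that the terms with $|\vec{d}|=3g-3+n$ produce a nonzero top coefficient, without excluding cancellation among them. Your positivity argument---for dimension reasons only $\lam_0=1$ survives in $\sum_j(-r)^j\lam_j$, the weights $r^{2g-2+n}\prod_i(2d_i-1)!!$ are positive, and the Hurwitz--Hodge $\psi$-numbers are positive multiples of the Witten intersection numbers on $\Mbar_{g,n}$ because the $\psi_i$ are pullbacks under the finite forgetful map of Theorem~\ref{thm:JPT}---closes this gap and is a genuine addition: it upgrades ``degree at most $6g-6+3n$,'' which is all the paper's cited argument literally gives, to ``degree exactly $6g-6+3n$.''
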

\begin{proof}
This is an immediate consequence of
 Proposition~\ref{prop:tensor}
and its proof. 
\end{proof}
For unstable geometries, we use the same 
argument of \cite{MS} to find
\begin{align}
\label{eq:F01(t)}
F_{0,1}^{(r)}(z) &= \frac{1}{2r^2}
\left(1-\frac{1}{t^2}\right),
\\
\label{eq:F02(t)}
F_{0,2}^{(r)}(z,z) &= \frac{1}{r}
\left(1-\frac{1}{t}\right)+\log t.
\end{align}

\begin{prop}
\label{prop:Sm}
The $1$-variable functions
\begin{equation}
\label{eq:Sm}
S_m^{(r)}(z) = \sum_{2g-2+n=m-1}\frac{1}{n!}\;
F_{g,n}^{(r)}(z,\dots,z), \qquad m=0, 1, 2, \dots,
\end{equation}
satisfy the second order ordinary differential
equation
\begin{equation}
\label{eq:Sm ODE}
\begin{aligned}
  \left(m+ \frac{1}{r}\frac{d}{dz}\right) 
  S_{m+1}^{(r)}(z) 
  &= 
  \frac12 \left[\frac{d}{dz}
  \left(\frac{z}{1-rz^r}\right)^2
  {-\frac{2z}{(1-rz^r)^2}}\right]\cdot \frac{d}{dz}
  S_m^{(r)}(z) 
  \\
  &+
  \frac12\frac{z^2}{(1-rz^r)^2}
  \left(\frac{d^2}{dz^2}S_m^{(r)}(z)
    +\sum_{\substack{a+b=m+1\\a,b\ge2}}
    \frac{d}{dz}S_a^{(r)}(z)\frac{d}{dz}
    S_b^{(r)}(z)\right).
\end{aligned}
\end{equation}
\end{prop}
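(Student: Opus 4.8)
The plan is to obtain \eqref{eq:Sm ODE} as the \emph{principal specialization} of the differential recursion \eqref{eq:diffrecursion}. I would set $z_1=\cdots=z_n=z$ in \eqref{eq:diffrecursion}, weight by $\frac1{n!}$, and sum over all stable pairs $(g,n)$ with $2g-2+n=m$. By \eqref{eq:Sm} the free energies with $2g-2+n=m$ assemble into $S_{m+1}^{(r)}$; the integer prefactor $2g-2+n$ equals $m$ on every term of the sum, giving $m\,S_{m+1}^{(r)}$, while the Euler field restricts to the diagonal via $\sum_i z_i\partial_{z_i}\mapsto z\frac{d}{dz}$, so the left-hand side becomes $\big(m+\frac{z}{r}\frac{d}{dz}\big)S_{m+1}^{(r)}$. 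Everything then reduces to reorganizing the diagonal restriction of the right-hand side of \eqref{eq:diffrecursion}.

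Throughout I would use two consequences of the symmetry of $F_{g,n}^{(r)}$, writing $\partial_1,\partial_2$ for derivatives in one or two distinct slots evaluated on the diagonal: $\frac{d}{dz}F_{g,n}^{(r)}(z,\dots,z)=n\,\partial_1 F_{g,n}^{(r)}$ and $\frac{d^2}{dz^2}F_{g,n}^{(r)}(z,\dots,z)=n\,\partial_1^2F_{g,n}^{(r)}+n(n-1)\,\partial_1\partial_2 F_{g,n}^{(r)}$. The splitting sum (third line of \eqref{eq:diffrecursion}) is the most routine: reindexing by $a=2g_1+|I|$ and $b=2g_2+|J|$ gives $a+b=m+1$, while $a,b\ge2$ is exactly the stability constraint $2g_1-2+|I|\ge0$, $2g_2-2+|J|\ge0$; the weight $\frac1{n!}\,n\binom{n-1}{|I|}\frac1{(|I|+1)(|J|+1)}=\frac1{(|I|+1)!(|J|+1)!}$ then converts the two single-slot derivatives into $\frac{d}{dz}S_a^{(r)}\,\frac{d}{dz}S_b^{(r)}$, reproducing the product term of \eqref{eq:Sm ODE}.

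The main obstacle is the first line of \eqref{eq:diffrecursion}, whose kernel $\frac{z_iz_j}{z_i-z_j}$ is $0/0$ on the diagonal. I would resolve it by first freezing the $n-2$ spectator variables at $z$ and then letting $z_i,z_j\to z$: with $\Psi(u)=\frac1{(1-ru^r)^2}\partial_u F_{g,n-1}^{(r)}(u,z,\dots,z)$ the two bracketed terms are $\Psi(z_i)$ and $\Psi(z_j)$, and the per-pair limit is $\lim_{z_i,z_j\to z}\frac{z_iz_j}{z_i-z_j}\big(\Psi(z_i)-\Psi(z_j)\big)=z^2\Psi'(z)$. Differentiating $\Psi$ produces a term carrying $\partial_1 F_{g,n-1}^{(r)}$ and a term carrying $\partial_1^2 F_{g,n-1}^{(r)}$. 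After summation the first gives $\half z^2\big(\frac{d}{dz}\frac1{(1-rz^r)^2}\big)\frac{d}{dz}S_m^{(r)}$, which matches the first bracket of \eqref{eq:Sm ODE} via the product-rule identity $z^2\frac{d}{dz}\frac1{(1-rz^r)^2}=\frac{d}{dz}\big(\frac{z}{1-rz^r}\big)^2-\frac{2z}{(1-rz^r)^2}$.

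The crux — and the step I expect to be most delicate — is that the leftover $\partial_1^2$ contribution must vanish. It cancels against a hidden single-slot piece in the second line of \eqref{eq:diffrecursion}: the $F_{g-1,n+1}^{(r)}$ term yields $\partial_1\partial_2$, and rewriting $\partial_1\partial_2=\frac1{n(n-1)}\big(\frac{d^2}{dz^2}-n\,\partial_1^2\big)$ by the second identity above splits it into the clean $\half\frac{z^2}{(1-rz^r)^2}\frac{d^2}{dz^2}S_m^{(r)}$ plus a $\partial_1^2$ remainder. After reindexing $n\mapsto n\mp1$ so that both remainders run over $\{2g-2+n=m-1\}$, they carry the identical prefactor $\half\frac{z^2}{(1-rz^r)^2}\sum\frac1{(n-1)!}\partial_1^2 F_{g,n}^{(r)}$ with opposite signs and cancel, leaving precisely \eqref{eq:Sm ODE}. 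Analyticity on the polydisk $|x_i|<e^{-1/r}$ (already used in Lemma~\ref{eq:LT of CAJ in x}) justifies taking these diagonal limits term by term. Finally, since \eqref{eq:diffrecursion} holds only in the stable range, the base case $m=0$ — relating $S_1^{(r)}$ to $S_0^{(r)}$ — is not produced by the summation and would be checked directly from the explicit unstable expressions \eqref{eq:F01(t)} and \eqref{eq:F02(t)}.
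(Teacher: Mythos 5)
Your derivation for $m\ge1$ is correct and is essentially the paper's own proof: the paper merely displays the principal specialization of \eqref{eq:diffrecursion} and declares that the summation \eqref{eq:Sm} proves the proposition, whereas you supply the details it suppresses --- the removable-singularity limit $\lim_{z_i,z_j\to z}\frac{z_iz_j}{z_i-z_j}\bigl(\Psi(z_i)-\Psi(z_j)\bigr)=z^2\Psi'(z)$ for the kernel of the first line, the weight identity $\frac{1}{n!}\,n\binom{n-1}{|I|}\frac{1}{(|I|+1)(|J|+1)}=\frac{1}{(|I|+1)!\,(|J|+1)!}$ for the splitting sum, and the recombination of the $\partial_1^2$ and $\partial_1\partial_2$ contributions into $\frac{d^2}{dz^2}S_m^{(r)}$. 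Your cancellation route via $\partial_1\partial_2=\frac{1}{n(n-1)}\bigl(\frac{d^2}{dz^2}-n\,\partial_1^2\bigr)$ is an equivalent reshuffling of the direct bookkeeping $\frac{1}{(n-1)!}\partial_1^2+\frac{1}{(n-2)!}\partial_1\partial_2$ summed over $2g-2+n=m-1$, and it checks out; also, your left-hand side $\bigl(m+\frac{z}{r}\frac{d}{dz}\bigr)S_{m+1}^{(r)}$, with the factor of $z$, is what the specialization actually gives (the $\frac1r\frac{d}{dz}$ in \eqref{eq:Sm ODE} is a typo, corrected in the displayed equation of the paper's proof).

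The one step that would fail is your last one: the $m=0$ case of \eqref{eq:Sm ODE} cannot be ``checked directly'' from the unstable data, because it is false. With $S_0^{(r)}(z)=\frac1r z^r-\half z^{2r}$ and $S_1^{(r)}(z)=-\half z^r-\half\log(1-rz^r)$, the left-hand side at $m=0$ is
\begin{equation*}
\frac{z}{r}\frac{d}{dz}S_1^{(r)}(z)=\frac{z^r\,(r-1+rz^r)}{2\,(1-rz^r)},
\end{equation*}
while the right-hand side (whose product sum is empty) evaluates to
\begin{equation*}
\half\left[\frac{d}{dz}\left(\frac{z}{1-rz^r}\right)^2-\frac{2z}{(1-rz^r)^2}\right]\frac{d}{dz}S_0^{(r)}(z)
+\half\,\frac{z^2}{(1-rz^r)^2}\,\frac{d^2}{dz^2}S_0^{(r)}(z)
=\frac{z^r\,(r-1+rz^r)}{2\,(1-rz^r)^2},
\end{equation*}
so the two sides differ by a factor of $1-rz^r$. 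This is a defect of the proposition's indexing rather than of your argument: \eqref{eq:Sm ODE} holds precisely for $m\ge1$, i.e., exactly when the summation runs over stable $(g,n)$, which is all that your summation (and the paper's) establishes and all that is used later. In the proof of Theorem~\ref{thm:quantum curve} the unstable terms are re-inserted by hand, and the resulting modified equation does hold at $m=0$; but that is a genuinely different identity, since the unstable product terms $(a,b)=(0,1),(1,0)$ are counted differently when $m=0$. So you should delete the base-case step and state the range of validity as $m\ge1$.
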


\begin{proof}
The principal specialization of the differential
recursion 
(\ref{eq:diffrecursion}) reduces to the following
ordinary differential equation.
\begin{align*}
 \left(2g-2+n + \frac{1}{r}z\frac{d}{dz}\right) 
 &
 F^{(r)}_{g,n}(z,\cdots, z) 
 =
 \frac{n}2 \frac{d}{dz}\left(\frac{z^2}{(1-rz^r)^2}\right)\frac{d}{dz}
 F^{(r)}_{g,n-1}(z,\cdots,z)
 \\
&{-n\frac{z}{(1-rz^r)^2}\frac{d}{dz}
F_{g,n-1}^{(r)}(z,\cdots,z)}
\\
&+
 \frac12 n(n-1) \frac{z^2}{(1-r z^r)^2}\frac{\partial^2}{\partial u^2}\bigg|_{u=z}
F^{(r)}_{g,n-1}(z,\cdots,z)
\\
& + 
\frac{n}2 \frac{z^2}{(1-r z^r)^2}\frac{\partial^2}{\partial u_1\partial u_2}\bigg|_{u_1=u_2=z}
F^{(r)}_{g-1,n+1}(u_1,u_2,z,\cdots,z)
\\
&+
\frac{n!}{2}\sum_{\substack{g_1+g_2 = g\\n_1 + n_2 = n-1}}^{\text{stable}}
\Bigg[
\frac{1}{(n_1+1)!}\frac{z}{1-r z^r}\frac{d}{dz} F_{g_1,n_1+1}^{(r)}(z,\cdots,z) 
\\
&\qquad+
\frac{1}{(n_2+1)!}\frac{z}{1-r z^r}\frac{d}{dz} F_{g_2,n_2+1}^{(r)}(z,\cdots,z)
\Bigg].
\end{align*}
The summation (\ref{eq:Sm}) proves the 
proposition.
\end{proof}

\begin{proof}[Proof of Theorem~\ref{thm:quantum curve}]
Note that we have
$$
  S_0^{(r)}(z) = z^r\left(\frac1r -\frac12 z^r\right), \quad
  S_1^{(r)}(z) = -\frac12 \log(1-rz^r) -\frac12 z^r.
$$
If we include these unstable terms into
(\ref{eq:Sm ODE}), then we obtain
\begin{multline*}
  \left(m + \frac1r\frac{z}{1-rz^r}\frac{d}{dz}\right)S_{m+1}^{(r)}(z)  
  \\
   = \frac12 \left(\left(\frac{z}{1-rz^r}\frac{d}{dz}\right)^2 S_m^{(r)}(z) +
    \sum_{a+b=m+1}\frac{z}{1-rz^r}\frac{d}{dz} S_a^{(r)}(z)\cdot\frac{z}{1-rz^r}\frac{d}{dz}
    S_b^{(r)}(z)\right)
    \\
  -{\frac{1}{2}\frac{z}{1-rz^r}}\frac{d}{dz}S_m^{(r)}(z).
\end{multline*}
In terms of the generating series
\begin{displaymath}
  F^{(r)}(z,\hbar) = \sum_{m=0}^\infty S_m^{(r)} \hbar^{m-1},
\end{displaymath}
the equation becomes
\begin{align*}
  &\hbar\frac{\partial}{\partial\hbar} F^{(r)}(z,\hbar) + \frac1r \frac{z}{1-rz^r}\frac{d}{dz}
  F^{(r)}(z,\hbar)\\
& = \frac{\hbar}{2}\left[\left(\frac{z}{1-rz^r}\frac{d}{dz}\right)^2 F^{(r)}(z,\hbar) +
  \left(\frac{z}{1-rz^r}\frac{d}{dz} F^{(r)}(z,\hbar)\right)^2\right]\\
&-{\frac\hbar 2 \frac{z}{1-rz^r}}\frac{d}{dz}F^{(r)}(z,\hbar).
\end{align*}
Since $Z^{(r)}(z,\hbar) = \exp F^r(z,\hbar)$
and $D=
x\frac{d}{dx} = \frac{z}{1-rz^r}\frac{d}{dz}$, 
we have
\begin{equation}\label{eq:ps}
  \left(\dfrac{\partial}{\partial\hbar} + \left(\dfrac{1}{r\hbar} +{\frac12}\right) x\dfrac{d}{dx} -
    \dfrac{1}{2}\left(x\dfrac{d}{dx}\right)^2\right) Z^{(r)}(z,\hbar) = 0,
\end{equation}
which establishes (\ref{eq:Q}).

Now define
\begin{align}
\label{eq:operator P}
P&= \hbar\frac{\partial}{\partial w}
+
e^{-\frac{r-1}{2}\hbar\frac{\partial}{\partial
    w}}e^{-rw} e^{\frac{r-1}{2}\hbar\frac{\partial}{\partial w}}e^{-r\hbar\frac{\partial}{\partial
    w}}
    \\
    Q &= 
    \frac{\hbar}{2}\frac{\partial^2}{\partial w^2}
    +\left(\frac1r +\frac\hbar
2\right)\frac{\partial}{\partial w} - \hbar\frac{\partial}{\partial\hbar}.
    \end{align}
It is proved in \cite{MSS} that 
$P$ annihilates the partition function 
$Z^{(r)}(z,\hbar)$:
\begin{equation}
\label{eq:MSS}
PZ^{(r)}(z,\hbar)=
\left(
\hbar\frac{\partial}{\partial w}
+
e^{-\frac{r-1}{2}\hbar\frac{\partial}{\partial
    w}}e^{-rw} e^{\frac{r-1}{2}\hbar\frac{\partial}{\partial w}}e^{-r\hbar\frac{\partial}{\partial
    w}}
    \right)
    Z^{(r)}(z,\hbar)= 0.
\end{equation}
Since
$e^{\frac{r-1}{2}\hbar\frac{\partial}{\partial
    w}}$ is a \emph{shift} operator,
    with the multiplication 
    operator by a function
    $f(w)$ it satisfies the relation
$$
e^{\frac{r-1}{2}\hbar\frac{\partial}{\partial
    w}}\cdot f(w) = f(w+\frac{r-1}{2}\hbar) \cdot
    e^{\frac{r-1}{2}\hbar\frac{\partial}{\partial
    w}}.
    $$
Therefore, the operator $P$ can also be 
written as
\begin{displaymath}
  P= \hbar\frac{\partial}{\partial w} + e^{r(-w+\frac{r-1}{2}\hbar)}e^{-r\hbar\frac{\partial}{\partial
    w}}.
\end{displaymath}
Now the commutator relation
\begin{displaymath}
  [P,Q] = P
\end{displaymath}
is straightforward. 

The semi-classical limit calculations are the
same as those in \cite{MS}.
We have thus completed the proof 
of Theorem~\ref{thm:quantum curve}.
\end{proof}

\section{The Eynard-Orantin topological 
recursion}
\label{sect:EO}

In this section, we shall prove 
Theorem~\ref{thm:r-EO}.
For a mathematical definition of the Eynard-Orantin
theory, we refer to \cite{DMSS, MS}.

Because of the definition of the 
differentials 
$$
W_{g,n}^{(r)} = d_1\cdots d_n F_{g,n}^{(r)},
$$
we expect that the exterior differentiation 
of (\ref{eq:diffrecursion}) should give the 
integral recursion (\ref{eq:r-EO}). 
This naive idea does not work because of the
specific reference to the local Galois conjugation
$s_j$ appearing in the integral recursion. 
The PDE (\ref{eq:diffrecursion})
does not care about the $x$-projection of the
spectral curve, while (\ref{eq:r-EO}) heavily uses
the local ramification structure of the spectral 
curve as a covering of the $x$-coordinate line.
The integration kernel (\ref{eq:kernel}) shows
that the residue calculation on the right-hand side
of (\ref{eq:r-EO}) is similar to the local Galois
averaging. Yet evaluation of the free energies 
at any Galois conjugate point is no longer a rational 
function, since $s_j(z)$ is a very complicated
holomorphic function in $z$. 

The strategy we adopt in this section is
to extract the \emph{principal part} of 
the local Galois average, and then take the 
terms of the result that are the pull-back of
a function in the $x$-coordinate. On the
stable range $2g-2+n>0$, the  
free energies 
are indeed functions in the $x_i$-variables, so 
the last step makes sense. And by taking the
principal part of the Galois average, we maintain
the finiteness (polynomial-like)
structure of $W_{g,n}^{(r)}$
that represents the picewise polynomiality of 
the orbifold Hurwitz number 
$H_{g,n}^{(r)}(\vec{\mu})$.

Thus the simple residue operation of
the right-hand side of (\ref{eq:r-EO})
amounts to the combination of
the algebraic operations listed in 
Subsection~7.5 and the projection to the 
principal part described in 
Definition~\ref{def:proper-rational}.

\subsection{The spectral curve and the $x$-projection}
For the  convenience of calculations
we shall use the scaled coordinate 
$\eta = \sqrt[r]{r} z$ from now on. This 
change has no
significance, but some formulas and statements
become less cumbersome in the $\eta$-coordinate.

The $r$-Lambert curve (\ref{eq:r-Lambert}) is now
given by
\begin{equation*}
  x =  \frac{1}{\sqrt[r]{r}}\; \eta \; e^{-\eta^r/r},
\end{equation*}
and the $x$-projection has $r$ simple ramification
points at the $r$-roots of unity $1-\eta^r=0$. 
We denote these ramification points by
$$
\{\a_j\;|\;\a_j=e^{2(j-1)\pi i/r}, j=1,2,\cdots,r\}.
$$
Around each critical point $\a_j$, the 
$x$-projection is locally a double-sheeted 
covering.  
There is a neighborhood $U_j$ of $\a_j$
 such that when $\eta\in U_j$, there is another point
$\etatil$ satisfying $x(\etatil)=x(\eta)$. This 
correspondence defines a local deck transformation 
(or local Galois conjugation)
$s_j(\eta) := \etatil$  on
$U_j$. Clearly, $s_j$ is an
involution: $s_j(s_j(\eta)) = \eta$.

\begin{lem}
\label{lem:deck-tf}
For each $j=1,\cdots,r$, the
  deck transformation $s_j(\eta)$ is
  a holomorphic function in $\eta$ defined on
   $U_j$. Moreover, the function
   form of  $s_j(\eta)$ in the variable $\eta$ does
   not depend on the index $j$. 
\end{lem}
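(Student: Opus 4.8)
The plan is to treat the two assertions separately. For the holomorphicity of $s_j$, I would appeal to the local normal form of the $x$-projection at a simple branch point. A direct computation gives $dx = \frac{1}{\sqrt[r]{r}}\,e^{-\eta^r/r}(1-\eta^r)\,d\eta$, so $dx$ has a simple zero precisely at the points $\alpha_j$ where $1-\eta^r=0$. Hence there is a local coordinate $u$ centred at $\alpha_j$ with $x-x(\alpha_j)=u^2$, in which the two sheets are exchanged by $u\mapsto -u$; this involution is holomorphic in $u$, and since $u$ is a local biholomorphism of $\eta-\alpha_j$, it follows that $s_j$ is a holomorphic function of $\eta$ on a neighbourhood $U_j$ of $\alpha_j$, distinct from the identity.

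For the $j$-independence I would exploit the rotational symmetry of the curve. Writing $\alpha=e^{2\pi i/r}$, the relation $\alpha^r=1$ gives at once $x(\alpha\eta)=\alpha\,x(\eta)$, so the rotation $\rho(\eta)=\alpha\eta$ carries each fibre of $x$ to another fibre and permutes the ramification points cyclically, $\rho(\alpha_j)=\alpha_{j+1}$. I would then check that $\rho$ conjugates consecutive deck transformations, $s_{j+1}=\rho\circ s_j\circ\rho^{-1}$: indeed $\alpha\,s_j(\alpha^{-1}\eta)$ has the same $x$-value as $\eta$, lies near $\alpha_{j+1}$, and is not $\eta$, so it must equal $s_{j+1}(\eta)$. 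Iterating yields $s_j(\eta)=\alpha_j\,s_1(\alpha_j^{-1}\eta)$ with $\alpha_j=\alpha^{j-1}$, exhibiting every $s_j$ as the single function $s_1$ precomposed and postcomposed with an explicit rotation; this is the precise sense in which the functional form is independent of $j$.

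To make this fully explicit and to identify $s_1$, I would pass to the variable $Y=\eta^r$. Since $x^r=\frac{1}{r}\,Y e^{-Y}$, raising $x(\zeta)=x(\eta)$ to the $r$-th power turns the defining relation into $\zeta^r e^{-\zeta^r}=\eta^r e^{-\eta^r}$, i.e. $\tilde Y e^{-\tilde Y}=Y e^{-Y}$ with $\tilde Y=\zeta^r$. This is exactly the branch relation of the ordinary Lambert curve, whose nontrivial solution near the single branch point $Y=1$ is a universal holomorphic involution $\tilde Y=S(Y)$, furnished again by the local normal form argument. Because all the $\alpha_j$ satisfy $\alpha_j^r=1$, they collapse to the one point $Y=1$, so one recovers $s_j(\eta)^r=S(\eta^r)$, giving $s_j(\eta)=\alpha_j\bigl[S(\eta^r)\bigr]^{1/r}$ for the principal root near $1$. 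The factor $\bigl[S(\eta^r)\bigr]^{1/r}$ carries no dependence on $j$, confirming the claim.

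The step I expect to demand the most care is the branch bookkeeping in the last paragraph. I must verify that taking $r$-th powers does not introduce spurious solutions: from $x(\zeta)^r=x(\eta)^r$ one only gets $x(\zeta)=\omega\,x(\eta)$ for some $r$-th root of unity $\omega$, and I would pin down $\omega=1$ by a continuity argument on the connected set $U_j$, evaluating near $\alpha_j$ where $\zeta\approx\eta$. I also need to confirm that the principal $r$-th root is the correct sheet, so that $s_j$ maps $U_j$ into itself rather than onto a neighbourhood of a different $\alpha_k$, and that the resulting map is the genuine involution and not the identity; checking $s_j\circ s_j=\mathrm{id}$ directly from $s_j(\eta)=\alpha_j[S(\eta^r)]^{1/r}$ and the involutivity of $S$ makes the formula of the third paragraph consistent with the conjugation relation of the second.
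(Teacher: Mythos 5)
Your holomorphicity argument (local normal form at the simple zeros of $dx$) is fine, and your reduction to the ordinary Lambert relation is in substance what the paper does: the paper sets $\Delta = 1-\eta^r$, $\Delta_j = 1-s_j(\eta)^r$, observes that $x(s_j(\eta))=x(\eta)$ forces $\log(1-\Delta_j)+\Delta_j=\log(1-\Delta)+\Delta$ (your relation $\widetilde{Y}e^{-\widetilde{Y}}=Ye^{-Y}$ in the variable $Y=\eta^r$), and solves it by the universal power series $\Delta_j=-\Delta-\tfrac{2}{3}\Delta^2-\tfrac{4}{9}\Delta^3-\cdots$, which carries no reference to $j$. Your symmetry identity $x(\alpha\eta)=\alpha x(\eta)$ and the conjugation $s_{j+1}=\rho\circ s_j\circ\rho^{-1}$ are also correct.

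The gap is in your final inference. The formula $s_j(\eta)=\alpha_j\,[S(\eta^r)]^{1/r}$ still contains the $j$-dependent constant $\alpha_j$, so noting that the factor $[S(\eta^r)]^{1/r}$ is $j$-independent does not confirm the claim: read literally, this formula says the expressions for $s_j$ and $s_1$ differ by the multiplicative constant $\alpha_j$, i.e.\ that the functional form \emph{does} depend on $j$. Likewise, the ``precise sense'' you offer in your second paragraph (each $s_j$ is $s_1$ conjugated by a rotation) is strictly weaker than the lemma's assertion: conjugation by a rotation does not in general preserve functional form --- e.g.\ $f(\eta)=\eta+(\eta-1)^2$ turns into $\alpha f(\alpha^{-1}\eta)=\eta+\alpha^{-1}(\eta-\alpha)^2$, whose Taylor coefficients at the new center are different. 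The strong statement is exactly what the paper needs afterwards, since $\widetilde{\eta}$, $Y^k(\eta)$, $\phi_n^{r,k}$, $h_n^{r,k}$ and the recursion kernel are all written as single expressions valid on every $U_j$, and the principal-part and residue arguments later in the section depend on that uniformity. Fortunately the repair is one line: for $\eta\in U_j$ the principal root satisfies $[\eta^r]^{1/r}=\alpha_j^{-1}\eta$, so $\alpha_j=\eta/[\eta^r]^{1/r}$ and
\[
s_j(\eta)\;=\;\eta\left[\frac{S(\eta^r)}{\eta^r}\right]^{1/r}\;=\;\eta\,\exp\!\left(\frac{S(\eta^r)-\eta^r}{r}\right),
\]
where the second equality uses $S(Y)/Y=e^{S(Y)-Y}$. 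This is a single holomorphic expression in $\eta$ with no $j$ in it, and it is precisely the paper's reconstruction $s_j(\eta)=\eta\exp\frac{\Delta-\Delta_j}{r}$; with this step added, your argument closes and coincides with the paper's.
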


\begin{proof}
Let us introduce notations
$\Dt_j :=1-s_j(\eta)^r$ and $\Dt := 1-\eta^r$.
  The equation $x(s_j(\eta)) = x(\eta)$
  then gives
  \begin{displaymath}
    \log(1-\Dt_j)+\Dt_j = \log(1-\Dt) +\Dt.
  \end{displaymath}
  We make
$U_j$ smaller so that it lies in  the
  region $|\Dt|<1$. Then $\Dt_j$ has  a
   power series expansion 
  \begin{displaymath}
    \Dt_j=-\Dt -\frac23\Dt^2
    -\frac49\Dt^3-\frac{44}{135}\Dt^4-\frac{104}{405}\Dt^5-\frac{40}{189}\Dt^6
    -\frac{7648}{42525}\Dt^7-\frac{2848}{18225}\Dt^8 + O(\Dt^9),
  \end{displaymath}
  which convergences for $|\Dt|<1$. 
  Therefore $\Dt_j$ is a holomorphic function of
  $\eta$ defined on $U_j$ whose function
  form in $\eta$ does not 
  depend on $j$. Since
    \begin{displaymath}
    s_j(\eta) = \eta \exp\frac{\Dt-\Dt_j}{r},
  \end{displaymath}
  it is holomorphic in $\eta$ on
  $U_j$, and the function form
  does not depend on $j$, either. 
\end{proof}

\subsection{The free energies and the 
auxiliary functions in the $\eta$-coordinate}
By abuse of notation, we denote the auxiliary 
functions
of (\ref{eq:xi}) and (\ref{eq:xi-diff}) by the
same notation and consider them as functions in 
$\eta$. Thus we re-define 
\begin{equation}
\label{eq:xi-in-eta}
  \xi_{-1}^{r,k}(\eta) =
  \begin{cases}
   \frac{1}{r} \eta^r & k=0\\
    \frac1{kr^{k/r}}\eta^k & k>0,
  \end{cases}
  \qquad
   \xi_{m+1}^{r,k}(\eta) 
  = \frac{\eta}{1-\eta^r}
  \frac{d}{d\eta}
  \xi_m^{r,k}(\eta), \quad m\ge -1.
\end{equation}
\begin{rem}
  It is easy to see that $\xi_m^{r,k}(\eta)$ is 
  a proper rational function in $\eta$ for $m\ge0$, 
  whose
  denominator is a constant
  times $(1-\eta^r)^{2m+1}$. Thus
  $\xi_m^{r,k}(\eta)$ is meromorphic
  with poles only  at $\a_j$'s.
\end{rem}
A few examples of
 $\xi_m^{r,k}(\eta)$ are given in 
 Table~\ref{tab:xi}.
\begin{table}[ht]
  \centering
  \begin{tabular}{l|llll}
    & $k=0$ & $k=1$ & $k=2$ \\
    \hline
    & & & \\
    $m=-1$ & 
    $\frac{\eta^r}{r}$ & 
    $\frac{\eta}{r^{1/r}}$ & 
    $\frac{\eta^2}{2 r^{2/r}}$ \\
    &  &  & \\
    $m=0$ & 
    $\frac{\eta^r}{1-\eta^r}$ & 
    $\frac{\eta}{r^{1/r}(1-\eta^r)}$ & 
    $\frac{\eta^2}{r^{2/r}(1-\eta^r)}$\\
    & & &\\
    $m=1$ & 
    $\frac{r \eta^r}{\lf(1-\eta^r\rt)^3}$ & 
    $\frac{ \eta\,\lf[(r-1)\eta^r+1\rt]}{r^{1/r}\lf(1-\eta^r\rt)^3}$& 
    $\frac{\eta^2\,\lf[(r-2)
        \eta^r+2\rt]}{r^{2/r} \lf(1-\eta^r\rt)^3}$ \\ 
    & & &\\
    $m=2$ & 
    $\frac{r^2 \eta^r \lf(2 \eta^r+1\rt)}{\lf(1-\eta^r\rt)^5}$ & 
    $\frac{
      \eta\,\lf[(2r^2-3r+1) \eta^{2r}+(r^2+3r-2) \eta^r +1\rt]}{r^{1/r}\lf(1-\eta^r\rt)^5}$ & 
    $\frac{\eta^2\,\lf[2(r^2-3r+2)\eta^{2r}+(r^2+6r-8)\eta^r+4\rt]}{r^{2/r}
      \lf(1-\eta^r\rt)^5}$ \\ 
    & & &
  \end{tabular}
  \caption{$\xi_m^{r,k}(\eta)$ for $m=-1,\dots,2$ and $k=0,1,2$.}
  \label{tab:xi}
\end{table}

We denote the free energy 
$F_{g,n}^{(r)}(\eta_1,\dots,\eta_n)$ 
as
\begin{equation}\label{eq:Fgn-eta}
  F_{g,n}^{(r)}(\eta_1,\dots,\eta_n) = 
  r^{1-g}\,
  \sum_{\substack{|\vk|\equiv0\,(r)\\ |\vd|\le3g-3+n}}
  r^{|\vk|/r}\;
  \la \tau_{\vd}\;\Lam \ra^{(r),\vk}\;
  \xi_{\vd}^{r,\vk}(\eta_1,\dots,\eta_n),
\end{equation}
where $\vd:=(\ell_1,\dots,\ell_n)$ with $\ell_i\ge0$,
 $\vk:=(k_1,\dots,k_n)$ with $0\le
k_i<r$,
 $|\vk|:=\sum_{i=1}^n k_i$,
  $\Lam := \sum_{j\ge0}(-r)^j\lam_j$,
  and,
$\xi_{\vd}^{r,\vk}(\eta_1,\dots,\eta_n) :=\prod_{i=1}^n \xi_{\ell_i}^{r,k_i}(\eta_i)$.
The Hodge integrals are abbreviated as
   \begin{equation}
   \label{eq:Hodge}
  \la \tau_{\vd}\;\Lam \ra^{(r),\vk} := \la \tau_{\ell_1}\;\tau_{\ell_2}\dots\tau_{\ell_n} \;\Lam\ra^{(r),\vk} =
  \int_{\Mbar_{g,-\vk}(BG)}
  \prod_{i=1}^n \psi_i^{\ell_i}
  \sum_{j\ge 0}(-r)^j \lam_j.
\end{equation}
  In terms of the $\eta$-variables, the 
  unstable free energies 
  are given by
    \begin{align}
    \label{eq:F01-in-eta}
    F^{(r)}_{0,1}(\eta) &= \frac{1}{r^2}\eta^r-\frac{1}{2r^2}\eta^{2r},
    \\
    \label{eq:F02-in-eta}
    F^{(r)}_{0,2}(\eta_1,\eta_2) &=\log\frac{\eta_1-\eta_2}{x_1-x_2}
    -\frac{1}{r}(\log r + \eta_1^r+\eta_2^r).
  \end{align}
For $(g,n)$ in the stable range
 $2g-2+n>0$, 
  (\ref{eq:diffrecursion}) becomes
  \begin{multline}
    \label{eq:diffrecursion-in-eta}
    \lf( 2g-2+n+\frac{1}{r}\sum_{i=1}^n \eta_i \frac{\pp}{\pp \eta_i} \rt)
    F^{(r)}_{g,n}(\eta_1,\dots,\eta_n)
    \\
    = \half\sum_{i\ne j}\frac{\eta_i\eta_j}{\eta_i-\eta_j} 
    \lf[ \frac{1}{(1-\eta_i^r)^2}
    \frac{\pp}{\pp\eta_i} F^{(r)}_{g,n-1}
    \big(\eta_{[\hat{j}]}\big) 
    - \frac{1}{(1-\eta_j^r)^2}
    \frac{\pp}{\pp\eta_j} 
    F^{(r)}_{g,n-1}\big(\eta_{[\hat{i}]}\big) \rt]
    \\
    + \half \sum_{i=1}^n \frac{\eta_i^2}{(1-\eta_i^r)^2} 
    \lf.  \frac{\pp^2}{\pp u_1\pp u_2}
    F^{(r)}_{g-1,n+1}\big( u_1,u_2,\eta_{[\hat{i}]} \big)\right|_{u_1=u_2=\eta_i}
  \\
  + \half \sum_{i=1}^n \frac{\eta_i^2}{(1-\eta_i^r)^2} \sum_{\substack{ g_1+g_2=g\\I\sqcup J=[\hat{i}]}}
  ^{\rm{stable}} \lf( \frac{\pp}{\pp \eta_i} F^{(r)}_{g_1,|I|+1}(\eta_i,\eta_I) \rt) \lf(
  \frac{\pp}{\pp \eta_i} F^{(r)}_{g_2,|J|+1}(\eta_i,\eta_J) \rt).
\end{multline}

\subsection{The integration kernel for the 
Eynard-Orantin recursion}
Using (\ref{eq:F01-in-eta}) and (\ref{eq:F02-in-eta})
we find
$$
W_{0,1}^{(r)}(\eta) 
= \frac1r \eta^{r-1}(1-\eta^r)d\eta,
\qquad
W_{0,2}^{(r)}(\eta_1,\eta_2)
=\frac{d\eta_1\tensor d\eta_2}{(\eta_1-\eta_2)^2}
-\frac{dx_1\tensor dx_2}{(x_1-x_2)^2}.
$$
We recall  Lemma~\ref{lem:deck-tf}, which
states that the local Galois conjugation 
$s_j(\eta)$, considered as a function in $\eta$, 
does not depend on the index $j$.
Let us denote this function by
$\etatil = \etatil(\eta)$. As a consequence, 
the integration kernel (\ref{eq:kernel})
has an expression independent of $j$ as well, and 
is given by
\begin{equation}
\label{eq:kernel-in-eta}
K_j(\eta,\eta_1)
  =\frac r2\;
   \frac{\eta}{(\etatil^r-\eta^r)(1-\eta^r)}  
   \lf(\frac{1}{\eta-\eta_1}-\frac1{\etatil-\eta_1}\rt)
\cdot 
  d\eta_1\tensor\frac{1}{d\eta}.
\end{equation}

Here we note that $W_{0,2}^{(r)}$ in the 
integral recursion (\ref{eq:r-EO}) can be
replaced by Riemann's normalized fundamental 
differential of the second kind 
\begin{equation}
\label{eq:B}
B(\eta_1,\eta_2) = \frac{d\eta_1\tensor d\eta_2}
{(\eta_1-\eta_2)^2}.
\end{equation}
This is because the difference 
$$
W_{0,2}^{(r)}(\eta_1,\eta_2)-
B(\eta_1,\eta_2) = -\frac{dx_1\tensor dx_2}
{(x_1-x_2)^2}
$$
does not contribute to the residue calculation
of the second line of the right-hand side of
(\ref{eq:r-EO}).

\subsection{The local analytic properties of 
the auxiliary functions}

  Let us denote
  \begin{displaymath}
    \Da = 1-\eta^r  \quad \text{and} \quad \Db = 1-\etatil^r 
  \end{displaymath}
  for $\eta$ in each neighborhood $U_j$ of 
  the critical point $\a_j$.
When $\eta\to \a_i$, $\Da(\eta)$ and $\Db(\eta)$   converge to $0$. We therefore regard $\Da$ and
$\Db$  as small parameters, for example, 
$|\Da|<1$ and $|\Db|<1$, for $\eta\in U_j$.

To analyze the $r$-Lambert curve locally 
around its critical point, let us introduce
a local parameter $u$ around $\a_j$ by
\begin{equation}
\label{eq:u}
  e^{-u-1} = \eta^r e^{-\eta^r} = \etatil^r e^{-\etatil^r}.
\end{equation}
When $\eta$ is in any neighborhood $U_j$, 
we have
\begin{equation}
\label{eq:u-Da-Db}
  u = -\Da-\log(1-\Da) = -\Db-\log(1-\Db).
\end{equation}
Since the $u$-projection of the $r$-Lambert
curve around $\a_j$ is a double-sheeted 
covering, let us define
\begin{equation} \label{eq:v-u}
  \half v^2 = u.
\end{equation}
This \emph{Airy curve} equation describes
the local behavior of our spectral curve around $\a_j$.
We choose the branch of 
 $v$ at $\a_j$ so that we have an expansion
\begin{equation}\label{eq:v-in-Delta}
   v = \Da + \frac13 \Da^2 + \frac7{36}\Da^3 + \frac{73}{540}\Da^4+ O(\Da^5).
\end{equation}
Here again we can see that\
 $v$ is a holomorphic function in $\eta$ with the same expression at each neighborhood $U_j$
of $\a_j$, without any explicit dependence
on the index $j$. From the
definition and (\ref{eq:u-Da-Db}), we know 
that the other branch of the curve around $\a_j$  
is given by
\begin{displaymath}
  v(\etatil) = \Db+\frac13 \Db^2 + \frac7{36}\Db^3 + \frac{73}{540}\Db^4+ O(\Db^5).
\end{displaymath}
Of course in terms of the $v$-coordinate the 
local Galois conjugate is given simply by
\begin{equation}\label{eq:v and -v}
  v(\etatil) = -v(\eta),
\end{equation}
while $u$ is symmetric under the involution $\eta\mapsto\etatil$.

At each $\a_j$,  we can  express $\Da$ and $\Db$ as inverse series in $v$ for sufficiently small $v$.
\begin{equation}
\label{eq:Da,Db}
\begin{aligned}
\Da &=\psi(v):= v - \frac13 v^2 + \frac1{36} v^3 +\frac1{270} v^4 + O(v^5),
\\
 \Db &= \psi(-v) = -v- \frac13 v^2 - \frac1{36} v^3 +\frac1{270} v^4 + O(v^5).
  \end{aligned}
\end{equation}
To make the equations shorter,
we use the following functions
  \begin{align}
    \label{eq:Y}
    Y^k(\eta) &= \frac{\eta^k+\etatil^k}{2},\\
    \label{eq:E-func}
    E^{r,k}(\eta) &= \frac{\eta\,\frac{d}{d\eta}Y^k(\eta)}{(1-\eta^r)\, Y^k(\eta)},\\
    \label{eq:phi}
    \phi_n^{r,k}(\eta) &=
    \dfrac{\xi_n^{r,k}(\eta)-\xi_n^{r,k}(\etatil)}{2\,Y^k(\eta)},\\
    \label{eq:h-func}
    h_n^{r,k}(\eta) & = \dfrac{\xi_n^{r,k}(\eta)+\xi_n^{r,k}(\etatil)}{2\,Y^k(\eta)}.
  \end{align}
  Here $\etatil$ is understood as $s_j(\eta)$ for any $j$.

\begin{rem}
  Again thanks to Lemma~\ref{lem:deck-tf}, the 
  above expressions are well defined and 
 independent of which
  neighborhood $U_j$ of the
  critical points the $\eta$-variable lies.
\end{rem}

\begin{prop}\label{prop:analytic-properties-phi-h}
The functions
  $Y^k(\eta)$, $E^{r,k}(\eta)$ and $h_n^{r,k}(\eta)$ are symmetric  under the involution
  $\eta\mapsto\etatil$, while $\phi_n^{r,k}(\eta)$ is anti-symmetric. In terms of the local 
  parameter $v$, we have
  \begin{enumerate}
  \item [(i)] $E^{r,k}$ is an even holomorphic function in $v$, or a holomorphic function in $u$;
  \item [(ii)] $\phi_{-1}^{r,k}$ is an odd holomorphic function in $v$. For $n\ge0$, $\phi_n^{r,k}$
    is an odd meromorphic function in $v$, which has at most $(2n+1)$-th order pole at $v=0$, and
    no other poles near $v=0$;
  \item [(iii)] For $n\ge-1$, $h_n^{r,k}$ is an even holomorphic function in $v$, or a holomorphic
    function in $u$.
  \end{enumerate}
\end{prop}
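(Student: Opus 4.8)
The plan is to push everything down to the branch coordinate $v$ of \eqref{eq:v-u}, where by \eqref{eq:v and -v} the deck transformation $\eta\mapsto\etatil$ is simply the sign change $v\mapsto-v$. Near a fixed $\a_j$ the relation $\eta^r=1-\Da=1-\psi(v)$ of \eqref{eq:Da,Db} shows $\eta$ (and hence $\etatil(v)=\eta(-v)$) is a holomorphic function of $v$, so a function of $\eta$ is symmetric (resp.\ anti-symmetric) under the involution exactly when it is even (resp.\ odd) in $v$. The symmetry statements are then immediate from the definitions \eqref{eq:Y}--\eqref{eq:h-func}: the numerator of $Y^k$ and of $h_n^{r,k}$ is invariant under $\eta\leftrightarrow\etatil$ and $Y^k$ is invariant, so these are even in $v$; the numerator of $\phi_n^{r,k}$ changes sign, so $\phi_n^{r,k}$ is odd. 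The symmetry of $E^{r,k}$ will drop out of part (i) below, once it is seen to be a function of $u$.

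The engine of the proof is the operator identity
\[
D:=\frac{\eta}{1-\eta^r}\frac{d}{d\eta}=x\frac{d}{dx}=-r\frac{d}{du}=-\frac{r}{v}\frac{d}{dv},
\]
which follows from \eqref{eq:dz}, from $rx^r=e^{-u-1}$ in \eqref{eq:u} (so $u$ and $w=-\log x$ differ affinely with $du/dw=r$), and from $u=\half v^2$ in \eqref{eq:v-u}. Note that $\frac1v\frac{d}{dv}$ preserves parity in $v$. Writing $\xi_n^{r,k}(\eta)=Y^k\big(h_n^{r,k}+\phi_n^{r,k}\big)$, the products $Y^kh_n^{r,k}$ and $Y^k\phi_n^{r,k}$ are exactly the even and odd parts of $\xi_n^{r,k}$ in $v$. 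Applying $\xi_{n+1}^{r,k}=D\xi_n^{r,k}=-\frac rv\frac{d}{dv}\xi_n^{r,k}$ (from \eqref{eq:xi-in-eta}) to each parity component and using $E^{r,k}=D\log Y^k=-\frac rv(\log Y^k)'$ to absorb the term $-\frac rv\frac{(Y^k)'}{Y^k}f_n$, I obtain for both $f_n=\phi_n^{r,k}$ and $f_n=h_n^{r,k}$ the single recursion
\[
f_{n+1}=E^{r,k}\,f_n-\frac rv\,\frac{df_n}{dv}.
\]

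With this recursion the three claims follow by induction on $n$. For (i): $Y^k$ is even holomorphic in $v$ with $Y^k|_{v=0}=\a_j^k\neq0$, hence holomorphic and non-vanishing in $u=\half v^2$, so $E^{r,k}=-r\frac{d}{du}\log Y^k$ is holomorphic in $u$ (in particular even in $v$). The base cases $n=-1$ are direct from \eqref{eq:xi-in-eta}: for $k>0$, $h_{-1}^{r,k}=\tfrac1{kr^{k/r}}$ is constant and $\phi_{-1}^{r,k}=\tfrac1{kr^{k/r}}\,\frac{\eta^k-\etatil^k}{\eta^k+\etatil^k}$ is odd holomorphic (denominator $2Y^k\neq0$ at $v=0$), while for $k=0$ one uses $\eta^r+\etatil^r=2-\psi(v)-\psi(-v)$ and $\eta^r-\etatil^r=\psi(-v)-\psi(v)$ from \eqref{eq:Da,Db}. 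For (iii): if $h_n^{r,k}$ is even holomorphic, then $\frac{d}{dv}h_n^{r,k}$ is odd holomorphic, so $\frac1v\frac{d}{dv}h_n^{r,k}$ is even holomorphic, and $E^{r,k}h_n^{r,k}$ is even holomorphic by (i); hence $h_{n+1}^{r,k}$ is even holomorphic. For (ii): $\phi_{-1}^{r,k}$ is odd holomorphic, and feeding it into the recursion the term $\frac rv(\phi_{-1}^{r,k})'$ acquires at most a simple pole at $v=0$, so $\phi_0^{r,k}$ has pole order $\le 1=2\cdot0+1$. Inductively, if $\phi_n^{r,k}$ is odd with pole order $\le2n+1$ at $v=0$ and no other pole near $0$, then $\frac{d}{dv}\phi_n^{r,k}$ has pole order $\le2n+2$, $\frac1v\frac{d}{dv}\phi_n^{r,k}$ has pole order $\le2n+3$, and since $E^{r,k}$ is holomorphic, $\phi_{n+1}^{r,k}$ is odd with pole order $\le2(n+1)+1$ and no other pole near $0$.

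The one genuinely delicate point is the operator identity $D=-\frac rv\frac{d}{dv}$ together with the assertion that $\xi_n^{r,k},Y^k,E^{r,k}$ may be regarded as single-valued holomorphic (resp.\ meromorphic) functions of the branch coordinate $v$ near a fixed $\a_j$. This rests on Lemma~\ref{lem:deck-tf} and on the $j$-independence of the series \eqref{eq:Da,Db}, which guarantee that $\eta(v)$ and $\etatil(v)=\eta(-v)$ are honest holomorphic functions of $v$ and that the even/odd splitting in $v$ coincides with the $\pm1$-eigenspace decomposition of the involution. Once this identification is secured, the parity bookkeeping under the single recursion does all the remaining work, and the pole-order count in (ii) is the only quantitative step.
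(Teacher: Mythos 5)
Your proof is correct, and its skeleton coincides with the paper's: the same parity decomposition of $\xi_n^{r,k}$ into the even part $Y^k h_n^{r,k}$ and odd part $Y^k\phi_n^{r,k}$, the same pushed-down vector field $-\frac{r}{v}\frac{d}{dv}$ (the paper's \eqref{eq:vector-field-identities}), the same recursion (the paper's \eqref{eq:recursionPhi} and \eqref{eq:recursion-h}, which you derive in unified form as $f_{n+1}=E^{r,k}f_n-\frac{r}{v}f_n'$), and the same induction from the base case $n=-1$. Where you genuinely diverge is part (i): the paper proves holomorphy and evenness of $E^{r,k}$ by writing out the explicit local expression
\begin{equation*}
E^{r,k} \;=\; k\,\frac{\psi(-v)\,e^{\frac{k}{r}\psi(-v)}+\psi(v)\,e^{\frac{k}{r}\psi(v)}}
{\psi(v)\,\psi(-v)\,\bigl(e^{\frac{k}{r}\psi(v)}+e^{\frac{k}{r}\psi(-v)}\bigr)}
\end{equation*}
and expanding it via \eqref{eq:Da,Db}, whereas you argue structurally: $Y^k$ is even, holomorphic, and nonvanishing at $v=0$ (its value there is $\a_j^k\neq 0$), hence $E^{r,k}=DY^k/Y^k$ is even holomorphic in $v$, i.e., a holomorphic function of $u$. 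Your route is shorter and avoids the series computation; the paper's computation has the side benefit of exhibiting the leading value $k\bigl(\frac{k}{r}-\frac13\bigr)$ of $E^{r,k}$ at $v=0$, which is not needed for the proposition itself. A second, smaller point in your favor: you handle the $k=0$ base case of (iii) explicitly, via $\eta^r+\etatil^r=2-\psi(v)-\psi(-v)$, whereas the paper's stated base case $h_{-1}^{r,k}=\frac{1}{k\,r^{k/r}}$ only makes sense for $k>0$, with the $k=0$ case appearing only in Remark~\ref{rem:xi-etabar} after the proof.
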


\begin{proof}
  (i). By definition
  it is clear that $Y^k(\eta)$ and $h_n^{r,k}(\eta)$ are symmetric,  and $\phi_n^{r,k}$ is
   anti-symmetric,  under the involution.
  Note that $E^{r,k}(\eta)$ has a local expression
  \begin{align*}
    E^{r,k}(\eta) = k\dfrac{\psi(-v) e^{\frac{k}{r}\psi(-v)}+\psi(v)
      e^{\frac{k}{r}\psi(v)}}{\psi(v)\psi(-v)\lf(e^{\frac{k}{r}\psi(v)}+e^{\frac{k}{r}\psi(-v)}\rt)}
    =k \dfrac{(\frac{k}{r}-\frac{1}{3})+O(v)}{1+O(v)}.
  \end{align*}
  From the first equality we know that $E^{r,k}$ is  a function in $v$, and symmetric  under the involution. The second equality  is due to the expansion
  (\ref{eq:Da,Db}) of $\psi(v)$, which indicates that$E^{r,k}(v)$ is holomorphic near $v=0$. Thus near
  $v=0$, $E^{r,k}$  expands into a
   power series containing only even powers of $v$. Hence
  $E^{r,k}$ is a power series in $u=\half v^2$. 
    
We now prove (ii) and (iii) by induction. 
  (ii). For $n=-1$, when $\eta\in U_j$, we have
  \begin{equation}
  \begin{aligned}\label{eq:phi-1-r-k}
    \phi_{-1}^{r,k} 
    &= 
    \frac1{k r^{k/r}} 
    \dfrac{\eta^k-\eta^k e^{\frac{k}{r}(\Da-\Db)}}{\eta^k+\eta^k\,e^{\frac{k}{r}(\Da-\Db)}}
    \\
    &= 
    \frac1{k r^{k/r}} 
    \tanh
    \lf[\frac{k\,\lf(\psi(-v)-\psi(v)\rt)}{2r}\rt]
    = -\frac1{r^{1+k/r}}v+O(v^3) \quad
    {\text{for}}\quad
    k>0,
    \\
    \phi_{-1}^{r,0}
    &=
    \frac{\psi(-v)-\psi(v)}{2r} = -\frac{v}{r}+O(v^3).
  \end{aligned}
  \end{equation}
These are odd holomorphic functions near $v=0$. By induction, for $n\ge-1$, if
  $\phi_n^{r,k}$ is an odd function in $v$, then  we have:
  \begin{multline*}
    2\,Y^k(\eta)\,\phi_{n+1}^{r,k}(\eta) 
    = \xi_{n+1}^{r,k}(\eta)-\xi_{n+1}^{r,k}(\etatil) 
    = \frac{\eta}{1-\eta^r}\frac{d}{d\eta}\lf(\xi_{n}^{r,k}(\eta)-\xi_{n}^{r,k}(\etatil)\rt)
    \\
    =\frac{\eta}{1-\eta^r}\frac{d}{d\eta}\lf(2\,Y^k(\eta)\,\phi_{n}^{r,k}(v)\rt)
    =2\,Y^k(\eta)\,\lf(E^{r,k}(u)-\frac{r}{v}\frac{d}{dv}\rt)\phi_n^{r,k}(v).
  \end{multline*}
  Here we used (\ref{eq:E-func})
  and 
    \begin{equation}
    \label{eq:vector-field-identities}
    \frac{\eta}{1-\eta^r}\frac{d}{d\eta} = \frac{\etatil}{1-\etatil^r}\frac{d}{d\etatil} =
    -\frac{r}{v}\frac{d}{dv} = -r\frac{d}{du}.
  \end{equation}
    Therefore we obtain a recursion formula
  \begin{equation}\label{eq:recursionPhi}
    \phi_{n+1}^{r,k} = \lf(E^{r,k}(u)-\frac{r}{v}\frac{d}{dv}\rt)\phi_n^{r,k}(v),
  \end{equation}
  which proves that $\phi_{n}^{r,k}$ for $n\ge0$ are odd meromorphic functions of
  $v$, with poles of order at most $2n+1$ 
  at $v=0$ and no other poles near $v=0$. 
  
  (iii). Note that
  $h_{-1}^{r,k}=\frac{1}{k r^{k/r}}$ is  an even holomorphic function in
  $v$. For $n\ge-1$, suppose that
   $h_n^{r,k}$ is a function in $u$. Then the recursion
  \begin{equation}
    \label{eq:recursion-h}
    h_{n+1}^{r,k} = \lf(E^{r,k}(u)-r\frac{d}{du}\rt)\,h_n^{r,k}(u)
  \end{equation}
  shows that $h_{n+1}^{r,k}$ is again an
  even holomorphic function in $v$.
  This completes the proof.
\end{proof}

\begin{rem}\label{rem:xi-etabar}
 Around each critical point 
 $\a_j$, we have
  \begin{equation}\label{eq:xi-decompose-in-phi-h}
    \xi_n^{r,k}(\eta) = Y^k(\eta)\lf(\phi_n^{r,k}(v)+h_n^{r,k}(u)\rt),\qquad
    \xi_n^{r,k}(\etatil) = Y^k(\eta)\lf(-\phi_n^{r,k}(v)+h_n^{r,k}(u)\rt).
  \end{equation}
  For $k=0$, we have
  \begin{equation}
    \label{eq:phi-1}
    \phi_{-1}^{r,0}(v) = \frac{\eta^r-\etatil^r}{2r} = \frac{\Db-\Da}{2r},
  \end{equation}
  \begin{equation}
    \label{eq:Da-in-phi}
    \Da = 1-r\phi_{-1}^{r,0}(v) - r h_{-1}^{r,0}(u),\quad
    \Db = 1+r\phi_{-1}^{r,0}(v) - r h_{-1}^{r,0}(u),
  \end{equation}
  and
  \begin{displaymath}
    \phi_{n+1}^{r,0}(v) = -\dfrac{r}{v}\frac{d}{dv}\phi_n^{r,0}(v), \qquad h_{n+1}^{r,0}(u) =
    -r\frac{ d}{du} h_{n}^{r,0}(u).
  \end{displaymath}
\end{rem}

\subsection{The local Galois averaging}

The shape of the Eynard-Orantin integral
recursion (\ref{eq:r-EO}), together with the integration
kernel given by
 (\ref{eq:kernel-in-eta}) 
 and the local Galois conjugation 
 of (\ref{eq:v and -v}), suggests that the
 residue evaluation of the right-hand-side
 of (\ref{eq:r-EO}) is equivalent to the 
 local Galois averaging with respect to the
 single variable $z$. 
 Since we already have a topological recursion
 in the form of the partial differential equation
 (\ref{eq:diffrecursion}), it is natural to 
 expect that the local Galois averaging of
 (\ref{eq:diffrecursion}) should produce 
 (\ref{eq:r-EO}). 
 In this subsection we apply 
 the following three algebraic
 operations to the differential equation
 (\ref{eq:diffrecursion}). 
 \begin{enumerate}
 \item Local Galois averaging with respect to 
 the first variable $\eta_1$. This means that for
 a meromorphic function $f(\eta_1)$ defined
 on $U_j$, we apply 
 \begin{displaymath}
  \pi_*: f(\eta_1)\,\mapsto\, \frac{f(\eta_1) + f(\etatil_1)}{2}.
\end{displaymath}
\item Extract the part of the function that is 
symmetric with respect to the local 
Galois conjugation (\ref{eq:v and -v}).
In this process we use   
Proposition~\ref{prop:analytic-properties-phi-h}
and (\ref{eq:Da-in-phi}).
\item To make the matching of our 
formula with the integral
recursion manifest, we then multiply by the 
factor 
$$
\frac{v_1\,dv_1}{r\phi_{-1}^{r,0}(v_1)}.
$$
Because of (\ref{eq:Da,Db}) and (\ref{eq:phi-1}),  $\phi_{-1}^{r,0}(v_1) = O(v_1)$ near $v_1=0$. 
Therefore,
the multiplication factor $\frac{v_1}{r\phi_{-1}^{r,0}(v_1)}$ is a holomorphic function around each 
critical point $\a_j$.
 \end{enumerate}

Our starting point is the following
Laplace transform formula
(\ref{eq:diffrecursion-in-eta})
of the cut-and-join equation (\ref{eq:CAJ}),
written in terms of the Hodge integrals
(\ref{eq:Hodge})
and the auxiliary functions (\ref{eq:xi-in-eta})
incorporating the ELSV-type
formula (\ref{eq:ELSV}) of \cite{JPT}.

\begin{multline}
\label{eq:LF-in-xi}
  \sum_{\substack{|\vk|\equiv 0\\ |\vd|\le 3g-3+n}}
  r^{\frac{|\vk|}{r}}\,
  \la\ta_{\vd}\,\Lam\ra^{r,\vk}\,
  \lf[
  (2g-2+n)\,\xi_{\vd}^{r,\vk}(\eta_{[n]})
  +\frac1r\sum_{i=1}^n
  (1-\eta_i^r)\,
  \xi_{\ell_i+1}^{r,k_i}(\eta_i)\,
  \xi_{\dWithoutI}^{r,\kWithoutI}(\etaWithoutI)
  \rt]
  \\
  =
  \sum_{i<j}
  \sum_{\substack{a+\lf|\kWithoutIJ\rt|\equiv0 \\ m+\lf|\dWithoutIJ\rt|\le 3g-4+n}}
  r^{\frac{{a+\lf|\kWithoutIJ\rt|}}{r}}\,
  \la\ta_m\,\ta_{\dWithoutIJ}\,\Lam\ra^{(r),(a,\kWithoutIJ)}\,
  \frac{1}{\eta_i-\eta_j}
  \\
  \times
  \lf[
  \frac{\eta_j\,\xi_{m+1}^{r,a}(\eta_i)}{1-\eta_i^r}\,
  -\frac{\eta_i\,\xi_{m+1}^{r,a}(\eta_j)}{1-\eta_j^r}\,
  \rt]
  \xi_{\dWithoutIJ}^{r,\kWithoutIJ}(\etaWithoutIJ)
  \\
  +
  \frac{r}{2}
  \sum_{i=1}^n
  \sum_{\substack{a+b+\lf|\kWithoutI\rt|\equiv 0\\m+\ell+\lf|\dWithoutI\rt|\le 3g-5+n}}
  r^{\frac{a+b+\lf|\kWithoutI\rt|}{r}}\,
  \la\ta_m\,\ta_\ell\,\ta_{\dWithoutI}\,\Lam\ra^{(r),(a,b,\kWithoutI)}\,
  \xi_{m+1}^{r,a}(\eta_i)\,
  \xi_{\ell+1}^{r,b}(\eta_i)\,
  \xi_{\dWithoutI}^{r,\kWithoutI}(\etaWithoutI)
  \\
  +
  \frac{r}{2}
  \sum_{i=1}^n
  \sum_{\substack{g_1+g_2=g\\I\sqcup J=[\hat{i}]}}^{\text{stable}}
  \sum_{\substack{
      a+|k_I|\equiv0\\
      b+|k_J|\equiv0\\
      m+|\ell_I|\le 3g_1-2+|I|\\
      \ell+|\ell_J|\le 3g_2-2+|J|}}
  r^{\frac{a+b+\lf|\kWithoutI\rt|}{r}}
  \la\ta_m\,\ta_{\ell_I}\,\Lam\ra^{r,(a,k_I)}
  \la\ta_\ell\,\ta_{\ell_J}\,\Lam\ra^{r,(b,k_J)}
  \\
  \times
  \xi_{m+1}^{r,a}(\eta_i)\,
  \xi_{\ell+1}^{r,b}(\eta_i)\,
  \xi_{\dWithoutI}^{r,\kWithoutI}(\etaWithoutI).
\end{multline}
Here the bound of the summation indices are
 $0\le a,b<r$ and $m,\ell\ge0$.
 Let us now apply the three algebraic operations
 listed above to (\ref{eq:LF-in-xi}).

The left-hand-side of 
(\ref{eq:LF-in-xi}) produces 
\begin{equation}\label{eq:LF-changes-in-1st-line}
  \sum_{\substack{\lf|\vk\rt|\equiv0\\ \lf|\vd\rt|\le 3g-3+n}}
  r^{\frac{\lf|\vk\rt|}{r}}\,
  \la\tau_{\vd}\,\Lam\ra^{(r),\vk}~
  Y^{k_1}(\eta_1)
  \lf[  
  -\frac{v_1}{r}\,\phi_{\ell_1+1}^{r,k_1}(v_1)\,dv_1
  +\mathcal{H}(v_1)\,dv_1
  \rt]
  \xi_{\dWithoutOne}^{r,\kWithoutOne}(\etaWithoutOne),
\end{equation}
where
$\mathcal{H}(v_1)$ is a holomorphic function in $v_1$ near $v_1=0$ that comes from the
third operation.
We calculate, using,
(\ref{eq:recursionPhi}) and (\ref{eq:xi-decompose-in-phi-h}):
\begin{multline*}
  -\frac{v_1}{r}\,
  Y^{k_1}(\eta_1)\,
  \phi_{\ell_1+1}^{r,k_1}(v_1)\,dv_1
  =
  -\frac{v_1}{r}\,
  Y^{k_1}(\eta_1)\,
  \lf[E^{r,k_1}(u_1) -\frac{r}{v_1} \frac{d}{dv_1}\rt]\,
  \phi_{\ell_1}^{r,k_1}(v_1)\,dv_1
  \\
  = \,
  Y^{k_1}(\eta_1)\,
  d\phi_{\ell_1}^{r,k_1}(v_1)
  -\frac{v_1}{r} 
  Y^{k_1}(\eta_1)\,
  E^{r,k_1}(u_1)\,
  \phi_{\ell_1}^{r,k_1}(v_1)\,dv_1 
  \\
  = \,
  d\xi_{\ell_1}^{r,k_1}(\eta_1) 
  - d\lf(\,Y^{k_1}(\eta_1) 
  h_{\ell_1}^{r,k_1}(u_1)\,\rt)
  \\
  -\lf(\,dY^{k_1}(\eta_1)\,\rt) 
  \phi_{\ell_1}^{r,k_1}(v_1)
  -\frac{1}{r} 
  Y^{k_1}(\eta_1)\,
  E^{r,k_1}(u_1)\,
  \phi_{\ell_1}^{r,k_1}(v_1)\;v_1\,dv_1.
\end{multline*}
The last two terms of the above
formula cancel due to (\ref{eq:E-func}) and 
\begin{equation}
  \label{eq:diff-relations}
  du = vdv = r\frac{\eta^r-1}{\eta}d\eta = r\frac{\etatil^r-1}{\etatil}d\etatil.
\end{equation}
Notice that $Y^{k_1}(\eta_1)$, $\mathcal{H}(v_1)$, and $h_{\ell_1}^{r,k_1}(u_1)$ are holomorphic
functions in $\eta_1\in U$, where $U$ is the union $\cup_{j=1}^r U_j$. Thus the left-hand side
of (\ref{eq:LF-in-xi}) simply takes the form
\begin{equation}
  \label{eq:lhs-final}
  \sum_{\substack{\lf|\vk\rt|\equiv0\\ \lf|\vd\rt|\le3g-3+n}}
  r^{\frac{\lf|\vk\rt|}{r}}\,
  \la \tau_{\vd}\,\Lam \ra^{(r),\vk}\,
  \lf[
  d\xi_{\ell_1}^{r,k_1}(\eta_1)
  +\mathcal{H}_1(\eta_1)\,d\eta_1
  \rt]\,
  \xi_{\dWithoutOne}^{r,\kWithoutOne}(\etaWithoutOne),
\end{equation}
with a holomorphic function 
$\mathcal{H}_1$ in $\eta_1$.

Again appealing to (\ref{eq:diff-relations}), 
we calculate the result of the three operations
on the first term of
the right-hand side of (\ref{eq:LF-in-xi}) as
\begin{multline}\label{eq:LF-changes-in-2nd-line}
    \half\,
    \sum_{1<j}
    \sum_{\substack{
        a+\lf|\kWithoutOneJ\rt|\equiv0\\
        m+\lf|\dWithoutOneJ\rt|\le3g-4+n}}
    r^{\frac{a+\lf|\kWithoutOneJ\rt|}{r}}
    \la
    \tau_m\,
    \tau_{\dWithoutOneJ}\,
    \Lam
    \ra^{(r),(a,\kWithoutOneJ)}
    \\
    \times
    \lf[
    \frac{-\xi_{m+1}^{r,a}(\eta_1)\,d\,\eta_1}{(\eta_1-\eta_j)\,\phi_{-1}^{r,0}(v_1)}
    +
    \frac{-\xi_{m+1}^{r,a}(\etatil_1)\,d\,\etatil_1}{(\etatil_1-\eta_j)\,\phi_{-1}^{r,0}(v_1)}
    +\Omega_{1,m+1}^{r,a}(\eta_1) 
    +\Omega(\eta_1,\eta_j)\,
    \frac{\xi_{m+1}^{r,a}(\eta_j)}{1-\eta_j^r}
    \rt]
    \\
    \times
    \xi_{\ell_{[\hat 1,\hat j]}}^{r,k_{[\hat 1,\hat
        j]}}
    (\eta_{[\hat 1,\hat j]})
    +\mathcal{H}_2(\eta_1)\,d\eta_1\,\mathcal{F}(\etaWithoutOne).
\end{multline}
Here 
$\mathcal{H}_2(\eta_1)\in\cO(U)$.
$\mathcal{F}(\etaWithoutOne)$ is a function in
$\eta_2,\dots,\eta_n$. $\Omega_{1,m+1}^{r,a}(\eta_1)$ is a meromorphic differential in $\eta_1$
defined by
\begin{displaymath}
  \Omega_{1,m+1}^{r,a}(\eta_1) 
  := \frac{\xi_{m+1}^{r,a}(\eta_1)\,d\,\eta_1}{\eta_1\,\phi_{-1}^{r,0}(v_1)}
  + \frac{\xi_{m+1}^{r,a}(\etatil_1)\,d\,\etatil_1}{\etatil_1\,\phi_{-1}^{r,0}(v_1)},
\end{displaymath}
and $\Omega(\eta_1,\eta_j)$ is a holomorphic differential in $\eta_1$ 
\begin{displaymath}
  \Omega(\eta_1,\eta_j) = 
  \lf[
  \frac{(1-\eta_1^r)\,d\,\eta_1}{(\eta_1-\eta_j)\,\phi_{-1}^{r,0}(v_1)}
  +\frac{(1-\etatil_1^r)\,d\,\etatil_1}{(\etatil_1-\eta_j)\,\phi_{-1}^{r,0}(v_1)}
  \rt]\,
  \frac{\xi_{m+1}^{r,a}(\eta_j)}{1-\eta_j^r}.
\end{displaymath}
Without loss of generality,
we can assume that  $\eta_j\not\in U$.
Then $\Omega(\eta_1,\eta_j)$ is a holomorphic differential with respect to
 $\eta_1\in U$, which follows from the 
local behavior of 
$\frac{(1-\eta_1^r)}{\phi_{-1}^{r,0}(v_1)}$
and
$\frac{(1-\etatil_1^r)}{\phi_{-1}^{r,0}(v_1)}$
coming from 
 (\ref{eq:v-in-Delta}) and
(\ref{eq:phi-1}).

The operation on 
the second and the third terms of the right-hand side
of  (\ref{eq:LF-in-xi}) produces
\begin{multline}
  \label{eq:LF-changes-in-3rd-line}
    \sum_{\substack{a+b+\lf|\kWithoutOne\rt|\equiv0\\ m+\ell+\lf|\dWithoutOne\rt|\le 3g-5+n}}
    r^{\frac{a+b+\lf|\kWithoutOne\rt|}{r}}\,
    \la\tau_m\,\tau_\ell\,\tau_{\dWithoutOne}\,\Lam
    \ra^{(r),(a,b,\kWithoutOne)} 
    \\
    \times
    \lf[
    \frac{
      Y^a(\eta_1)\,
      Y^b(\eta_1)\,    
      \phi_{m+1}^{r,a}(v_1)\,
      \phi_{\ell+1}^{r,b}(v_1)\,
      v_1\,d\,v_1}
    {2\phi_{-1}^{r,0}(v_1)}\rt]
    \xi_{\dWithoutOne}^{r,\kWithoutOne}(\etaWithoutOne)
    +\mathcal{H}_3(\eta_1)\,d\,\eta_1\,\mathcal{F}_1(\etaWithoutOne)
    \\
    +
    \sum_{\substack{g_1+g_2=g\\I\sqcup J=[\hat1]}}^{\text{stable}}
    \sum_{\substack{
        a+\lf|k_I\rt|\equiv0\\
        b+\lf|k_J\rt|\equiv0\\
        m+\lf|\ell_I\rt|\le 3g_1-2+|I|\\
        \ell+\lf|\ell_J\rt|\le 3g_2-2+|J|}}
    r^{\frac{a+b+|\kWithoutOne|}{r}}\,
    \la\tau_m\,\tau_{\ell_I}\,\Lam \ra^{(r),(a,k_I)}\,
    \la\tau_\ell\,\tau_{\ell_J}\,\Lam\ra^{(r),(a,k_J)}\\
    \times
    \lf[
    \frac{
      Y^a(\eta_1)\,
      Y^{b}(\eta_1)\,
      \phi_{m+1}^{r,a}(v_1)\,
      \phi_{\ell+1}^{r,b}(v_1)\,
      v_1\,d v_1}
    {2\phi_{-1}^{r,0}(v_1)}
    \rt]
    \xi_{\dWithoutOne}^{r,\kWithoutOne}(\etaWithoutOne)
    +
    \mathcal{H}_4(\eta_1)\,d\eta_1\,
    \mathcal{F}_2(\etaWithoutOne).
\end{multline}
Here $\mathcal{H}_3(\eta_1)$ and 
$\mathcal{H}_4(\eta_1)$ are in $\cO(U)$,
and 
$\mathcal{F}_1(\etaWithoutOne)$ 
and 
$\mathcal{F}_2(\etaWithoutOne)$
are  functions in $\eta_2,\dots,\eta_n$.

\subsection{The residue calculation}

Recall that the central idea of \cite{EMS}
to prove the Hurwitz number conjecture
of \cite{BM} 
is to relate the \emph{principal part}
of the free energies
with the residue calculation of the
Eynard-Orantin integral recursion formula.
Since the free energies
in our case have $r$ distinct poles, we need a more
general notion of the principal part for a 
meromorphic function with many poles
(see for example, \cite{Vostry}).
In this subsection we derive the key formula
(\ref{eq:residue formula}) 
for the residue calculations we need.

\begin{Def}
Let us denote by $U=\cup_{j=1}^r U_j$
the union of the local neighborhood of the 
critical point $\a_j$ for all $j$. We define
an $\cO(U)$-module $\cM_U$ by
$$
\cM_U = \left\{
m(\eta)\;\left|\;m(\eta) = \frac{h(\eta)}{(1-\eta^r)^k},
k\ge 0, h(\eta)\in \cO(U).
\right.\right\}
$$
\end{Def}

Following \cite{Vostry}, we define

\begin{Def}
Let $p(\eta)\in \bC[\eta]$ be a non-constant
polynomial, and $U\subset \bC$ an open subset. 
   Two functions $f,g\in \cO(U)$ are 
  said to be congruent modulo $p$ (denoted by 
  $f\equiv g\mod p$)
   if there is 
  $q(\eta)\in \cO(U)$ such that
  \begin{displaymath}
    f = g + p\,q.
  \end{displaymath}
\end{Def}

\begin{prop}[\cite{Vostry}]
Under the same condition as above, suppose that
$p(\eta)$ has all its zeros in $U$.  
  Then for every 
   holomorphic function $f(\eta)\in\cO(U)$,
     there is a unique polynomial $r(\eta)$
     such that
     $$
     f \equiv r\mod p\qquad{\text{and}}\qquad
     \deg r(\eta) < \deg p(\eta).
     $$
     \end{prop}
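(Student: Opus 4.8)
The plan is to recognize this statement as a holomorphic version of division with remainder, which reduces to Hermite interpolation at the zeros of $p$. First I would set up notation. Since $p$ is a non-constant polynomial whose zeros all lie in $U$, write its distinct roots as $\zeta_1,\dots,\zeta_m\in U$ with respective multiplicities $e_1,\dots,e_m$, so that $d:=\deg p=\sum_{i=1}^m e_i$. The key reformulation is that, for $f,g\in\cO(U)$, the congruence $f\equiv g\mod p$ holds if and only if $f-g$ vanishes to order at least $e_i$ at each $\zeta_i$. Indeed, $f-g=p\,q$ with $q\in\cO(U)$ forces this vanishing, and conversely such vanishing makes $q:=(f-g)/p$ holomorphic on $U$ by Riemann's removable singularity theorem, since $p$ is nonvanishing on $U\setminus\{\zeta_1,\dots,\zeta_m\}$ while the zeros of $f-g$ absorb those of $p$ at each $\zeta_i$.

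With this reformulation, existence becomes a Hermite interpolation problem. I would seek a polynomial $r(\eta)$ of degree $<d$ whose jet at each $\zeta_i$ agrees with that of $f$ up to order $e_i-1$, that is $r^{(\kappa)}(\zeta_i)=f^{(\kappa)}(\zeta_i)$ for $0\le\kappa<e_i$ and all $i$. Here $f^{(\kappa)}(\zeta_i)$ makes sense because $f\in\cO(U)$ has a convergent Taylor expansion at each root. This imposes exactly $d$ linear conditions on the $d$ coefficients of a polynomial of degree $<d$, and the associated confluent Vandermonde matrix is invertible, so a unique such $r$ exists. By construction $f-r$ vanishes to order $\ge e_i$ at each $\zeta_i$, hence by the reformulation above $f\equiv r\mod p$, which establishes existence.

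For uniqueness, suppose $r_1$ and $r_2$ both satisfy the conclusion. Then $r_1-r_2$ is a polynomial of degree $<d$ with $r_1-r_2\equiv0\mod p$, so $r_1-r_2$ vanishes to order $\ge e_i$ at each $\zeta_i$. This accounts for $\sum_{i=1}^m e_i=d$ zeros counted with multiplicity, which is impossible for a nonzero polynomial of degree $<d$; therefore $r_1=r_2$.

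The argument is essentially routine, and the only points requiring care are bookkeeping ones. The main mild obstacle is the passage between the algebraic congruence in the ring $\cO(U)$ and the analytic vanishing conditions: one must verify that divisibility by $p$ inside $\cO(U)$ is genuinely equivalent to prescribed vanishing orders, which uses that $U$ contains all zeros of $p$ together with the removable singularity theorem to return the quotient $q$ to $\cO(U)$. In the application of the paper, $p(\eta)=1-\eta^r$ has one simple zero $\a_j$ in each component $U_j$ of $U=\cup_{j=1}^r U_j$, so the interpolation reduces to ordinary Lagrange interpolation matching the $r$ values $f(\a_j)$, and the proposition simply asserts the existence of the unique interpolating polynomial of degree $<r$.
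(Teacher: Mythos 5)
Your proof is correct, but there is nothing in the paper to compare it against: the proposition is stated with a citation to Vostr\'y \cite{Vostry}, and the paper gives no proof of its own. Your argument is the natural self-contained one. You convert congruence modulo $p$ in the ring $\cO(U)$ into analytic vanishing conditions — $f\equiv g \mod p$ if and only if $f-g$ vanishes to order at least $e_i$ at each root $\zeta_i$ of $p$, with Riemann's removable-singularity theorem supplying the nontrivial direction — and then existence is Hermite interpolation (invertibility of the confluent Vandermonde system), while uniqueness is the count that a nonzero polynomial of degree $<d$ cannot have $d$ zeros with multiplicity. Both halves are sound, and you correctly identify where the hypothesis that \emph{all} zeros of $p$ lie in $U$ enters: it makes the number of interpolation conditions equal to $\deg p$, which is exactly what forces uniqueness. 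One small correction to your closing remark: in the paper's actual application (Definition~\ref{def:proper-rational}) the modulus is $p(\eta)=(1-\eta^r)^k$ with $k\ge 0$, not merely $1-\eta^r$; for $k\ge 2$ each root $\a_j$ has multiplicity $k$, so the relevant interpolation is genuinely Hermite — matching the $k$-jet of $f$ at each of the $r$ roots — rather than plain Lagrange interpolation. This does not affect your proof, which you wrote for arbitrary multiplicities.
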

     We denote this unique remainder polynomial by
\begin{equation}
\label{eq:remainder}
  r(\eta) = \lfloor f(\eta)\rfloor_p .
\end{equation}
This defines a natural $\bC$-algebra
homomorphism
$$
\cO(U)\owns f\longmapsto \lfloor f(\eta)\rfloor_p
\in \bC[\eta]\big/(p),
$$
which is called the \emph{reduction} of
  $f$ modulo $p$.

\begin{Def}\label{def:proper-rational}
  Let $m(\eta) = h(\eta)/p(\eta)$
   be a meromorphic function in $\cM_U$,
    where
  $h(\eta)\in \cO(U)$ and $p(\eta)=(1-\eta^r)^k$, $k\ge0$.   We define the following symbol
  \begin{equation}
    \label{eq:proper-rational}
    \lcb m(\eta) \rcb_\eta := 
    \frac{\lfloor h(\eta)\rfloor_p}{p(\eta)}
    \in\bC(\eta).
  \end{equation}
  Thus we have a linear map, 
  which we simply call the projection to the 
  \emph{principal part} 
  $$
  \{\;\cdot\;\}_\eta:\cM_U\lrar \bC(\eta).
  $$
 The principal part $\lcb m(\eta) \rcb_\eta$
 of  a meromorphic function $m(\eta)$ is the ``proper rational function
  part"  of $m=h/p$.  If $k=0$, then we define
  the principal part to be $0$.
\end{Def}

\begin{rem} \label{rem:properties-proper-rational}
  From the definition
  it is obvious  that
  for every $m(\eta)\in\cM_U$, we
  have
      \begin{displaymath}
    m(\eta)-\{m(\eta)\}_\eta \in \cO(U).
       \end{displaymath}
       Thus $\{m(\eta)\}_\eta$ behaves much like
       the principal part of a meromorphic function
       at a pole. 
The image $\{m(\eta)\}_\eta
\in\bC(\eta)$ is always globally defined
    on $\bP^1$, even though $m(\eta)$ is  defined locally on $U$, and $\{m(\eta)\}_\eta$ has poles only
    at $\a_j$.
 \end{rem}
 
 The following lemma plays the key role
 in connecting the residue of the 
 Eynard-Orantin recursion formula 
 and taking the principal part.

\begin{lem}\label{lem:residue-calculation}
  For any  element $m(\zeta)\in\cM_U$ 
  and $\eta\in\bC$ such that $\eta\neq\a_j$,
  $j=1,\dots,r$, we have
  \begin{equation}
  \label{eq:residue formula}
    \sum_{j=1}^r \Res_{\zeta=\a_j}
    \frac{m(\zeta)}{\zeta-\eta} = -\lcb m(\eta) \rcb_\eta.
  \end{equation}
\end{lem}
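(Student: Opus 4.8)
The plan is to reduce the local residue computation to the global residue theorem on $\bP^1$, after first replacing the only-locally-defined function $m$ by its principal part, which \emph{does} extend to a rational function on all of $\bP^1$. First I would split $m = \lcb m(\zeta)\rcb_\zeta + \big(m(\zeta) - \lcb m(\zeta)\rcb_\zeta\big)$. By Remark~\ref{rem:properties-proper-rational}, the remainder $g(\zeta) := m(\zeta) - \lcb m(\zeta)\rcb_\zeta$ lies in $\cO(U)$, hence is holomorphic near each critical point $\a_j$. Since $\eta \neq \a_j$, the form $\frac{g(\zeta)}{\zeta-\eta}\,d\zeta$ is holomorphic at every $\a_j$ and so contributes no residue there. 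Consequently
\[
\sum_{j=1}^r \Res_{\zeta=\a_j}\frac{m(\zeta)}{\zeta-\eta}
=\sum_{j=1}^r \Res_{\zeta=\a_j}\frac{\lcb m(\zeta)\rcb_\zeta}{\zeta-\eta}.
\]

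The crucial point is that, unlike $m$ itself, the principal part $R(\zeta):=\lcb m(\zeta)\rcb_\zeta$ is a globally defined rational function on $\bP^1$. Writing $m = h/(1-\zeta^r)^k$ with $h\in\cO(U)$, Definition~\ref{def:proper-rational} gives $R = \lfloor h\rfloor_p/p$ with $p=(1-\zeta^r)^k$, and the degree bound $\deg\lfloor h\rfloor_p < \deg p = rk$ guarantees that $R$ is a \emph{proper} rational function: its only poles are the $\a_j$, and $R(\zeta)=O(\zeta^{-1})$ as $\zeta\to\infty$. Thus $\omega := \frac{R(\zeta)}{\zeta-\eta}\,d\zeta$ is an honest meromorphic $1$-form on $\bP^1$ whose poles lie only among $\a_1,\dots,\a_r$, the point $\zeta=\eta$, and possibly $\zeta=\infty$.

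Next I would apply the residue theorem on $\bP^1$, which states that the sum of all residues of $\omega$ is zero. At the simple pole $\zeta=\eta$ the residue is $R(\eta)=\lcb m(\eta)\rcb_\eta$, since $R$ is holomorphic there ($\eta\neq\a_j$). At infinity the residue vanishes: as $R(\zeta)=O(\zeta^{-1})$ and $(\zeta-\eta)^{-1}=O(\zeta^{-1})$, the integrand is $O(\zeta^{-2})$, so in the coordinate $w=1/\zeta$ the form $\omega$ is holomorphic at $w=0$. Adding up, $\sum_{j}\Res_{\zeta=\a_j}\omega = -R(\eta) = -\lcb m(\eta)\rcb_\eta$, which combined with the first display yields exactly \eqref{eq:residue formula}.

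The only delicate step is the bookkeeping at infinity: one must check that the strict degree inequality built into the reduction $\lfloor\,\cdot\,\rfloor_p$ forces $R$ to vanish to first order at $\zeta=\infty$, so that $\Res_{\zeta=\infty}\omega=0$; this is where the definition of the principal part is used in an essential way (the degenerate case $k=0$, where $R\equiv 0$, is consistent since then $m\in\cO(U)$ has no residues at the $\a_j$). Everything else is a routine combination of the global residue theorem with the holomorphic splitting $m = R + g$ supplied by Remark~\ref{rem:properties-proper-rational}.
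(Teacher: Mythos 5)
Your proof is correct and follows essentially the same route as the paper's: split $m$ into its principal part plus a holomorphic remainder (which contributes no residues at the $\a_j$), then exploit that the principal part is a globally defined proper rational function and sum all residues on $\bP^1$ — the paper phrases this last step as $\lim_{R\to\infty}\oint_{\Gamma_R}=0$, which is the same statement as your vanishing residue at infinity. If anything, you make explicit the $O(\zeta^{-2})$ decay at infinity that the paper leaves implicit in the word ``proper,'' but this is a presentational difference, not a different argument.
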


\begin{proof}
Let $\gamma_j$ be a
  small loop in $U_j$ centered at $\a_j$, 
   $\gamma_\eta$ a small loop around $\eta$, and
  $\Gamma_R$ a 
  large circle enclosing all $\a_j$ and $\eta$ with radius $R \gg 1$ (see Figure \ref{fig:path}).
  Then
  \begin{align*}
    &\sum_{j=1}^r \Res_{\zeta=\a_j}\frac{m(\zeta)}{\zeta-\eta} = 
    \sum_j\frac{1}{2\pi i}\oint_{\gamma_j}\frac{m(\zeta)}{\zeta-\eta}\,d\zeta
     \\
    =&\sum_j\frac{1}{2\pi i}\oint_{\gamma_j}\frac{\lcb m(\zeta)\rcb_\zeta}{\zeta-\eta}\,d\zeta + 
    \sum_j\frac{1}{2\pi i}\oint_{\gamma_j}\frac{m(\zeta)-\lcb m(\zeta)\rcb_\zeta}{\zeta-\eta}\,d\zeta\\
    =&\sum_j\Res_{\zeta=\a_j}\frac{\lcb m(\zeta)\rcb_\zeta}{\zeta-\eta},
  \end{align*}
  because $\frac{m(\zeta)-\lcb m(\zeta)\rcb_\zeta}{\zeta-\eta}$ does not have any pole in 
  any of the $U_j$'s.
Noting that
 $\frac{\lcb m(\zeta)\rcb_\zeta}{\zeta-\eta}$
 is a rational function with poles only at $\eta$ and
  $\a_j$, $j=1,\dots,r$,
 we calculate 
  \begin{align*}
    0 =& \lim_{R\rar\infty}\frac1{2\pi i}\oint_{\Gamma_R} \frac{\lcb m(\zeta)\rcb_\zeta}{\zeta-\eta}\,d\zeta \\
    =&\sum_j\Res_{\zeta=\a_j}\frac{\lcb m(\zeta)\rcb_\zeta}{\zeta-\eta} +
    \Res_{\zeta=\eta}\frac{\lcb m(\zeta)\rcb_\zeta}{\zeta-\eta} \\
    =&\sum_j\Res_{\zeta=\a_j}\frac{\lcb m(\zeta)\rcb_\zeta}{\zeta-\eta} +
    \lcb m(\eta)\rcb_\eta.
  \end{align*}
This completes the proof of 
(\ref{eq:residue formula}).
\end{proof}

\begin{figure}[htb]
\centerline{\epsfig{file=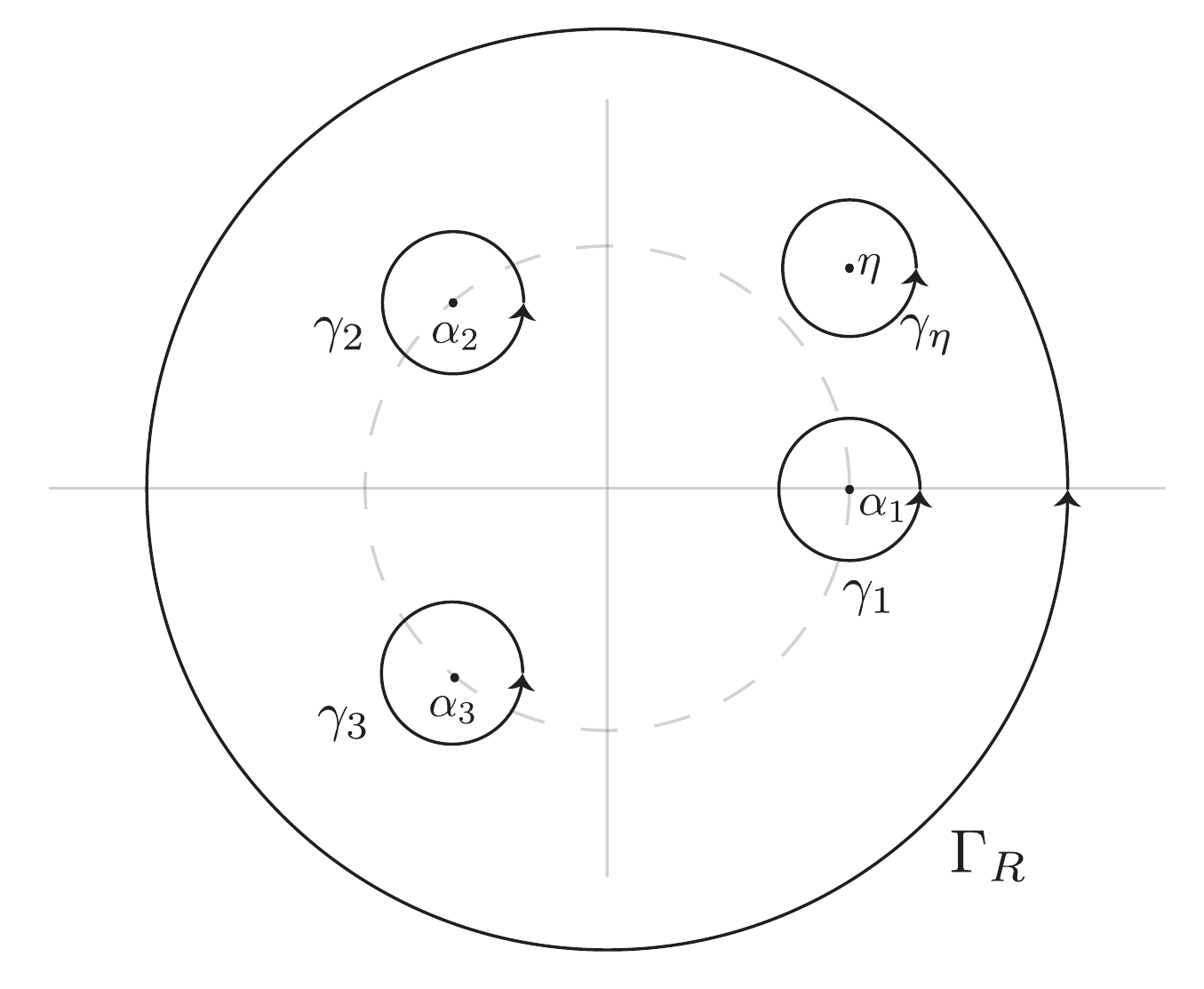, 
width=3in}}
\caption{Integration contours.}
    \label{fig:path}
\end{figure}

\subsection{Proof of Theorem~\ref{thm:r-EO}}

We are now ready to complete the proof 
of Theorem~\ref{thm:r-EO}. 
The operation we wish to apply to 
(\ref{eq:lhs-final}), (\ref{eq:LF-changes-in-2nd-line}),
and (\ref{eq:LF-changes-in-3rd-line})
is 
$$
\left(
d_{\eta_2}\cdots d_{\eta_n}
\right)
\circ
\lcb\bullet\rcb_{\eta_1}.
$$
This means we first calculate the 
principal part of the quantities with respect to
$\eta_1$, and then 
apply the exterior differentiations with respect
to $\eta_2,\dots,\eta_n$. 
We obtain 
\begin{multline}
  \label{eq:LF-final-form}
    \sum_{\substack{\lf|\vk\rt|\equiv0\\ \lf|\vd\rt|\le3g-3+n}}
    r^{\frac{\lf|\vk\rt|}{r}}\,
    \la \tau_{\vd}\,\Lam \ra^{(r),\vk}\,
    d\xi_{\vd}^{r,\vk}(\eta_1,\dots,\eta_n)\\
    =
    \sum_{1<j}
    \sum_{\substack{
        a+\lf|\kWithoutOneJ\rt|\equiv0\\
        m+\lf|\dWithoutOneJ\rt|\le3g-4+n}}
    r^{\frac{a+\lf|\kWithoutOneJ\rt|}{r}}
    \la
    \tau_m\,
    \tau_{\dWithoutOneJ}\,
    \Lam
    \ra^{(r),(a,\kWithoutOneJ)}\,
    R_m^{r,a}(\eta_1,\eta_j)\,
    \otimes\,
    d\xi_{\dWithoutOneJ}^{r,\kWithoutOneJ}
    (\etaWithoutOneJ)\\
    +
    \sum_{\substack{a+b+\lf|\kWithoutOne\rt|\equiv0\\ m+\ell+\lf|\dWithoutOne\rt|\le 3g-5+n}}
    r^{\frac{a+b+\lf|\kWithoutOne\rt|}{r}}\,
    \la\tau_m\,\tau_\ell\,\tau_{\dWithoutOne}\,\Lam
    \ra^{(r),(a,b,\kWithoutOne)}\,
    R_{m,\ell}^{r,a,b}(\eta_1)
    \otimes\,
    d\xi_{\dWithoutOne}^{r,\kWithoutOne}(\etaWithoutOne)\\
    +   
    \sum_{\substack{g_1+g_2=g\\I\sqcup J=[\hat1]}}^{\text{stable}}
    \sum_{\substack{
        a+\lf|k_I\rt|\equiv0\\
        b+\lf|k_J\rt|\equiv0\\
        m+\lf|\ell_I\rt|\le 3g_1-2+|I|\\
        \ell+\lf|\ell_J\rt|\le 3g_2-2+|J|}}
    r^{\frac{a+b+\lf|\kWithoutOne\rt|}{r}}\,
    \la\tau_m\,\tau_{\ell_I}\,\Lam \ra^{(r),(a,k_I)}\,
    \la\tau_\ell\,\tau_{\ell_J}\,\Lam\ra^{(r),(a,k_J)}\,
    \\
    \times
    R_{m,\ell}^{r,a,b}(\eta_1)\otimes
    d\xi_{\dWithoutOne}^{r,\kWithoutOne}(\etaWithoutOne),
\end{multline}
where $d\xi_{\ell_I}^{r,k_I}(\eta_I)=\bigotimes_{i\in
  I}d\xi_{\ell_i}^{r,k_i}(\eta_i)$,
\begin{equation}
  \label{eq:R-m-l-r-a-b}
  R_{m,\ell}^{r,a,b}(\eta_1) :=
  \lcb
  \frac{
    Y^a(\eta_1)\,
    Y^b(\eta_1)\,    
    \phi_{m+1}^{r,a}(v_1)\,
    \phi_{\ell+1}^{r,b}(v_1)\,
    v_1\,dv_1}
  {2\phi_{-1}^{r,0}(v_1)}\rcb_{\eta_1},
\end{equation}
and
\begin{equation}
  \label{eq:R-m-r-a}
  R_m^{r,a}(\eta_1,\eta_j):= -
  d_{\eta_j}
  \lcb
  \frac{\xi_{m+1}^{r,a}(\eta_1)\,d\eta_1}{2\,(\eta_1-\eta_j)\,\phi_{-1}^{r,0}(v_1)}
  +
  \frac{\xi_{m+1}^{r,a}(\etatil_1)\,d\etatil_1}{2\,(\etatil_1-\eta_j)\,\phi_{-1}^{r,0}(v_1)}
  \rcb_{\eta_1}.
\end{equation}

\begin{rem}
  The operation of $\lcb\bullet\rcb_{\eta_1}$ in (\ref{eq:R-m-r-a}) and (\ref{eq:R-m-l-r-a-b}) are
  well defined because of the fact that $\etatil_1$ is holomorphic in $\eta_1\in U$,
  (\ref{eq:v-in-Delta}), Proposition~\ref{prop:analytic-properties-phi-h}, and
  (\ref{eq:xi-decompose-in-phi-h}).
\end{rem}

To deduce (\ref{eq:r-EO}) from 
(\ref{eq:LF-final-form}), 
we need the following  formulas.
\begin{prop}
  \begin{align}
    \label{eq:R-m-l-r-a-b-residue}
    R_{m,\ell}^{r,a,b}(\eta_1) 
    &= 
    r\,
    \sum_{j=1} ^r\Res_{\eta=\a_j}
    K_j(\eta,\eta_1)\,
    d\xi_m^{r,a}(\eta)\,
    \otimes\,
    d\xi_\ell^{r,b}(\etatil)
    \\
    \label{eq:R-m-r-a-residue}
    R_m^{r,a}(\eta_1,\eta_i) 
    &= 
    \sum_{j=1}^r \Res_{\eta=\a_j}
    \bigg[K_j(\eta,\eta_1)
    \bigg(
    B(\eta,\eta_i)\, d\xi_{m}^{r,a}(\etatil) +
    B(\etatil,\eta_i)\, d\xi_{m}^{r,a}(\eta)
    \bigg)
    \bigg],
  \end{align}
  where $B(\eta,\eta_i)$ is defined in (\ref{eq:B}).
\end{prop}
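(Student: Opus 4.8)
The plan is to establish both identities by direct residue computation, using the explicit kernel (\ref{eq:kernel-in-eta}), the fundamental bidifferential (\ref{eq:B}), and Lemma~\ref{lem:residue-calculation} as the bridge between the operation $\sum_j\Res_{\eta=\a_j}$ and the principal-part projection $\lcb\bullet\rcb_{\eta_1}$. The first simplification I would carry out is to contract the factor $\frac{1}{d\eta}$ appearing in $K_j(\eta,\eta_1)$ against the $d\eta$-factors supplied by $d\xi_m^{r,a}(\eta)$, using the differential relation $d\xi_m^{r,k}(\eta) = \frac{1-\eta^r}{\eta}\,\xi_{m+1}^{r,k}(\eta)\,d\eta$ that follows from (\ref{eq:xi-in-eta}). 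This cancels the $(1-\eta^r)$ in the denominator of $K_j$, and together with the identity $\etatil^r-\eta^r = -2r\,\phi_{-1}^{r,0}(v)$ of (\ref{eq:phi-1}) it rewrites the kernel-times-integrand as an expression whose only $\eta$-dependent pole factors are $\frac{1}{\eta-\eta_1}-\frac{1}{\etatil-\eta_1}$ coming from the kernel and the $\frac{1}{\phi_{-1}^{r,0}(v)}$ attached to the residue variable.

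Second, I would handle the two-term structure $\frac{1}{\eta-\eta_1}-\frac{1}{\etatil-\eta_1}$ of the kernel by means of the local involution. Because $s_j$ is a biholomorphism of $U_j$ fixing $\a_j$ with $v(\etatil)=-v(\eta)$, the substitution $\eta\mapsto\etatil$ inside $\Res_{\eta=\a_j}$ converts the second term into the first with $\eta$ and $\etatil$ interchanged throughout the remaining factors. Expanding the $\xi$'s via $\xi_n^{r,k}(\eta)=Y^k(\eta)\big(\phi_n^{r,k}(v)+h_n^{r,k}(u)\big)$ and $\xi_n^{r,k}(\etatil)=Y^k(\eta)\big(-\phi_n^{r,k}(v)+h_n^{r,k}(u)\big)$ from (\ref{eq:xi-decompose-in-phi-h}), and invoking the parities of $\phi$, $h$, $Y$, $E$ recorded in Proposition~\ref{prop:analytic-properties-phi-h}, the two contributions recombine so that only the odd $\phi$-parts survive in the polar terms. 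This is what produces the combination $Y^a(\eta_1)Y^b(\eta_1)\phi_{m+1}^{r,a}\phi_{\ell+1}^{r,b}$ in the first formula and the $B$-paired products in the second. The net outcome is that each residue integrand is brought to the form $\frac{m(\eta)}{\eta-\eta_1}\,d\eta$ with $m\in\cM_U$.

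For the $B$-formula I would first observe $B(\eta,\eta_i)=d_{\eta_i}\big(\frac{d\eta}{\eta-\eta_i}\big)$ and $B(\etatil,\eta_i)=d_{\eta_i}\big(\frac{d\etatil}{\etatil-\eta_i}\big)$, so that the exterior derivative $d_{\eta_i}$ factors out of the residue and matches the outer $-d_{\eta_j}$ in the definition (\ref{eq:R-m-r-a}), the surviving residue variable being carried by the $\etatil$-form. At this stage both formulas are reduced to a statement of the shape $\sum_{j}\Res_{\eta=\a_j}\frac{m(\eta)}{\eta-\eta_1}=-\lcb m(\eta_1)\rcb_{\eta_1}$, which is exactly Lemma~\ref{lem:residue-calculation}; applying it yields the principal-part expressions defining $R_{m,\ell}^{r,a,b}$ in (\ref{eq:R-m-l-r-a-b}) and $R_m^{r,a}$ in (\ref{eq:R-m-r-a}). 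Throughout, the holomorphic-in-$U$ remainders (the $\mathcal H$ and $\Omega$ terms collected in (\ref{eq:LF-changes-in-2nd-line})--(\ref{eq:LF-changes-in-3rd-line})) drop out consistently, since $\lcb\bullet\rcb$ annihilates $\cO(U)$ while residues of holomorphic forms vanish.

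I expect the main obstacle to be the precise residue bookkeeping in the local $v$-coordinate: one must verify that, after contracting the kernel, the surviving pole at $\eta=\a_j$ has the order predicted by Proposition~\ref{prop:analytic-properties-phi-h}, track the exact powers of $r$ and the factors of $\half$ so that the first identity carries the overall factor $r$ while the second does not, and confirm that the involution substitution $\eta\mapsto\etatil$ introduces no spurious sign errors in the anti-symmetric $\phi$-parts. Matching these constants and signs, rather than the structural identification itself, is where the genuine care is required.
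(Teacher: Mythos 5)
Your strategy coincides with the paper's own proof: contract the $1/d\eta$ factor of the kernel against $d\xi$, fold the $\frac{1}{\etatil-\eta_1}$ half of the kernel onto the $\frac{1}{\eta-\eta_1}$ half by the involution substitution inside the residue, and then convert $\sum_j \Res_{\eta=\a_j}$ into a principal part via Lemma~\ref{lem:residue-calculation}. For the first identity (\ref{eq:R-m-l-r-a-b-residue}) your outline is complete: the symmetrization produces $\phi_{m+1}^{r,a}\phi_{\ell+1}^{r,b}-h_{m+1}^{r,a}h_{\ell+1}^{r,b}$, and the $hh$-term is annihilated by $\lcb\,\cdot\,\rcb_{\eta_1}$ because $h$ is even and holomorphic while $v_1/\phi_{-1}^{r,0}(v_1)$ is holomorphic; that is exactly what your phrase ``only the odd $\phi$-parts survive in the polar terms'' amounts to.

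For the second identity (\ref{eq:R-m-r-a-residue}), however, there is a genuine gap. After factoring out $d_{\eta_i}$ and applying Lemma~\ref{lem:residue-calculation}, the residue sum does \emph{not} produce the defining expression (\ref{eq:R-m-r-a}); it produces
\begin{equation*}
d\eta_1\otimes d_{\eta_i}
\lcb
\frac{\xi_{m+1}^{r,a}(\etatil_1)}{2\,(\eta_1-\eta_i)\,\phi_{-1}^{r,0}(v_1)}
+\frac{\xi_{m+1}^{r,a}(\eta_1)\,\etatil_1'}{2\,(\etatil_1-\eta_i)\,\phi_{-1}^{r,0}(v_1)}
\rcb_{\eta_1},
\end{equation*}
in which $\xi_{m+1}^{r,a}(\etatil_1)$ sits over the pole at $\eta_1=\eta_i$ and $\xi_{m+1}^{r,a}(\eta_1)$ over the pole at $\etatil_1=\eta_i$: the pairing of the $\xi$-factors with the poles is the opposite of that in (\ref{eq:R-m-r-a}), and the overall sign is opposite as well. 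Reconciling the two is not bookkeeping; it requires the nontrivial identity
\begin{equation*}
\lcb
\lf(\frac{1}{\eta_1-\eta_i}+\frac{\etatil_1'}{\etatil_1-\eta_i}\rt)
\frac{\xi_{m+1}^{r,a}(\eta_1)+\xi_{m+1}^{r,a}(\etatil_1)}{2\,\phi_{-1}^{r,0}(v_1)}
\rcb_{\eta_1}=0,
\end{equation*}
which the paper isolates as a separate lemma inside the proof. This identity cannot be obtained from the parity considerations you invoke: the symmetric combination $\frac{\xi_{m+1}^{r,a}(\eta_1)+\xi_{m+1}^{r,a}(\etatil_1)}{2\,\phi_{-1}^{r,0}(v_1)}$ genuinely has a simple pole at each $\a_j$, and its contribution to the principal part vanishes only because the prefactor $\frac{1}{\eta_1-\eta_i}+\frac{\etatil_1'}{\etatil_1-\eta_i}$ has a zero at each $\a_j$, owing to $\etatil_1'\big|_{\eta_1=\a_j}=-1$. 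Your concluding caveat about tracking signs and constants does not identify this step --- the discrepancy is an exchange of which $\xi$-factor sits over which pole, not a sign slip --- so as written your argument for (\ref{eq:R-m-r-a-residue}) does not close.
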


\begin{proof}
  Let $s_j(\gam_j)$ be the involution image
  of a
  small circle $\gam_j$
  around $\a_j$. We calculate the residue by  contour integration. 
  \begin{multline*}
    \text{R.H.S. of (\ref{eq:R-m-l-r-a-b-residue})}=r\,
    \sum_{j=1} ^r\Res_{\eta=\a_j}
    K_j(\eta,\eta_1)\,
    d\xi_m^{r,a}(\eta)\,
    \otimes\,
    d\xi_\ell^{r,b}(\etatil)
     \\
    = 
    \frac{r^2\,d\eta_1}2 
    \sum_{j}
    \frac1{2\pi i}
    \lf[
    \oint_{\gam_j} 
    \frac{\xi_{m+1}^{r,a}(y)}{(y-\eta_1)(\ytilde^r-y^r)}
    \frac{d\xi_{\ell}^{r,b}(\ytilde)}{d\ytilde}\,d\ytilde
    +\oint_{s_j(\gam_j)}
    \frac{\xi_{m+1}^{r,a}(\ytilde)}{(y-\eta_1)(\ytilde^r-y^r)}
    \frac{d\xi_{\ell}^{r,b}(y)}{dy}\,dy
    \rt]
    \\
    =
    \frac{r^2\,d\eta_1}2 
    \sum_{\a_j}
    \Res_{y=\a_j}
    \frac{
      (1-y^r)
      \lf[
      \xi_{m+1}^{r,a}(y)\,\xi_{\ell+1}^{r,b}(\ytilde)+
      \xi_{m+1}^{r,a}(\ytilde)\,\xi_{\ell+1}^{r,b}(y)
      \rt]
    }{(y-\eta_1)\,y\,(\ytilde^r-y^r)}.
  \end{multline*}
  In the second line we have used the involution to the second contour integral, and in the third
  line we have appealed to  (\ref{eq:diff-relations}).
  Noticing that  
  $$\frac{ (1-y^r) \lf[ \xi_{m+1}^{r,a}(y)\,\xi_{\ell+1}^{r,b}(\ytilde)+
    \xi_{m+1}^{r,a}(\ytilde)\,\xi_{\ell+1}^{r,b}(y) \rt] }{y\,(\ytilde^r-y^r)}
    \in \cM_U,
    $$ 
     we use Lemma \ref{lem:residue-calculation},
  (\ref{eq:xi-decompose-in-phi-h}), 
  and (\ref{eq:phi-1}) 
     to yield
  \begin{align*}
    \text{R.H.S.}&= -\frac{r^2}{2}
    \lcb
    \frac{
      (1-\eta_1^r)
      \lf[
      \xi_{m+1}^{r,a}(\eta_1)\,\xi_{\ell+1}^{r,b}(\etatil_1)+
      \xi_{m+1}^{r,a}(\etatil_1)\,\xi_{\ell+1}^{r,b}(\eta_1)
      \rt]
    }{\eta_1\,(\etatil_1^r-\eta_1^r)}
    \rcb_{\eta_1}\,d\eta_1\\
    &=
    \lcb
    \frac{
      Y^a(\eta_1)\,
      Y^b(\eta_1)\,
      \lf[
      \phi_{m+1}^{r,a}(v_1)\,
      \phi_{\ell+1}^{r,b}(v_1)
      -h_{m+1}^{r,a}(w_1)\,
      h_{\ell+1}^{r,b}(w_1)
      \rt]
      v_1\,dv_1}
    {2\phi_{-1}^{r,0}(v_1)}\rcb_{\eta_1}\\
    &=R_{m,\ell}^{r,a,b}(\eta_1).
  \end{align*}

Similarly, for $\eta_i\notin U$, we have
  \begin{align*}
    &\text{R.H.S of (\ref{eq:R-m-r-a-residue})}
    =\sum_{j=1}^r \Res_{\eta=\a_j}
    \bigg[K_j(\eta,\eta_1)
    \bigg(
    B(\eta,\eta_i)\, d\xi_{m}^{r,a}(\etatil) +
    B(\etatil,\eta_i)\, d\xi_{m}^{r,a}(\eta)
    \bigg)
    \bigg]
    \\
    &=
    \frac{r}{2}\,d\eta_1
    \otimes
    d_{\eta_i}
    \lf[
    \sum_{\a_j}
    \frac{1}{2\pi i}
    \oint_{\gamma_j}
    \lf(
    \frac{1}{y-\eta_1}-
    \frac{1}{\ytilde-\eta_1}
    \rt)
    \lf(
    \frac{\xi_{m+1}^{r,a}(\ytilde)}{y-\eta_i} +
    \frac{\xi_{m+1}^{r,a}(y)\,\ytilde'}{\ytilde-\eta_i} 
    \rt)\,
    \frac{dy}{\ytilde^r-y^r}
    \rt]\\
    &=
    r\,d\eta_1\otimes
    d_{\eta_i}
    \lf[
    \sum_{\a_j}
    \Res_{y=\a_j}
    \frac1{y-\eta_1}
    \lf(
    \frac{\xi_{m+1}^{r,a}(\ytilde)}{(y-\eta_i)(\ytilde^r-y^r)}+
    \frac{\xi_{m+1}^{r,a}(y)\,\ytilde'}{(\ytilde-\eta_i)(\ytilde^r-y^r)}
    \rt)
    \rt]\\
    &=
    -r\,d\eta_1\otimes\,d_{\eta_i}
    \lcb
    \frac{\xi_{m+1}^{r,a}(\etatil_1)}{(\eta_1-\eta_i)(\etatil_1^r-\eta_1^r)}+
    \frac{\xi_{m+1}^{r,a}(\eta_1)\,\etatil_1'}{(\etatil_1-\eta_i)(\etatil_1^r-\eta_1^r)}
    \rcb_{\eta_1}\\
    &=d\eta_1\otimes\,d_{\eta_i}
    \lcb
    \frac{\xi_{m+1}^{r,a}(\etatil_1)}{2\,(\eta_1-\eta_i)\,\phi_{-1}^{r,0}(v_1)}+
    \frac{\xi_{m+1}^{r,a}(\eta_1)\,\etatil_1'}{2\,(\etatil_1-\eta_i)\,\phi_{-1}^{r,0}(v_1)}
    \rcb_{\eta_1},
  \end{align*}
 thanks to Lemma~\ref{lem:residue-calculation}. 
 Here the sign $'$ indicates differentiation 
 with respect to the variable without the 
 $\tilde{\;}$-sign. 
 For example, $\ytilde' =   d\ytilde /dy$, etc.
  The last step is to equate the
  above result with (\ref{eq:R-m-r-a}),
which follows from 
  \begin{lem}
  \begin{displaymath}
  \left\{   
    \lf(
    \frac1{\eta_1-\eta_i}+
    \frac{\etatil_1'}{\etatil_1-\eta_i}
    \rt)
    \lf(
    \frac{\xi_{m+1}^{r,a}(\eta_1)+
      \xi_{m+1}^{r,a}(\etatil_1)}{2\phi_{-1}^{r,0}(v_1)}
    \rt)
    \right\}_{\eta_1} = 0.
  \end{displaymath}
  \end{lem}
  \begin{proof}[Proof of Lemma]
 Note that
  \begin{displaymath}
    \frac{\xi_{m+1}^{r,a}(\eta_1)+
      \xi_{m+1}^{r,a}(\etatil_1)}{2\phi_{-1}^{r,0}(v_1)}
    =-\frac{r\,Y^a(\eta_1)\, h_{m+1}^{r,a}(0)}{\Dt_1} + \cO(\Dt_1),
  \end{displaymath}
 which has a simple pole at each $\a_j$. 
 Since $\etatil_1'\big|_{\eta_1=\a_i}=-1$,  the holomorphic
  function $\frac1{\eta_1-\eta_i}+\frac{\etatil_1'}{\etatil_1-\eta_i}$ has a zero at each $\a_j$. Therefore,
  the principal part operation 
  is applied to  a holomorphic function in $\eta_1$,
  hence the result is $0$.
  \end{proof}
  We have now completed the proof of 
  Theorem~\ref{thm:r-EO}.
\end{proof}

\begin{ack}
The authors thank the American Institute of
Mathematics in Palo Alto, the Banff International 
Research Station in Canada, and the Hausdorff 
Research Institute for 
Mathematics in Bonn for their 
support and hospitality, where this collaboration
was started. They also thank 
 Bertrand Eynard,
  Takashi Kimura,
  Renzo Cavalieri,
  Dustin Ross,
  Sergey Shadrin,
  Loek Spitz,
  and 
 Piotr Su\l kowski
 for useful discussions.

 The research of V.B.\ is
 supported by an NSERC Discovery Grant.
 
 D.H.S.\ received research support from research contract MTM2009-11393 of  Ministerio de Ciencia e Innovaci\'on, Jos\'e Castillejo fellowship of  Ministerio de Educaci\'on, and  Becas de movilidad of JCYL, which allowed him
 to conduct research at the 
 Department of Mathematics, University of 
 California, Davis.

 X.L.\ received the  
 China Scholarship Council 
 grant CSC-2010811063, which allowed him
 to conduct research at the 
 Department of Mathematics, University of 
 California, Davis. 
 He is also supported by the National Science 
 Foundation of China grants No.11201477, 11171175, and 10901090, and the
 Chinese Universities Scientific Fund No.2011JS041.
 
The research of M.M.\ has been 
supported by NSF grants
DMS-1104734 and DMS-1104751,
Max-Planck Institut f\"ur Mathematik in Bonn,
 the Beijing International Center
for Mathematical Research,  University of
Salamanca, Universiteit van Amsterdam, 
and the Kavli Institute for
the Physics and Mathematics of the Universe,
Kashiwa.

\end{ack}


\providecommand{\bysame}{\leavevmode\hbox to3em{\hrulefill}\thinspace}

\bibliographystyle{amsplain}

\end{document}